\documentclass[reqno,11pt]{amsart}

\usepackage[utf8]{inputenx} 
\usepackage[T1]{fontenc}    
\usepackage[margin=3cm]{geometry}

\usepackage{amssymb}   
\usepackage{amsthm}    
\usepackage{thmtools}  
\usepackage{mathtools} 
\usepackage{mathrsfs}  
\usepackage{commath}

\usepackage[all]{xy}
\usepackage{todonotes}
\allowdisplaybreaks

\RequirePackage[backend    = biber,
                sortcites  = true,
                giveninits = true,
                doi        = false,
                isbn       = false,
                url        = false,
                style      = numeric-comp,
                maxbibnames=99]{biblatex}
\DeclareNameAlias{sortname}{family-given}
\DeclareNameAlias{default}{family-given}
\addbibresource{bibl.bib}

\usepackage{varioref}
\usepackage{hyperref}
\urlstyle{sf}
\usepackage[nameinlink, capitalize, noabbrev]{cleveref}

\declaretheorem[style = plain, numberwithin = section]{theorem}
\declaretheorem[style = plain,      sibling = theorem]{corollary}
\declaretheorem[style = plain,      sibling = theorem]{lemma}
\declaretheorem[style = plain,      sibling = theorem]{proposition}

\declaretheorem[style = definition, sibling = theorem]{problem}


\DeclareMathOperator{\vol}{vol}
\DeclareMathOperator{\spn}{span}
\newcommand{\clspn}{\overline{\spn}}

\newcommand{\B}{\mathcal{B}}
\DeclareMathOperator{\Tr}{Tr}

\newcommand{\N}{\mathbb{N}}   
\newcommand{\R}{\mathbb{R}}   
\newcommand{\C}{\mathbb{C}}   
\newcommand{\T}{\mathbb{T}}   


  %

\DeclareMathOperator{\tr}{tr}

\DeclareMathOperator{\id}{id}
\DeclareMathOperator{\vN}{L}
\DeclareMathOperator{\rvN}{R}
\DeclareMathOperator{\cdim}{cdim}
\newcommand{\Hi}{\mathcal{H}}
\newcommand{\Hip}{\mathcal{H}_{\pi}}
\DeclareMathOperator{\Center}{\mathcal{Z}}

\title[The density theorem for projective representations]{The density theorem for projective representations via twisted group von Neumann algebras}
\author{Ulrik Enstad}
\address{Department of Mathematics, University of Oslo, 0851 Oslo, Norway.}
\email{ubenstad@math.uio.no}

\subjclass[2020]{22D10, 22D25, 42C15, 42C40, 46L10.}

\begin{document}

\maketitle

\begin{abstract}
    We consider converses to the density theorem for irreducible, projective, unitary group representations restricted to lattices using the dimension theory of Hilbert modules over twisted group von Neumann algebras. We show that under the right assumptions, the restriction of a $\sigma$-projective unitary representation $\pi$ of a group $G$ to a lattice $\Gamma$ extends to a Hilbert module over the twisted group von Neumann algebra $\vN(\Gamma,\sigma)$. We then compute the center-valued von Neumann dimension of this Hilbert module. For abelian groups with 2-cocycle satisfying Kleppner's condition, we show that the center-valued von Neumann dimension reduces to the scalar value $d_{\pi} \vol(G/\Gamma)$, where $d_{\pi}$ is the formal dimension of $\pi$ and $\vol(G/\Gamma)$ is the covolume of $\Gamma$ in $G$. We apply our results to characterize the existence of multiwindow super frames and Riesz sequences associated to $\pi$ and $\Gamma$. In particular, we characterize when a lattice in the time-frequency plane of a second countable, locally compact abelian group admits a Gabor frame or Gabor Riesz sequence.
\end{abstract}

\section{Introduction}

Let $(\pi, \Hip)$ be a unitary representation of a locally compact group $G$ and let $\Gamma$ be a discrete subset of $G$. The study of the spanning and linear independence properties of $\Gamma$-indexed families of the form
\begin{equation}
     \pi(\Gamma) \eta = ( \pi(\gamma) \eta )_{\gamma \in \Gamma} \label{eq:orbits}
\end{equation}
for $\eta \in \Hip$ is fundamental in many areas of applied harmonic analysis, including time-frequency analysis and wavelet theory \cite{Gr01,HaLa00, Fu05, DaGrMe86, GrMoPa85}. Under certain assumptions on $G$ and $\pi$, fundamental results known as \emph{density theorems} provide basic obstructions to the spanning and linear independence properties of such families depending only on notions of density of $\Gamma$ in $G$ \cite{FuGrHa17,Ku07}.

While there exist many different spanning properties for families in Hilbert spaces, we will focus on frames in the present paper. Unlike e.g.\ Schauder bases, frames provide unconditionally convergent expansions of every element in the Hilbert space, making them ideal for applied harmonic analysis. The dual notion is that of a Riesz sequence, which is a strong notion of linear independence (see \Cref{subsec:frames_riesz_sequences} for definitions).

When $\Gamma$ is a lattice in $G$, i.e.\ a discrete subgroup with finite covolume $\vol(G/\Gamma)$, the uniform density of $\Gamma$ is given by $1/\vol(G/\Gamma)$. In \cite{VaRo00}, Romero and van Velthoven proves the following general density theorem for frames and Riesz sequences of the form \eqref{eq:orbits}:

\begin{theorem}\label{thm:density_theorem}
Let $(\pi,\Hip)$ be an irreducible, square-integrable, unitary representation of a second-countable, unimodular, locally compact group $G$ with formal dimension $d_{\pi}$, and let $\Gamma$ be a lattice in $G$. Then the following hold for $\eta \in \Hip$:
\begin{enumerate}
    \item If $\pi(\Gamma)\eta$ is a frame for $\Hip$, then $d_{\pi} \vol(G/\Gamma) \leq 1$.
    \item If $\pi(\Gamma) \eta$ is a Riesz sequence for $\Hip$, then $d_{\pi} \vol(G/\Gamma) \geq 1$.
\end{enumerate}
\end{theorem}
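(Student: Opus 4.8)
The plan is to realize $\Hip$ as a Hilbert module over the group von Neumann algebra $\vN(\Gamma) = \pi(\Gamma)''$ and to read off the two inequalities from the monotonicity and normalization of the von Neumann dimension. Since $\pi$ is irreducible and square-integrable and $\Gamma$ is a lattice, the restriction $\pi|_\Gamma$ generates a faithful normal copy of $\vN(\Gamma)$ with its canonical finite trace $\tau$, and $\Hip$ becomes a (left) $\vN(\Gamma)$-module whose commutant is again finite; in particular the von Neumann dimension $\dim_{\vN(\Gamma)}(\Hip)$ is well defined. The technical heart is the identity
\[
    \dim_{\vN(\Gamma)}(\Hip) = d_{\pi}\, \vol(G/\Gamma),
\]
which I would establish by comparing the trace $\tau$ on $\vN(\Gamma)$ with integration against a measurable fundamental domain for $\Gamma$ in $G$ and invoking the orthogonality relations for the square-integrable representation $\pi$. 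In the notation of the present paper this is the scalar reduction of the center-valued dimension computed in the body, specialized to the trivial cocycle.

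Next I would translate the frame and Riesz conditions into statements about module maps. For $\eta \in \Hip$, consider the analysis operator $C_{\eta}\colon \Hip \to \ell^2(\Gamma)$, $C_{\eta}\xi = (\hs{\xi}{\pi(\gamma)\eta})_{\gamma \in \Gamma}$, and its adjoint, the synthesis operator $D_{\eta} = C_{\eta}^*$. A direct computation gives $C_{\eta}\,\pi(\gamma') = \lambda(\gamma')\,C_{\eta}$ and $\pi(\gamma')\,D_{\eta} = D_{\eta}\,\lambda(\gamma')$, where $\lambda$ is the left regular representation generating $\vN(\Gamma)$ on the standard module $\ell^2(\Gamma)$, for which $\dim_{\vN(\Gamma)}(\ell^2(\Gamma)) = 1$. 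Thus $C_\eta$ and $D_\eta$ are adjointable maps of $\vN(\Gamma)$-modules.

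For part (1): if $\pi(\Gamma)\eta$ is a frame for $\Hip$, then the frame inequalities say precisely that $C_{\eta}$ is bounded and bounded below, so $C_{\eta}$ is injective with closed range and realizes $\Hip$ as a closed $\vN(\Gamma)$-submodule $\im C_{\eta} \subseteq \ell^2(\Gamma)$, the orthogonal projection onto $\im C_\eta$ lying in the commutant $\vN(\Gamma)'$. By monotonicity of the dimension under submodule inclusion, $\dim_{\vN(\Gamma)}(\Hip) = \dim_{\vN(\Gamma)}(\im C_\eta) \le \dim_{\vN(\Gamma)}(\ell^2(\Gamma)) = 1$, which combined with the identity above gives $d_{\pi}\vol(G/\Gamma) \le 1$. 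For part (2): if $\pi(\Gamma)\eta$ is a Riesz sequence, then the Riesz bounds say that $D_{\eta}$ is bounded and bounded below, hence injective with closed range, and it embeds $\ell^2(\Gamma)$ as a closed $\vN(\Gamma)$-submodule $\im D_\eta \subseteq \Hip$. Monotonicity now yields $1 = \dim_{\vN(\Gamma)}(\ell^2(\Gamma)) = \dim_{\vN(\Gamma)}(\im D_\eta) \le \dim_{\vN(\Gamma)}(\Hip) = d_{\pi}\vol(G/\Gamma)$, the second inequality.

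The main obstacle is the dimension identity $\dim_{\vN(\Gamma)}(\Hip) = d_{\pi}\vol(G/\Gamma)$ together with the preliminary verification that $\Hip$ really is a finite $\vN(\Gamma)$-module: one must check that $\pi|_\Gamma$ generates $\vN(\Gamma)$ faithfully and that the relevant commutant is finite, so that the coupling constant is defined, and then pin the constant to the formal dimension times the covolume. Everything else — the intertwining relations and the passage from frame/Riesz bounds to bounded-below module maps — is routine once the module structure and the normalization $\dim_{\vN(\Gamma)}(\ell^2(\Gamma)) = 1$ are in place.
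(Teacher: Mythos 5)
Your proposal is correct and follows essentially the same route as the paper: both realize $\Hip$ as a Hilbert $\vN(\Gamma)$-module, translate the frame/Riesz hypotheses into $\Gamma$-invariant bounded-below module maps to or from the standard module $\ell^2(\Gamma)$ (the paper does this via the Parseval-frame normalization $S^{-1/2}$ in \Cref{prop:frame_riesz_isometries}, which is the polar decomposition of your $C_\eta$), and conclude by monotonicity of the von Neumann dimension together with the identity $\dim_{\vN(\Gamma)}\Hip = d_\pi\vol(G/\Gamma)$. The dimension identity you defer is exactly what the paper establishes in \Cref{lem:integral_B} by the fundamental-domain-plus-orthogonality-relations computation you sketch.
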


In fact, Romero and van Velthoven prove \Cref{thm:density_theorem} more generally for projective unitary representations \cite[Theorem 7.4]{VaRo00}, that is, maps $\pi$ from $G$ into the unitary operators on $\Hip$ that satisfy a composition rule of the form
\[ \pi(x)\pi(y) = \sigma(x,y) \pi(xy) \;\;\; \text{for all $x,y \in G$}. \]
Here, $\sigma$ is an associated measurable function $G \times G \to \T$ called a 2-cocycle, and ordinary representations correspond to $\sigma=1$.

We will work with projective representations since one of our motivating examples is projective, namely the Weyl--Heisenberg representation associated to a locally compact abelian group $A$. For $A = \R^d$, this is the projective representation of $G = \R^d \times \R^d \cong\ \R^{2d}$ on $L^2(\R^d)$ given by
\[ \pi(x,\omega) \xi(t) = e^{2\pi i \omega t} \xi(t-x) \;\;\; \text{for $(x,\omega) \in \R^d \times \R^d$ and $\xi \in L^2(\R^d)$.} \]
In this context a system of the form $\pi(\Gamma) \eta$ for some lattice $\Gamma$ in $\R^{2d}$ and $\eta \in L^2(\R^d)$ is known as a Gabor system, and the study of their spanning and linear independence properties forms the basis of Gabor analysis. The density theorem has a long history in Gabor analysis, see \cite{He07, Ba90, Da90, RaSt95, JaLe16}. We delay the definition of the Weyl--Heisenberg representation in the setting of a locally compact abelian group until \Cref{subsec:gabor}.

In the present paper we consider converses to the lattice density theorem for projective representations, that is, the following problem:

\begin{problem}\label{problem:1}
Let $(\pi,\Hip)$ be a projective, irreducible, square-integrable, unitary representation of a second-countable, unimodular, locally compact group $G$ with formal dimension $d_{\pi} > 0$, and let $\Gamma$ be a lattice in $G$.
\begin{enumerate}
    \item Does $d_{\pi} \vol(G/\Gamma) \leq 1$ imply the existence of $\eta \in \Hip$ such that $\pi(\Gamma) \eta$ is a frame for $\Hip$?
    \item Does $d_{\pi} \vol(G/\Gamma) \geq 1$ imply the existence of $\eta \in \Hip$ such that $\pi(\Gamma) \eta$ is a Riesz sequence for $\Hip$?
\end{enumerate}
\end{problem}

The above problem has been considered several times in the literature. For Gabor frames it was an open problem for several years with important partial progress made by Daubechies, Grossmann, and Morlet in the one-dimensional case $d=1$ \cite{DaGrMe86} and by Han and Wang for separable lattices in higher dimensions \cite{HaWa01}. Bekka settled the problem in \cite{Be04} where it is proved that the condition $\vol(G/\Gamma) \leq 1$ is sufficient for the existence of a Gabor frame $\pi(\Gamma)\eta$ for arbitrary lattices $\Gamma$ in $\R^{2d}$. We mention that the result can also be inferred from a computation of Rieffel \cite[Theorem 3.5]{Ri88}.

A natural approach to the density theorem and its possible converses goes via the dimension theory for Hilbert modules over von Neumann algebras, as demonstrated by Bekka. In \cite{Be04}, Bekka works in the general setting of a (nonprojective) irreducible, square-integrable, unitary representation $(\pi,\Hip)$ of a second-countable, unimodular, locally compact group $G$. It is shown that the existence of a frame $\pi(\Gamma) \eta$ for some $\eta \in \Hip$ and lattice $\Gamma \subseteq G$ is equivalent to $\cdim_{\vN(\Gamma)} \Hip \leq I$, where $\cdim_{\vN(\Gamma)} \Hip$ is the center-valued von Neumann dimension of $\Hip$ as a Hilbert module over the group von Neumann algebra of $\Gamma$. Thus, in situations where $\cdim_{\vN(\Gamma)} \Hip$ collapses to the scalar operator $d_{\pi} \vol(G/\Gamma) I$, the first part of \Cref{problem:1} has a positive answer. However, this is not the case e.g.\ for representations in the discrete series of $SL(2,\R)$ \cite[Example 1]{Be04}. When $\vN(\Gamma)$ is a factor, i.e.\ when $\Gamma$ is an ICC group, the expression for $\cdim_{\vN(\Gamma)} \Hip$ reduces to the famous formula
\[ \dim_{\vN(\Gamma)} \Hip = d_{\pi} \vol(G/\Gamma) \]
which goes back to the work of Atiyah--Schmid \cite{AtSc77}; see also \cite[Theorem 3.1.1]{GodeJo89}.

In \cite{VaRo00}, Romero and van Velthoven also considers the converse of \Cref{thm:density_theorem} for general projective representations. In particular, they show that when $(\Gamma,\sigma)$ satisfies Kleppner's condition (see \Cref{subsec:cocycles}), then $d_{\pi} \vol(G/\Gamma) \leq 1$ (resp.\ $d_{\pi} \vol(G/\Gamma) \geq 1)$ implies the existence of a frame (resp.\ Riesz sequence) of the form $\pi(\Gamma)\eta$ for some $\eta \in \Hip$. In the present paper we approach their result from the viewpoint of twisted group von Neumann algebras by adapting Bekka's arguments to the setting of projective representations. Thus, the group von Neumann algebra $\vN(\Gamma)$ is replaced by the twisted group von Neumann algebra $\vN(\Gamma,\sigma)$. Twisted group operator algebras were introduced by Zeller-Meier in \cite{ZM68} and have been studied in e.g.\ \cite{Om14,BeCo09,Kl62,Ri88,Pa89,BeOm18}. We state two of our main results in the following theorem, which are adaptions of Bekka's result \cite[Theorem 1]{Be04}:

\begin{theorem}\label{thm:intro1}
Let $(\pi,\Hip)$ be a $\sigma$-projective, irreducible, square-integrable, unitary representation of a second-countable, unimodular, locally compact group $G$ and let $\Gamma$ be a lattice in $G$. Then the representation $\pi|_{\Gamma}$ of $\Gamma$ extends to give $\Hip$ the structure of a Hilbert $\vN(\Gamma,\sigma)$-module. The center-valued von Neumann dimension of $\Hip$ is the (possibly unbounded) operator on $\ell^2(\Gamma)$ given by $\sigma$-twisted convolution $f \mapsto \phi *_{\sigma} f$ with the function $\phi$ on $\Gamma$ defined as follows:
\[ \phi(\gamma) = \begin{cases} \displaystyle  \frac{d_{\pi}}{|C_{\gamma}|} \int_{G/\Gamma_{\gamma}} \overline{\sigma(\gamma,y)}\sigma(y,y^{-1} \gamma y) \langle \eta, \pi(y^{-1} \gamma y) \eta \rangle \dif{(y \Gamma_{\gamma})}   & \text{if $C_{\gamma}$ is $\sigma$-regular and finite,} \\ 0 & \text{otherwise.} \end{cases} \]
Here, $C_{\gamma}$ denotes the conjugacy class of $\gamma$, $\Gamma_{\gamma}$ denotes the centralizer of $\gamma \in \Gamma$, and $\eta$ is any unit vector in $\Hip$.

Moreover, the following hold:
\begin{enumerate}
    \item There exists a frame of the form $\pi(\Gamma) \eta$ for some $\eta \in \Hip$ if and only if $\delta_e - \phi$ is a $\sigma$-positive definite function.
    \item There exists a Riesz sequence of the form $\pi(\Gamma) \eta$ for some $\eta \in \Hip$ if and only if $\phi - \delta_e$ is a $\sigma$-positive definite function.
\end{enumerate}
\end{theorem}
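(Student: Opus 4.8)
The plan is to transpose Bekka's strategy \cite{Be04} from the untwisted to the $\sigma$-twisted setting, replacing $\vN(\Gamma)$ by $\vN(\Gamma,\sigma)$ and tracking the cocycle at each step. Throughout I fix the $\sigma$-twisted left regular representation $\lambda_{\sigma}$ of $\Gamma$ on $\ell^2(\Gamma)$, so that $\vN(\Gamma,\sigma) = \lambda_{\sigma}(\Gamma)''$ carries its canonical faithful normal trace $\tau(a) = \langle a \delta_e, \delta_e \rangle$. For the module structure, the restriction $\pi|_{\Gamma}$ is a $\sigma$-projective unitary representation and hence a $*$-representation of the twisted group algebra $\mathbb{C}[\Gamma,\sigma]$ on $\Hip$; to upgrade this to a normal representation of $\vN(\Gamma,\sigma)$ — which is exactly the datum of a Hilbert $\vN(\Gamma,\sigma)$-module — I would invoke square-integrability. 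The orthogonality relations show that for any unit vector $\eta$ the coefficients $\gamma \mapsto \langle \pi(\gamma)\eta, \xi \rangle$ lie in $\ell^2(\Gamma)$, so that $\pi|_{\Gamma}$ is quasi-contained in $\lambda_{\sigma}$ and the assignment $\lambda_{\sigma}(\gamma) \mapsto \pi(\gamma)$ extends to a normal $*$-homomorphism on the weak-$*$ closure.

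Next comes the dimension computation, which is the crux. I would first recall that, exactly as for untwisted group von Neumann algebras, the center $\Center(\vN(\Gamma,\sigma))$ is carried by the $\sigma$-regular conjugacy classes: a central element corresponds to a function on $\Gamma$ supported on the finite $\sigma$-regular classes and acting by twisted convolution $f \mapsto \phi *_{\sigma} f$. The center-valued von Neumann dimension $\cdim_{\vN(\Gamma,\sigma)} \Hip$ is the element of $\Center(\vN(\Gamma,\sigma))$ representing the center-valued trace of the module, and its coefficients $\phi(\gamma)$ are recovered by pairing against $\lambda_{\sigma}(\gamma)$. Evaluating these, for the trivial class the formal-degree orthogonality relation yields $\phi(e) = d_{\pi}\vol(G/\Gamma)$ directly; for a general $\gamma$ with finite $\sigma$-regular class I would unfold the conjugacy-class sum using the centralizer $\Gamma_{\gamma}$, converting the sum over $y\Gamma_{\gamma} \in \Gamma/\Gamma_{\gamma}$ into the integral $\int_{G/\Gamma_{\gamma}}$ by a Weil-type unfolding. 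Here the formal dimension $d_{\pi}$ and the covolume enter through the orthogonality relations and the choice of Haar measure, the factor $1/|C_{\gamma}|$ records the passage from the class to a single representative, and the cocycle correction $\overline{\sigma(\gamma,y)}\sigma(y,y^{-1}\gamma y)$ arises precisely from conjugating $\pi(\gamma)$ by $\pi(y)$ and collecting the resulting multipliers. One also checks that the integral is independent of the unit vector $\eta$, as it must be since $\cdim$ is an invariant of the module.

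For the two equivalences I would use the monotonicity of the center-valued dimension under submodule inclusion, together with the normalization $\cdim_{\vN(\Gamma,\sigma)} \ell^2(\Gamma) = I$ (the unit of the center, corresponding to $\delta_e$). A frame $\pi(\Gamma)\eta$ exists for some $\eta$ if and only if $\Hip$ embeds as a $\vN(\Gamma,\sigma)$-submodule of $\ell^2(\Gamma)$ via a bounded, bounded-below analysis map, which by the comparison theory for Hilbert modules is equivalent to $\cdim_{\vN(\Gamma,\sigma)} \Hip \leq I$; under the identification of the center with twisted-convolution operators this reads $(\delta_e - \phi) *_{\sigma}(\cdot) \geq 0$, i.e.\ $\sigma$-positive definiteness of $\delta_e - \phi$. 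Dually, a Riesz sequence exists if and only if $\ell^2(\Gamma)$ embeds in $\Hip$, i.e.\ $\cdim_{\vN(\Gamma,\sigma)} \Hip \geq I$, giving $\sigma$-positive definiteness of $\phi - \delta_e$. The forward implications are the module-theoretic refinement of the density theorem (\Cref{thm:density_theorem}), recovered by taking traces.

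The hardest part is the dimension computation of the second paragraph: correctly identifying $\Center(\vN(\Gamma,\sigma))$ with functions on the $\sigma$-regular classes, and then threading the two cocycle factors through the Weil unfolding while simultaneously producing the normalization $d_{\pi}\vol(G/\Gamma)$. Getting the cocycle correction and the formal-dimension–covolume normalization to emerge together from a single orthogonality-relation computation is the delicate technical point.
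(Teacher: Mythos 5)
Your outline follows the same route as the paper: embed $\Hip$ into $L^2(G)$ via the wavelet transform, identify $L^2(G)$ with $\ell^2(\Gamma)\otimes L^2(B)$ as a $\vN(\Gamma,\sigma)$-module, read off $\phi(\gamma)$ from the center-valued trace of the projection, and convert the conjugacy-class sum into an integral over $G/\Gamma_\gamma$ via the centralizer. The frame/Riesz equivalences via comparison of center-valued dimensions against $\cdim\ell^2(\Gamma)=I$ are also exactly the paper's argument (modulo the standard reduction to Parseval frames via $S^{-1/2}$, which you implicitly need so that ``bounded, bounded-below'' becomes ``isometric'').

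There is, however, one step that is genuinely wrong as stated: you claim that the orthogonality relations show that $\gamma\mapsto\langle\pi(\gamma)\eta,\xi\rangle$ lies in $\ell^2(\Gamma)$ for \emph{any} unit vector $\eta$. The orthogonality relations only give $V_\eta\xi\in L^2(G)$, and the restriction of an $L^2$-function to a lattice need not be square-summable; in the Gabor case this is precisely the Bessel-vector issue, and there exist $\eta$ for which $\pi(\Gamma)\eta$ is not Bessel. The correct (and the paper's) route to the normal module structure avoids restriction entirely: $d_\pi^{1/2}V_\eta$ is an isometric intertwiner into $L^2(G)$, and $L^2(G)\cong\ell^2(\Gamma)\otimes L^2(B)$ as $\Gamma$-representations via a fundamental domain, so $\pi|_\Gamma$ is a subrepresentation of an infinite multiple of $\lambda_\sigma^\Gamma$ and the normal extension follows. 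Beyond this, your proposal defers rather than executes the two technically substantive points: (i) the explicit formula for the center-valued trace on $\vN(\Gamma,\sigma)$ (which requires showing that the $\tilde\sigma$-weighted average over a finite $\sigma$-regular class is central, using the cocycle identities $\tilde\sigma(x,yz)=\tilde\sigma(x,y)\tilde\sigma(y^{-1}xy,z)$ and the well-definedness of $\tilde\sigma(x,y)$ on cosets of the centralizer), noting also that the relevant trace is that of the commutant $\rvN(\Gamma,\overline\sigma)$, so the cocycle must be conjugated; and (ii) the absolutely convergent double-sum computation of $\sum_i\tau(\rho_{\overline\sigma}(\gamma)p_{ii})$ over the two orthonormal bases, which is where $d_\pi$ and $\vol(G/\Gamma)$ actually appear. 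These are exactly the ``delicate technical points'' you flag, so the plan is sound, but as written the module-structure justification needs repair and the core computations remain to be done.
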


When $\sigma$ is the trivial 2-cocycle, \cite[Theorem 1]{Be04} is recovered. A special case of \cref{thm:intro1} occurs when $(\Gamma,\sigma)$ satisfies Kleppner's condition. This condition equivalent to the factoriality of $\vN(\Gamma,\sigma)$, in which case the center-valued von Neumann dimension of $\Hip$ reduces to the scalar operator $d_{\pi} \vol(G/\Gamma) I$. Thus, we recover the result of Romero and van Velthoven \cite[Theorem 1.1]{VaRo00}. The scalar-valued von Neumann dimension in the projective setting was also computed for certain representations in \cite{Ra98}.

The description of $\cdim_{\vN(\Gamma,\sigma)} \Hip$ in \Cref{thm:intro1} is obtained along the same lines as in the untwisted case in \cite{Be04} with the necessary modifications needed to incorporate the 2-cocycle $\sigma$; see \Cref{sec:dimension}. An important ingredient in the proof is a description of the center-valued trace for twisted group von Neumann algebras, which does not seem to have appeared in the literature before; see \Cref{thm:cvt}. We also show in \Cref{cor:abelian_cdim} that the situation is particularly simple when $G$ is abelian and the whole group $G$ satisfies Kleppner's condition with respect to $\sigma$: In this case, the center-valued von Neumann dimension collapses to the scalar operator $d_{\pi} \vol(G/\Gamma) I$. Thus we get a complete converse to the density theorem in this setting, which we state here as a theorem:

\begin{theorem}\label{thm:intro2}
Let $(\pi,\Hip)$ be a $\sigma$-projective representation of a locally compact group satisfying the hypotheses of \Cref{thm:intro1}. Assume additionally that $G$ is abelian and that $(G,\sigma)$ satisfies Kleppner's condition. Then for any lattice $\Gamma$ in $G$, we have that
\[ \cdim_{\vN(\Gamma,\sigma)} \Hip = d_{\pi} \vol(G/\Gamma) I . \]
Consequently, the following hold:
\begin{enumerate}
    \item There exists a frame of the form $\pi(\Gamma) \eta$ for some $\eta \in \Hip$ if and only if $d_{\pi} \vol(G/\Gamma) \leq 1$.
    \item There exists a Riesz sequence of the form $\pi(\Gamma) \eta$ for some $\eta \in \Hip$ if and only if $d_{\pi} \vol(G/\Gamma) \geq 1$.
\end{enumerate}
\end{theorem}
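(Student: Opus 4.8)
The plan is to derive \Cref{thm:intro2} directly from the explicit description of the center-valued von Neumann dimension in \Cref{thm:intro1}, by showing that the abelianness of $G$ together with Kleppner's condition on $(G,\sigma)$ forces the function $\phi$ to collapse to a scalar multiple of $\delta_e$. Since $G$ is abelian, so is $\Gamma$, and several features of the formula simplify at once: every conjugacy class $C_{\gamma} = \{\gamma\}$ is a singleton, so $|C_{\gamma}| = 1$ is always finite; the centralizer is $\Gamma_{\gamma} = \Gamma$; and $y^{-1}\gamma y = \gamma$ for all $y$, so that $\pi(y^{-1}\gamma y) = \pi(\gamma)$ comes out of the integral. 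Thus whenever $\gamma$ is $\sigma$-regular (the only case in which $\phi(\gamma)$ can be nonzero), the formula reduces to
\[
\phi(\gamma) = d_{\pi}\,\langle \eta, \pi(\gamma)\eta\rangle \int_{G/\Gamma} \overline{\sigma(\gamma,y)}\,\sigma(y,\gamma)\,\dif{(y\Gamma)}.
\]

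First I would identify the integrand as a character. The cocycle identity shows that in the abelian case the map $b_{\gamma}\colon y \mapsto \overline{\sigma(\gamma,y)}\,\sigma(y,\gamma)$ is a continuous character of $G$, namely the standard antisymmetrization of $\sigma$ (see \Cref{subsec:cocycles}). The key observation is that $b_{\gamma}$ restricts trivially to $\Gamma$ precisely when $\sigma(\gamma,\cdot)$ and $\sigma(\cdot,\gamma)$ agree on $\Gamma$, i.e. exactly when $\gamma$ is $\sigma$-regular \emph{in $\Gamma$}. Hence, in the only case where $\phi(\gamma)$ can be nonzero, $b_{\gamma}$ descends to a character $\overline{b}_{\gamma}$ of the quotient $G/\Gamma$, and the integral above becomes the integral of $\overline{b}_{\gamma}$ over $G/\Gamma$. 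Since $\Gamma$ is a lattice in the abelian, hence unimodular, group $G$, the quotient $G/\Gamma$ carries a finite invariant measure and is therefore compact.

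The main step, and the place where Kleppner's condition enters, is the evaluation of this integral via orthogonality of characters on the compact group $G/\Gamma$: it equals $\vol(G/\Gamma)$ if $\overline{b}_{\gamma}$ is trivial and $0$ otherwise. Now $\overline{b}_{\gamma}$ is trivial on $G/\Gamma$ iff $b_{\gamma}$ is trivial on all of $G$, i.e. iff $\gamma$ is $\sigma$-regular in $G$. This is where the global hypothesis does the work: Kleppner's condition on $(G,\sigma)$ states that $e$ is the only $\sigma$-regular element of $G$, so for every $\gamma \neq e$ that is $\sigma$-regular in $\Gamma$, the character $b_{\gamma}$ is nontrivial on $G$, the descended character $\overline{b}_{\gamma}$ is nontrivial on $G/\Gamma$, and the integral vanishes. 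Combining this with the cases where $\gamma$ is not $\sigma$-regular in $\Gamma$, for which $\phi(\gamma) = 0$ by definition, we conclude $\phi(\gamma) = 0$ for all $\gamma \neq e$. At the identity we have $\sigma(e,y) = \sigma(y,e) = 1$, $\pi(e) = I$, and $\langle \eta, \eta\rangle = 1$, so $\phi(e) = d_{\pi}\vol(G/\Gamma)$. Hence $\phi = d_{\pi}\vol(G/\Gamma)\,\delta_e$, and since $\delta_e$ is the unit for $\sigma$-twisted convolution, the associated operator is the scalar $d_{\pi}\vol(G/\Gamma)\,I$, giving $\cdim_{\vN(\Gamma,\sigma)}\Hip = d_{\pi}\vol(G/\Gamma)\,I$.

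For the two consequences I would feed this back into parts (1) and (2) of \Cref{thm:intro1}. With $\phi = d_{\pi}\vol(G/\Gamma)\,\delta_e$ we get $\delta_e - \phi = (1 - d_{\pi}\vol(G/\Gamma))\,\delta_e$ and $\phi - \delta_e = (d_{\pi}\vol(G/\Gamma) - 1)\,\delta_e$. A real multiple $\lambda\,\delta_e$ corresponds under the twisted left regular representation to the operator $\lambda I$, which is positive if and only if $\lambda \geq 0$; thus $\lambda\,\delta_e$ is $\sigma$-positive definite exactly when $\lambda \geq 0$. Applying this with $\lambda = 1 - d_{\pi}\vol(G/\Gamma)$ yields the frame criterion $d_{\pi}\vol(G/\Gamma) \leq 1$, and with $\lambda = d_{\pi}\vol(G/\Gamma) - 1$ the Riesz-sequence criterion $d_{\pi}\vol(G/\Gamma) \geq 1$. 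The only genuine obstacle in the argument is the bookkeeping around the two notions of $\sigma$-regularity, relative to $\Gamma$ versus relative to $G$, and recognizing that $\sigma$-regularity in $\Gamma$ is exactly the condition needed for $b_{\gamma}$ to descend to $G/\Gamma$, after which character orthogonality together with the global Kleppner hypothesis finishes the computation.
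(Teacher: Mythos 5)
Your proposal is correct and follows essentially the same route as the paper: specialize the formula for $\phi$ to the abelian case, recognize $y\Gamma \mapsto \overline{\sigma(\gamma,y)}\sigma(y,\gamma)$ as a character on $G/\Gamma$ (via \Cref{lem:cocycle_conj}), invoke Kleppner's condition on $(G,\sigma)$ so that this character is nontrivial for $\gamma \neq e$ and the integral vanishes, yielding $\phi = d_{\pi}\vol(G/\Gamma)\,\delta_e$, and then read off the frame and Riesz criteria from the positive-definiteness conditions of \Cref{thm:intro1}. Your extra bookkeeping about $\sigma$-regularity in $\Gamma$ versus in $G$ is a correct and slightly more explicit account of a point the paper leaves implicit.
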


Note the difference between the Kleppner's condition of $(G,\sigma)$ and $(\Gamma,\sigma)$ in \Cref{thm:intro2}. In particular, the above result can be applied immediately to characterize the existence of Gabor frames and Gabor Riesz sequences over arbitrary lattices in the general locally compact abelian setting; see \Cref{thm:gabor_existence}. We remark that the argument given by Bekka in \cite{Be04} to characterize the existence of Gabor frames over lattices in $\R^d \times \R^d$ relies on the fact that Heisenberg group is a nilpotent Lie group and thus cannot be applied to Gabor frames in the general locally compact abelian setting, where no such structure is present.

We apply our results to characterize not only the existence of frames and Riesz sequences of the form $\pi(\Gamma) \eta$, but also more general multiwindow super systems associated to $\pi$. For wavelet and Gabor systems, multiwindow and super systems were introduced and studied systematically by Balan \cite{Ba98, Ba98-2} and Han and Larson \cite{HaLa00}. Density theorems for multiwindow and super systems in the Gabor case and beyond have been considered in \cite{AuJaLu20, BaDuHa20, GrLy09,JaLu18}. See \Cref{subsec:multi_super} for details.

Finally, we mention the related paper \cite{Ha17} which considers density theorems in the more general setting of a projective representation of a countable group, and the recent preprint \cite{AbSp20} which connects affine density with von Neumann dimension.

\subsection{Structure of paper}

The paper is structured as follows: In \Cref{sec:von_neumann} we cover the necessary background on Hilbert modules over finite von Neumann algebras and their scalar-valued and center-valued dimensions. In \Cref{sec:twisted_group} we introduce twisted group von Neumann algebras of discrete groups and describe their canonical center-valued trace. In \Cref{sec:dimension} we compute the center-valued von Neumann dimension of $\Hip$ as a Hilbert $\vN(\Gamma,\sigma)$-module. in \Cref{sec:frame_theory} we apply the results to frame theory, proving density theorems for multiwindow super systems along with converses.

\subsection{Acknowledgements}

The author would like to thank Sven Raum for helpful discussions, and Floris Elzinga and Franz Luef for giving valuable feedback on a draft of the paper.


\section{Hilbert modules over von Neumann algebras}\label{sec:von_neumann}

\subsection{Center-valued traces}

Let $M$ be a finite von Neumann algebra equipped with a faithful normal tracial state $\tau$. Denote by $L^2(M,\tau)$ the Hilbert space obtained from the GNS construction of $(M,\tau)$, and by $\Omega$ its cyclic vector. We will represent $M$ as operators on $L^2(M,\tau)$ unless otherwise stated.

Since $M$ is finite, it has a canonical center-valued trace, i.e.\ a normal bounded linear map $\Tr \colon M \to \Center(M)$ uniquely determined by the following properties:
\begin{enumerate}
    \item $\Tr(ab) = \Tr(ba)$ for all $a,b \in M$.
    \item $\Tr(ba) = b \Tr(a)$ for all $a \in M$ and $b \in \mathcal{Z}(M)$.
    \item $\Tr(a) = a$ for all $a \in \mathcal{Z}(M)$.
    \item $\Tr(a^*a) = 0$ implies $a=0$ for all $a \in M$.
\end{enumerate}
Moreover the center-valued trace and $\tau$ relates in the following way \cite[p.\ 278, Proposition 3]{Di77}:
\begin{equation}
    \tau(ab) = \tau(\Tr(a) b) = \tau(a \Tr(b) ) = \tau(\Tr(ab)) = \tau(\Tr(a)\Tr(b)). \label{eq:cvt_t}
\end{equation}

If $\mathcal{K}$ is a separable, infinite-dimensional Hilbert space with orthonormal basis $(e_i)_{i=1}^\infty$ then the von Neumann algebra $M \otimes \mathcal{B}(\mathcal{K})$ is semifinite. By \cite[p.\ 330, Theorem 2.34]{Ta02} it admits a faithful, semifinite, normal extended center-valued trace $\Phi$ which we can describe as follows: If $a$ is a positive element of $M \otimes \mathcal{B}(\mathcal{K})$ then $a$ can be decomposed into a matrix $(a_{ij})_{i,j=1}^\infty$ with entries in $M$ such that
\begin{equation*}
    \langle a (\xi \otimes e_i), \eta \otimes e_j \rangle = \langle a_{ij} \xi, \eta \rangle \;\;\; \text{for $a \in M \otimes \B(\mathcal{K})$, $\xi,\eta \in L^2(M,\tau)$ and $i,j \in \N$.} \label{eq:tensor_decomp}
\end{equation*}
Then $\Phi$ is defined as
\begin{equation}
    \Phi(a) = \sum_{i} \Tr(a_{ii}) , \label{eq:semi_finite}
\end{equation}
see \cite[p.\ 332]{Be04}. We also obtain a faithful, semifinite, normal extended scalar-valued trace $\tau \otimes \tr$ on $M \otimes \mathcal{B}(\mathcal{K})$ given by
\begin{equation}
    (\tau \otimes \tr)(a) = \sum_i \tau(a_{ii}) . \label{eq:semi_finite_scalar}
\end{equation}
The main property of $\Phi$ that we will need is the following proposition. We call projections $p$ and $q$ called \emph{Murray--von Neumann equivalent}, written $p \sim q$, if there exists a partial isometry $u \in M$ such that $u^*u = p$ and $uu^* = q$. We also write $p \precsim q$ when $p$ is Murray--von Neumann equivalent to a subprojection of $q$.

\begin{proposition}\label{prop:projections_semifinite}
Let $\Phi$ be a normal, faithful, semifinite extended center-valued trace on a von Neumann algebra $M$. Let $p$ and $q$ be projections in $M$. Then the following hold:
\begin{enumerate}
    \item If $p \precsim q$ then $\Phi(p) \leq \Phi(q)$, and if $p \sim q$ then $\Phi(p) = \Phi(q)$.
    \item Suppose that $p$ and $q$ are finite projections. Then $p \precsim q$ if and only if $\Phi(p) \leq \Phi(q)$, and $p \sim q$ if and only if $\Phi(p) = \Phi(q)$.
\end{enumerate}
\end{proposition}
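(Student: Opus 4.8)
The plan is to reduce everything to the comparison theory of projections in semifinite von Neumann algebras, using the extended center-valued trace $\Phi$ as the measuring device. The two parts are structured so that part (1) gives the general monotonicity and invariance of $\Phi$ under the comparison relations $\precsim$ and $\sim$, while part (2) gives the converse under the extra hypothesis that the projections are finite. I expect part (2) to be the genuine content, with part (1) being the relatively formal direction.

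\medskip

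For part (1), I would argue directly from the defining properties of $\Phi$. If $p \sim q$ via a partial isometry $u$ with $u^*u = p$ and $uu^* = q$, then the tracial property of $\Phi$ (the analogue of property (1) in the definition of the center-valued trace, extended to the semifinite setting) gives $\Phi(p) = \Phi(u^*u) = \Phi(uu^*) = \Phi(q)$; here I would either invoke the cyclicity $\Phi(ab)=\Phi(ba)$ on suitable elements or, more carefully, note that $\Phi(u^* u)=\Phi(u u^*)$ holds for partial isometries as a consequence of normality and the trace identity applied to the increasing net approximating $u$. For the subequivalence statement, if $p \precsim q$ then $p \sim q_0$ for some subprojection $q_0 \le q$; combining the equality $\Phi(p)=\Phi(q_0)$ just established with positivity and additivity of $\Phi$ on the orthogonal decomposition $q = q_0 + (q - q_0)$ yields $\Phi(p) = \Phi(q_0) \le \Phi(q_0) + \Phi(q-q_0) = \Phi(q)$, where I use that $\Phi$ takes positive values on positive elements.

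\medskip

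For part (2), the key structural input is the comparison theorem for projections in a von Neumann algebra: for any two projections $p$ and $q$ there is a central projection $z$ such that $zp \precsim zq$ and $(1-z)q \precsim (1-z)p$. Assume $\Phi(p) \le \Phi(q)$ with $p,q$ finite. Applying the comparison theorem, I would cut by the central projection $z$: on the corner $(1-z)M(1-z)$ we have $(1-z)q \precsim (1-z)p$, so part (1) gives $(1-z)\Phi(q) \le (1-z)\Phi(p)$; combined with the hypothesis $(1-z)\Phi(p)\le (1-z)\Phi(q)$ this forces $(1-z)\Phi(p) = (1-z)\Phi(q)$. Here finiteness enters crucially: for finite projections, equality of the faithful center-valued trace $\Phi$ on the subequivalent pair upgrades subequivalence to equivalence, so $(1-z)p \sim (1-z)q$, and in particular $(1-z)p \precsim (1-z)q$. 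Since also $zp \precsim zq$, adding the two central pieces gives $p = zp + (1-z)p \precsim zq + (1-z)q = q$, which is the desired conclusion. The equivalence statement then follows by symmetry: $\Phi(p)=\Phi(q)$ gives both $p \precsim q$ and $q \precsim p$, and the Cantor--Bernstein theorem for von Neumann algebras yields $p \sim q$.

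\medskip

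The hard part is the upgrade from subequivalence to equivalence for finite projections under equality of $\Phi$. The clean way to see it is to pass to the corner $qMq$, which is again semifinite, and to observe that the difference projection $q - q_0$ (where $p \sim q_0 \le q$) must be the zero projection: faithfulness of $\Phi$ together with the additivity computation $\Phi(q) = \Phi(q_0) + \Phi(q - q_0) = \Phi(p) + \Phi(q-q_0)$ forces $\Phi(q-q_0) = 0$, whence $q - q_0 = 0$ by faithfulness. Finiteness of $q$ is exactly what guarantees that $\Phi(q-q_0)$ is a well-defined finite central element on which faithfulness can be applied, and it prevents the usual pathology where a proper subprojection can be equivalent to the whole (as in a properly infinite projection). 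This is the step where I would be most careful to invoke finiteness explicitly rather than treating it as automatic.
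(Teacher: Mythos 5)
Your proof is correct and follows essentially the same route as the paper's: part (1) from the tracial property and positivity, and part (2) via the comparison theorem for projections combined with the key observation that, for finite projections, subequivalence together with equality of $\Phi$ forces equivalence, using faithfulness after cancelling the a.e.\ finite trace value. The only cosmetic differences are that you close the $\sim$ case with Schr\"oder--Bernstein for projections where the paper cites Takesaki directly, and that the precise role of finiteness is to license cancelling $\Phi(q_0)=\Phi(p)$ from both sides of $\Phi(q)=\Phi(p)+\Phi(q-q_0)$ (which fails where the trace takes the value $+\infty$), rather than to make $\Phi(q-q_0)$ well defined.
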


\begin{proof}
The proof is essentially in \cite[Proposition 2]{Be04}, but we give it here for completeness. Part (1) goes as follows: If $p \precsim q$ then we can find a partial isometry $u \in M$ such that $p = u^*u$ and $uu^* \leq q$. By positivity and the tracial property of $\Phi$ this implies $\Phi(p) \leq \Phi(q)$. The implication from $p \sim q$ to $\Phi(p)=\Phi(q)$ now follows from anti-symmetry.

Before we prove part (2), we make the following observation: If $p',q'$ are finite projections in $M$ with $p' \precsim q'$ and $\Phi(p') = \Phi(q')$ then $p' \leq q'$. Indeed, letting $p' \sim p'' \leq q'$ then
\[ \Phi(p') = \Phi(q') = \Phi(p'') + \Phi(q'-p'') = \Phi(p') + \Phi(q'-p'') .\]
Since $p'$ is finite, \cite[p.\ 331, Proposition 2.35]{Ta02} implies that $\Phi(p')$ is finite almost everywhere as a function on the measure space $(X,\mu)$ such that $\mathcal{Z}(M) \cong L^\infty(X,\mu)$. Hence we may cancel $\Phi(p')$ in the above equation. The faithfulness of $\Phi$ now implies $q' = p''$, hence $p' \leq q'$.

We now prove (2). Suppose $p$ and $q$ are finite and that $\Phi(p) \leq \Phi(q)$. By \cite[p.\ 293, Theorem 1.8]{Ta02} we can find a projection $z$ in the center of $M$ such that $z p \precsim z q$ and $(1-z) q \precsim (1-z) p$. By part (1) we get $\Phi((1-z) q) \leq \Phi((1-z)p)$. However we also have that
\[ \Phi((1-z)p) = (1-z) \Phi(p) \leq (1-z) \Phi(q) = \Phi((1-z)q) . \]
By our observation applied to $p' = (1-z)q$ and $q' = (1-z)p$ we can conclude that $(1-z) q \leq (1-z) p$. Hence
\[ p = zp + (1-z)p \leq zp + (1-z)q \precsim zq + (1-z)q = q. \]
The implication from $\Phi(p) = \Phi(q)$ to $p \sim q$ follows from \cite[p.\ 291, Proposition 1.3]{Ta02}.
\end{proof}

\subsection{Hilbert modules}

We continue to assume that $M$ is a finite von Neumann algebra equipped with a faithful normal tracial state $\tau$. A \emph{(left) Hilbert $M$-module} is a Hilbert space $\Hi$ together with a unital normal $*$-homomorphism $\pi \colon M \to \mathcal{B}(H)$. We write $a \xi = \pi(a) \xi$ for $a \in M$ and $\xi \in \Hi$. We often write $_M \Hi$ when we want to emphasize that $\Hi$ is a Hilbert module over $M$. A Hilbert module $\Hi$ is called \emph{separable} if $\Hi$ is separable as a Hilbert space. Two Hilbert $M$-modules are \emph{isomorphic} if there exists an $M$-linear unitary operator between them.

Fixing a separable infinite-dimensional Hilbert space $\mathcal{K}$, we can turn the tensor product $L^2(M,\tau) \otimes \mathcal{K}$ into a Hilbert $M$-module via
\[ a (\xi \otimes \eta) = (a \xi) \otimes \eta \;\;\; \text{for $a \in M$, $\xi \in L^2(M,\tau)$ and $\eta \in \mathcal{K}$. } \]
This is nothing but the countable direct sum $\bigoplus_{j=1}^{\infty} L^2(M,\tau)$ where $M$ acts diagonally. One of the basic facts about Hilbert modules over $M$ is that under certain separability assumptions they can all be realized as submodules of this direct sum \cite[Proposition 2.1.2]{JoSu97}:

\begin{proposition}\label{prop:big_module}
Let $M$ be a finite von Neumann algebra which has separable predual, let $\tau$ be a faithful normal tracial state $\tau$ on $M$ and let $\mathcal{K}$ be a separable infinite-dimensional Hilbert space. Then every separable Hilbert $M$-module is isomorphic to a submodule of $L^2(M,\tau) \otimes \mathcal{K}$.
\end{proposition}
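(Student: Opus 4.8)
The goal is \Cref{prop:big_module}: every separable Hilbert $M$-module embeds as a submodule of $L^2(M,\tau) \otimes \mathcal{K}$, where $M$ has separable predual and $\mathcal{K}$ is separable infinite-dimensional. The plan is to exhibit the universal module $L^2(M,\tau) \otimes \mathcal{K} \cong \bigoplus_{j=1}^\infty L^2(M,\tau)$ as containing a copy of every cyclic module, and then to decompose an arbitrary separable module into a countable direct sum of cyclic submodules.

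First I would reduce to the cyclic case. Given a separable Hilbert $M$-module $(\Hi, \rho)$, I would use separability to extract a countable subset $(\xi_n)_{n=1}^\infty$ of $\Hi$ whose $M$-orbits $M \xi_n = \overline{\{\rho(a)\xi_n : a \in M\}}$ together have dense span. A standard Gram--Schmidt/orthogonalization over the commutant then produces a countable family of mutually orthogonal cyclic submodules whose orthogonal direct sum is all of $\Hi$; concretely one replaces $\xi_{n+1}$ by its component orthogonal to the closed span of the preceding cyclic modules, using that the orthogonal projection onto an $M$-submodule lies in the commutant $M'$ and hence commutes with the $M$-action, so the complement is again an $M$-submodule. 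This writes $\Hi \cong \bigoplus_{n} \Hi_n$ with each $\Hi_n$ cyclic.

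Next I would embed a single cyclic module $\overline{M\xi}$ into $L^2(M,\tau)$, or rather into a finite amplification of it. The key tool is the structure of cyclic representations of a finite von Neumann algebra: the vector state $a \mapsto \langle \rho(a)\xi, \xi\rangle$ is a normal positive functional on $M$, and normality together with finiteness lets me dominate it by a multiple of $\tau$ after cutting by a central/commutant projection, so that the GNS representation of this state embeds $M$-equivariantly into a subspace of $L^2(M,\tau)$ (possibly after passing to a bounded amplification when the Radon--Nikodym derivative is unbounded). Assembling the countably many cyclic pieces and placing the $n$th piece in the $n$th summand of $\bigoplus_{j=1}^\infty L^2(M,\tau) = L^2(M,\tau)\otimes\mathcal{K}$ gives the desired $M$-linear isometric embedding onto a submodule.

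The main obstacle is the cyclic embedding step: controlling the normal state $\langle \rho(\cdot)\xi,\xi\rangle$ relative to $\tau$. Since $\tau$ is merely a faithful normal tracial state and not necessarily dominating, the comparison must be handled via the comparison theory of projections in $M$ (or equivalently via the coupling/dimension theory), showing that any cyclic $M$-module is equivalent to $L^2(M,\tau) e$ for some projection $e$ in the commutant of $M$ acting on $L^2(M,\tau)\otimes\mathcal{K}$, hence embeds into $L^2(M,\tau)\otimes\mathcal{K}$. Because the cited reference \cite[Proposition 2.1.2]{JoSu97} supplies exactly this statement, I would invoke the comparison theorem for finite von Neumann algebras to pin down each cyclic summand as such a corner and thereby complete the embedding.
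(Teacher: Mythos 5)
Your argument is essentially the standard proof of this fact, which the paper itself does not reprove but simply delegates to \cite[Proposition 2.1.2]{JoSu97}: decompose a separable module into a countable orthogonal sum of cyclic submodules (using that projections onto $M$-submodules lie in the commutant), identify each cyclic piece with a corner $e'L^2(M,\tau)$ for a projection $e'\in M'$, and stack the pieces into $\bigoplus_{n}L^2(M,\tau)\cong L^2(M,\tau)\otimes\mathcal{K}$. The one step you describe imprecisely is the cyclic embedding: there is no need to dominate the vector state $\phi=\langle\,\cdot\,\xi,\xi\rangle$ by a multiple of $\tau$, to cut by central projections, or to amplify when a Radon--Nikodym derivative is unbounded. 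By the Radon--Nikodym theorem for the trace one has $\phi=\tau(\,\cdot\,h)$ with $h\in L^1(M,\tau)_+$, and then $\phi(a^*a)=\tau(h^{1/2}a^*ah^{1/2})=\|ah^{1/2}\|_{L^2(M,\tau)}^2$ by traciality, so $a\xi\mapsto ah^{1/2}$ extends to an $M$-linear unitary from the cyclic module onto $\overline{Mh^{1/2}}\subseteq L^2(M,\tau)$; each cyclic summand therefore lands in a single copy of $L^2(M,\tau)$, and this is also where finiteness (traciality) genuinely enters. With that correction your outline is complete and matches the cited source.
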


Now let $M'$ be the commutant of $M$ in $\mathcal{B}(L^2(M,\tau))$ which is also finite. The trace $\tau$ extends to a state on $\B(L^2(M,\tau))$ via $a \mapsto \langle a \Omega, \Omega \rangle$ for $a \in \B(L^2(M,\tau))$ and the restriction to $M'$ is also a faithful normal tracial state. Let $\Hi$ be a separable Hilbert $M$-module, which we embed into $L^2(M,\tau) \otimes \mathcal{K}$ as in \Cref{prop:big_module}. Denote by $p$ the projection of $L^2(M,\tau) \otimes \mathcal{K}$ onto $\Hi$. Then this projection is in the commutant of $M$ on the Hilbert space $L^2(M,\tau) \otimes \mathcal{K}$. This commutant equals $M' \otimes \mathcal{B}(\mathcal{K})$. The latter is a semifinite von Neumann algebra with center-valued trace $\Phi$ as defined in \eqref{eq:semi_finite}. We define the \emph{center-valued von Neumann dimension} of $\Hi$ to be
\begin{equation}
    \cdim_M \Hi = \Phi(p) = \sum_i \Tr(p_{ii}) \label{eq:cdim}
\end{equation}
where $\Tr$ denotes the canonical center-valued trace on $M'$. It follows from part (1) of \Cref{prop:projections_semifinite} that this definition is independent of the chosen projection. Moreover we define the scalar-valued von Neumann dimension of $\Hi$ to be
\begin{equation}
    \dim_M \Hi = (\tau \otimes \tr)(p) = \sum_i \tau(p_{ii}) \label{eq:dim} .
\end{equation}
Note that these two notions of dimension coincide precisely when $M$ is a factor.

\begin{proposition}\label{prop:cdim_iso}
Let $M$ be a finite von Neumann algebra with separable predual equipped with a faithful normal tracial state $\tau$, and let $\Hi$ and $\Hi'$ be separable Hilbert $M$-modules.
\begin{enumerate}
    \item If $\Hi$ is isomorphic to a submodule of $\Hi'$ then $\cdim_M \Hi \leq \cdim_M \Hi'$, and if $\Hi \cong \Hi'$ then $\cdim_M \Hi = \cdim_M \Hi'$.
    \item Suppose that $\dim_M \Hi, \dim_M \Hi' < \infty$. Then $\Hi$ is isomorphic to a submodule of $\Hi'$ if and only if $\cdim_M \Hi \leq \cdim_M \Hi'$, and $\Hi \cong \Hi'$ if and only if $\cdim_M \Hi = \cdim_M \Hi'$.
    \item $\cdim_M (\Hi \oplus \Hi') = \cdim_M \Hi + \cdim_M \Hi'$.
\end{enumerate}
\end{proposition}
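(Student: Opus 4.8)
The plan is to translate every statement about Hilbert $M$-modules into a statement about projections in the semifinite commutant $N := M' \otimes \mathcal{B}(\mathcal{K})$, and then invoke \Cref{prop:projections_semifinite}. First I would fix, using \Cref{prop:big_module}, embeddings of $\Hi$ and $\Hi'$ as submodules of $L^2(M,\tau) \otimes \mathcal{K}$, with corresponding projections $p$ and $p'$ lying in $N$. The central dictionary I would set up is this: bounded $M$-linear maps between the two submodules correspond exactly to elements of $p' N p$, because an operator on $L^2(M,\tau) \otimes \mathcal{K}$ is $M$-linear precisely when it lies in $(M \otimes 1)' = N$. Consequently an $M$-linear isometry $\Hi \to \Hi'$ extends (by zero on $\Hi^{\perp}$) to a partial isometry $u \in N$ with $u^*u = p$ and $uu^* \le p'$, and conversely; likewise an $M$-linear unitary $\Hi \to \Hi'$ corresponds to a partial isometry with $u^*u = p$ and $uu^* = p'$. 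Since every projection $q \le p'$ in $N$ is the projection onto an $M$-submodule of $\Hi'$ (its range is $M$-invariant because $q$ commutes with $M$, and is contained in $\Hi'$), I conclude that $\Hi$ is isomorphic to a submodule of $\Hi'$ if and only if $p \precsim p'$, and $\Hi \cong \Hi'$ if and only if $p \sim p'$.

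With this dictionary, part (1) is immediate: $p \precsim p'$ gives $\Phi(p) \le \Phi(p')$ and $p \sim p'$ gives $\Phi(p) = \Phi(p')$ by \Cref{prop:projections_semifinite}(1), which by \eqref{eq:cdim} is exactly the asserted comparison of $\cdim_M$ (this also re-derives the well-definedness of $\cdim_M$ noted above). For part (2) I would first observe that the finiteness hypothesis on the scalar dimension forces $p$ and $p'$ to be finite projections in $N$: if $p$ were infinite, then $p \sim p_0 \lneq p$, and faithfulness together with the trace property of $\tau \otimes \tr$ would give $(\tau \otimes \tr)(p) = (\tau \otimes \tr)(p_0) < (\tau \otimes \tr)(p)$, contradicting $\dim_M \Hi = (\tau \otimes \tr)(p) < \infty$. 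Once $p,p'$ are known to be finite, \Cref{prop:projections_semifinite}(2) upgrades the implications of part (1) to equivalences, and the dictionary converts these back into the module statements.

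For part (3) I would exploit that $\mathcal{K}$ is infinite-dimensional, so I may fix a decomposition $\mathcal{K} = \mathcal{K}_1 \oplus \mathcal{K}_2$ into two infinite-dimensional subspaces and correspondingly split $L^2(M,\tau) \otimes \mathcal{K} = (L^2(M,\tau) \otimes \mathcal{K}_1) \oplus (L^2(M,\tau) \otimes \mathcal{K}_2)$. Embedding $\Hi$ into the first summand and $\Hi'$ into the second yields orthogonal projections $\tilde p, \tilde p' \in N$ whose ranges are $M$-submodules isomorphic to $\Hi$ and $\Hi'$, so that $\tilde p + \tilde p'$ is the projection onto a copy of $\Hi \oplus \Hi'$. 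Choosing the orthonormal basis of $\mathcal{K}$ to refine those of $\mathcal{K}_1$ and $\mathcal{K}_2$ makes the defining formula \eqref{eq:semi_finite} for $\Phi$ agree with the corresponding formulas on the two corners, so that $\Phi(\tilde p) = \cdim_M \Hi$ and $\Phi(\tilde p') = \cdim_M \Hi'$ by part (1). Additivity of the extended center-valued trace $\Phi$ on the orthogonal sum then gives $\cdim_M(\Hi \oplus \Hi') = \Phi(\tilde p + \tilde p') = \Phi(\tilde p) + \Phi(\tilde p')$, which is the claim; this remains valid when the dimensions are unbounded, since $\Phi$ is additive in the extended positive cone of $\Center(N)$.

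The main obstacle I anticipate is not any single computation but the careful verification of the module-to-projection dictionary, in particular that an abstract $M$-linear (partial) isometry between two concretely embedded submodules really does extend to a partial isometry in the commutant $N$ with the expected initial and final projections, and that subequivalence $p \precsim p'$ corresponds precisely to ``isomorphic to a submodule'' rather than merely to the existence of some $M$-linear contraction. Everything else reduces to bookkeeping around \Cref{prop:projections_semifinite} and the additivity and faithfulness of the traces $\Phi$ and $\tau \otimes \tr$.
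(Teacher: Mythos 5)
Your proposal is correct and follows essentially the same route as the paper: translate module (sub)isomorphisms into Murray--von Neumann (sub)equivalence of the corresponding projections in $M' \otimes \mathcal{B}(\mathcal{K})$, note that finite scalar dimension forces the projections to be finite, apply \Cref{prop:projections_semifinite}, and get (3) from additivity of $\Phi$. The paper's proof is just a compressed version of this, leaving the module-to-projection dictionary and the orthogonal-embedding argument for (3) implicit, both of which you verify correctly.
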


\begin{proof}
Let $\cdim_M \Hi = \Phi(p)$ for a projection $p \in M' \otimes \mathcal{B}(\mathcal{K})$. Since $\tau \otimes \id$ is a faithful, semifinite, normal extended scalar-valued trace on $M \otimes \mathcal{B}(\mathcal{K})$, $(\tau \otimes \id)(p) < \infty$ implies that $p$ is a finite projection. Thus, since $\dim_M \Hi = (\tau \otimes \id)(p)$ both (1) and (2) follow directly from \Cref{prop:projections_semifinite}. (3) follows from the additivity of $\Phi$.
\end{proof}

\section{Twisted group von Neumann algebras}\label{sec:twisted_group}

\subsection{2-cocycles and projective representations}\label{subsec:cocycles}

In this section $G$ denotes a locally compact group with identity $e$. A \emph{2-cocycle} on $G$ is a Borel measurable function $\sigma \colon G \times G \to \T$ that satisfies the following properties:
\begin{align}
    \sigma(x, y) \sigma(xy, z) &= \sigma(x, yz) \sigma(y, z) \;\;\; \text{for $x,y,z \in G$,} \label{eq:cocycle1} \\
    \sigma(e,e) &= 1.
\end{align}
If $\sigma$ is a 2-cocycle then the pointwise conjugate $\overline{\sigma}$ is also a 2-cocycle.

An element $x \in G$ is called \emph{$\sigma$-regular} if $\sigma(x,y) = \sigma(y,x)$ whenever $y$ commutes with $x$. If $x$ is $\sigma$-regular then every element in the conjugacy class $C_{x}$ of $x$ is $\sigma$-regular, hence it makes sense to talk about $\sigma$-regular conjugacy classes. We say that $(G,\sigma)$ satisfies \emph{Kleppner's condition} if the only $\sigma$-regular finite conjugacy class is the trivial one.

A \emph{$\sigma$-projective unitary representation} of $G$ on a Hilbert space $\mathcal{H}$ is a map $\pi \colon G \to \mathcal{U}(\mathcal{H})$ that satisfies the following property:
\begin{equation*}
    \pi(x)\pi(y) = \sigma(x,y) \pi(xy) \;\;\; \text{for all $x,y \in G$.}
\end{equation*}
We also require that $\pi$ is \emph{strongly continuous}: That is, the map $G \to \mathcal{H}$ given by $x \mapsto \pi(x) \xi$ is continuous for every $\xi \in \mathcal{H}$.

Every 2-cocycle $\sigma$ on $G$ comes with two natural $\sigma$-projective representations: The \emph{$\sigma$-projective left regular representation of $G$} is the representation $\lambda_{\sigma}$ of $G$ on $L^2(G)$ given by
\begin{equation*}
    \lambda_{\sigma}(x)f(y) = \sigma(x,x^{-1}y)f(x^{-1}y) \;\;\; \text{for $x,y \in G$ and $f \in L^2(G)$,}
\end{equation*}
while the \emph{$\sigma$-projective right regular representation of $G$} is the representation $\rho_{\sigma}$ of $G$ on $L^2(G)$ given by
\begin{equation*}
    \rho_{\sigma}(x)f(y) = \sigma(y,x) f(yx) \;\;\; \text{for $x,y \in G$ and $f \in L^2(G)$.} 
\end{equation*}

Using the 2-cocycle identity \eqref{eq:cocycle1} one shows that the following conjugation identity holds for projective representations:
\begin{equation}
    \pi(y)^*\pi(x)\pi(y) = \sigma(x,y) \overline{ \sigma(y,y^{-1} x y) } \pi(y^{-1} x y) \;\;\; \text{for all $x,y \in G$.} \label{eq:projective_conjugation}
\end{equation}
The associated function $\tilde{\sigma} \colon G \times G \to \T$ given by
\begin{equation}
    \tilde{\sigma}(x,y) = \sigma(x,y) \overline{ \sigma(y,y^{-1} x y) } \;\;\; \text{for $x,y \in G$,} \label{eq:tilde_cocycle}
\end{equation}
has some important properties that form the basis for later 2-cocycle computations:

\begin{lemma}\label{lem:cocycle_conj}
Let $\sigma$ be a 2-cocycle on a locally compact group $G$. The following hold for all $x,y,z \in G$ where $\tilde{\sigma}$ is as defined in \eqref{eq:tilde_cocycle}:
\begin{align}
    \tilde{\sigma}(x,yz) &= \tilde{\sigma}(x,y) \tilde{\sigma}(y^{-1}xy, z) , \label{eq:tilde1} \\
    \tilde{\sigma}(x,y^{-1}) &= \overline{ \tilde{\sigma}(yxy^{-1}, y) } , \label{eq:tilde2} \\
    \tilde{\sigma}(x,y) &= \tilde{\sigma}(x,y') \;\;\; \text{if $x$ is $\sigma$-regular and $y^{-1}xy = y'^{-1}xy'$} \label{eq:tilde3}
\end{align}
\end{lemma}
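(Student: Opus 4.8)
The plan is to prove the three identities by direct computation from the definition $\tilde\sigma(x,y) = \sigma(x,y)\overline{\sigma(y,y^{-1}xy)}$, feeding in the basic 2-cocycle relation \eqref{eq:cocycle1} as needed. Identity \eqref{eq:tilde1} is the workhorse and should be established first, since the other two are likely to follow from it together with \eqref{eq:cocycle1}. For \eqref{eq:tilde1} I would write out
\[
\tilde\sigma(x,yz) = \sigma(x,yz)\,\overline{\sigma(yz,(yz)^{-1}x(yz))}
= \sigma(x,yz)\,\overline{\sigma(yz, z^{-1}y^{-1}xyz)},
\]
and compare it against the product $\tilde\sigma(x,y)\tilde\sigma(y^{-1}xy,z)$. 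The guiding idea is that the conjugation identity \eqref{eq:projective_conjugation} lifts to the cocycle level: conjugating first by $y$ and then by $z$ should produce the same phase as conjugating in one step by $yz$, up to the cocycle factors. So the most transparent route may be to avoid brute manipulation and instead read the identity off from \eqref{eq:projective_conjugation} applied twice. Concretely, $\pi(yz)^*\pi(x)\pi(yz) = \pi(z)^*\big(\pi(y)^*\pi(x)\pi(y)\big)\pi(z)$, and expanding the left side via \eqref{eq:projective_conjugation} for the element $yz$ and the right side via two successive applications for $y$ then $z$, the operator $\pi(z^{-1}y^{-1}xyz)$ appears on both sides; equating the accompanying scalar phases yields \eqref{eq:tilde1}. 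This is cleaner than a bare cocycle computation because the group law bookkeeping is carried by the representation.

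For \eqref{eq:tilde2}, I would specialize. Setting $z = y^{-1}$ in \eqref{eq:tilde1} (with the roles of the variables chosen appropriately) forces the appearance of $\tilde\sigma(x,e)$, which one computes directly to be $1$ from $\sigma(e,e)=1$ and the normalization consequences of \eqref{eq:cocycle1}; the surviving factors then rearrange into the claimed relation $\tilde\sigma(x,y^{-1}) = \overline{\tilde\sigma(yxy^{-1},y)}$. Here the conjugate arises naturally because inverting $y$ swaps the two arguments of the conjugation, and \eqref{eq:tilde1} encodes exactly how the phase transforms under such a swap. I would double-check the normalization $\tilde\sigma(x,e)=1$ as a separate one-line lemma-within-the-proof, since everything downstream depends on it.

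For \eqref{eq:tilde3}, the hypothesis that $x$ is $\sigma$-regular and that $y^{-1}xy = y'^{-1}xy'$ means $yy'^{-1}$ (or $y'y^{-1}$) commutes with the conjugate of $x$, so $\sigma$-regularity gives a commuting-pair symmetry of $\sigma$ that collapses the discrepancy between $\tilde\sigma(x,y)$ and $\tilde\sigma(x,y')$. The plan is to write $y' = y\cdot t$ where $t = y^{-1}y'$ satisfies $t^{-1}(y^{-1}xy)t = y^{-1}xy$, i.e.\ $t$ commutes with $y^{-1}xy$, then apply \eqref{eq:tilde1} to split $\tilde\sigma(x,y')=\tilde\sigma(x,yt) = \tilde\sigma(x,y)\tilde\sigma(y^{-1}xy,t)$ and show the extra factor $\tilde\sigma(y^{-1}xy,t)$ equals $1$. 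Because $y^{-1}xy$ is $\sigma$-regular (regularity is a conjugacy-class property, as noted just before the lemma) and $t$ commutes with it, the definition of $\tilde\sigma$ together with the $\sigma$-regularity symmetry $\sigma(w,t)=\sigma(t,w)$ for commuting $w,t$ should force $\tilde\sigma(w,t)=1$. I expect this vanishing-of-the-extra-factor step to be the main obstacle: one must verify carefully that $\sigma$-regularity yields precisely the phase cancellation $\sigma(w,t)\overline{\sigma(t,t^{-1}wt)} = \sigma(w,t)\overline{\sigma(t,w)} = 1$ when $t$ commutes with $w$, and that regularity of $x$ transfers to $w = y^{-1}xy$. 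Once \eqref{eq:tilde1} is in hand, \eqref{eq:tilde2} and \eqref{eq:tilde3} are each short; the bulk of the care lies in the cocycle accounting for \eqref{eq:tilde1} and in the regularity argument for \eqref{eq:tilde3}.
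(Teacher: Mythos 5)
Your proposal is correct, and for \eqref{eq:tilde2} it coincides with the paper's proof (decompose $e=y^{-1}y$ via \eqref{eq:tilde1} and use $\tilde{\sigma}(x,e)=1$). For \eqref{eq:tilde1} you take a genuinely different route: the paper performs a direct seven-line manipulation with the 2-cocycle identity \eqref{eq:cocycle1}, whereas you read the identity off from \eqref{eq:projective_conjugation} applied to $\pi(yz)^*\pi(x)\pi(yz)=\pi(z)^*\bigl(\pi(y)^*\pi(x)\pi(y)\bigr)\pi(z)$ and cancel the unitary $\pi(z^{-1}y^{-1}xyz)$. This is cleaner in that the group-law bookkeeping is absorbed by the representation, but note two small points: it presupposes \eqref{eq:projective_conjugation}, whose verification is itself a cocycle computation of comparable weight (the paper asserts it without proof just before the lemma, so this is fair game), and it requires the existence of \emph{some} $\sigma$-projective representation to instantiate $\pi$ --- the twisted left regular representation $\lambda_{\sigma}$ serves, and you should say so explicitly, since otherwise the argument proves an identity about operators rather than about $\sigma$. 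For \eqref{eq:tilde3} your decomposition $y'=yt$ with $t=y^{-1}y'$ commuting with $w=y^{-1}xy$ is a mild variant of the paper's, which instead observes that $y'y^{-1}$ commutes with $x$ and combines \eqref{eq:tilde1} with \eqref{eq:tilde2}; your version has the small advantage of needing only \eqref{eq:tilde1}, and your cancellation $\tilde{\sigma}(w,t)=\sigma(w,t)\overline{\sigma(t,w)}=1$ is exactly right, provided you cite (as you do) that $\sigma$-regularity passes to conjugates, which the paper records just before the lemma. No gaps; the remaining work is only the routine verification of $\tilde{\sigma}(x,e)=1$ from the normalization of $\sigma$, which you correctly flag.
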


\begin{proof}
Using the 2-cocycle identity \eqref{eq:cocycle1} repeatedly, we obtain
\begin{align*}
    &\tilde{\sigma}(x,y) \tilde{\sigma}(y^{-1}xy,z) \\
    &= \sigma(x, y) \overline{ \sigma( y, y^{-1}xy) } \sigma( y^{-1}xy, z) \overline{ \sigma( z, z^{-1} y^{-1} x y z )} \\
    &= \sigma(x,y) \sigma( xy,z) \sigma( y^{-1} x y, z) \overline{ \sigma( y, y^{-1} x y) }  \overline{ \sigma( xy,z) } \overline{ \sigma( z, z^{-1} y^{-1} x y z)} \\
    &= \sigma(x, yz) \sigma( y,z) \sigma( y^{-1} x y, z) \overline{ \sigma( y, y^{-1} x y z) } \overline{ \sigma( y^{-1} x y, z )} \overline{ \sigma(z, z^{-1} y^{-1} x y z) } \\
    &= \sigma(x, yz) \sigma( y,z)  \overline{ \sigma( y, y^{-1} x y z) } \overline{ \sigma(z, z^{-1} y^{-1} x y z) } \\
    &= \sigma(x, yz) \sigma( y,z) \overline{ \sigma( y, z) } \overline{ \sigma( yz, z^{-1} y^{-1} x y z  ) } \\
    &= \sigma(x, yz) \overline{ \sigma( yz, z^{-1} y^{-1} x y z  ) } \\
    &= \tilde{\sigma}(x,yz) .
\end{align*}
This proves \eqref{eq:tilde1}. Now \eqref{eq:tilde2} follows from the following special case of \eqref{eq:tilde1}:
\[ 1 = \tilde{\sigma}(x,y^{-1} y ) = \tilde{\sigma}(x,y^{-1}) \tilde{\sigma}(yxy^{-1},y) .\]
To prove \eqref{eq:tilde3}, note that $y^{-1} x y = y'^{-1} x y'$ implies that $y y'^{-1} x y' y^{-1} = x$, hence $x$ commutes with $yy'^{-1}$. Since $x$ is assumed to be $\sigma$-regular, we obtain
\[ \tilde{\sigma}(x,y'y^{-1}) = \sigma(x,y'y^{-1}) \overline{ \sigma(y'y^{-1}, y y'^{-1} x y' y^{-1})} = \sigma(x,y'y^{-1}) \overline{ \sigma(y'y^{-1}, x)} = 1 .\]
Applying \eqref{eq:tilde1} and \eqref{eq:tilde2} we get
\[ \tilde{\sigma}(x,y') \overline{ \tilde{\sigma}(x,y)}  =  \tilde{\sigma}(x,y') \overline{ \tilde{\sigma}( yy'^{-1} x y' y^{-1}, y) } =  \tilde{\sigma}(x,y') \tilde{\sigma}(y'^{-1} x y', y^{-1}) = \tilde{\sigma}(x, y'y^{-1}) = 1. \]
Hence $\tilde{\sigma}(x,y) = \tilde{\sigma}(x,y')$.
\end{proof}

Note that when $G$ is abelian, the identity \eqref{eq:tilde1} of \Cref{lem:cocycle_conj} reduces to the fact that $(x,y) \mapsto \sigma(x,y) \overline{ \sigma(y,x) }$ is a bicharacter on $G$, which is well-known (see e.g.\ \cite[Lemma 7.1]{Kl65}).

\subsection{The center-valued trace on twisted group von Neumann algebras}\label{subsec:twisted_group}

Let $\Gamma$ be a discrete group with a 2-cocycle $\sigma$ and denote by $\lambda_{\sigma}$ (resp.\ $\rho_{\sigma}$) the $\sigma$-twisted left (resp.\ right) regular representation of $\Gamma$. We then define the following two associated von Neumann algebras on $\ell^2(\Gamma)$:
\begin{align*}
    \vN(\Gamma,\sigma) = \lambda_{\sigma}(\Gamma)'', && \rvN(G,\sigma) = \rho_{\sigma}(\Gamma)''.
\end{align*}
The von Neumann algebra $\vN(\Gamma,\sigma)$ is called the \emph{$\sigma$-twisted group von Neumann algebra of $\Gamma$}. It is well-known that $\rvN(\Gamma,\overline{\sigma})$ is the commutant of $\vN(\Gamma,\sigma)$ on $\ell^2(\Gamma)$ (see \cite[Theorem 1]{Kl62}).

For the remainder of the section we set $M = \vN(\Gamma,\sigma)$ and $N = \rvN(\Gamma,\overline{\sigma})$. Let $\{ \delta_{\gamma} : \gamma \in \Gamma \}$ denote the usual orthonormal basis for $\ell^2(\Gamma)$. The map $\tau \colon \B(\ell^2(\Gamma)) \to \C$ given by
\[ \tau(a) = \langle a \delta_e, \delta_e \rangle \;\;\; \text{for $a \in M$} \]
restricts to a faithful normal tracial state on both $M$ and $N$, which shows that these von Neumann algebras are finite. Moreover, the GNS construction $L^2(M,\tau)$ with respect to $\tau$ is canonically isomorphic to $\ell^2(\Gamma)$ with cyclic, separating vector $\Omega = \delta_e$.

\begin{proposition}\label{thm:cvt}
The center-valued trace on $\vN(\Gamma,\sigma)$ is given as follows: If $\gamma \in \Gamma$ is such that the conjugacy class $C_{\gamma}$ is $\sigma$-regular and finite, say $C_{\gamma} = \{ \beta_1^{-1} \gamma \beta_1 , \ldots, \beta_k^{-1} \gamma \beta_k \}$, then
\[ \Tr(\lambda_{\sigma}(\gamma)) = |C_{\gamma}|^{-1} \sum_{j=1}^k \sigma(\gamma,\beta_j) \overline{ \sigma(\beta_j, \beta_j^{-1} \gamma \beta_j) } \lambda_{\sigma}(\beta_j^{-1} \gamma \beta_j). \]
Otherwise $\Tr(\lambda_{\sigma}(\gamma)) = 0$.
\end{proposition}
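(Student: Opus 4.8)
The plan is to identify $\Tr$ with an orthogonal projection in the GNS picture and then compute that projection on the basis vectors $\delta_\gamma$. First I would observe that $\Tr$ is the $\tau$-preserving conditional expectation onto $\Center(M)$: for $a\in M$ and $c\in\Center(M)$, properties (1)--(2) of the center-valued trace together with \eqref{eq:cvt_t} give
\[ \langle \Tr(a)\Omega, c\Omega\rangle = \tau(c^*\Tr(a)) = \tau(\Tr(c^*a)) = \tau(c^*a) = \langle a\Omega, c\Omega\rangle, \]
while $\Tr(a)\Omega \in \Center(M)\Omega$. Since $\Omega = \delta_e$ is cyclic and separating, this says precisely that $\Tr(a)\Omega = P(a\Omega)$, where $P$ is the orthogonal projection of $\ell^2(\Gamma) = L^2(M,\tau)$ onto the closed subspace $\mathcal{Z}_0 := \overline{\Center(M)\Omega}$. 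Because $\lambda_{\sigma}(\gamma)\Omega = \delta_\gamma$, the problem reduces to computing $P\delta_\gamma$ and then recognizing the resulting finitely supported vector, via the separating property of $\Omega$, as $\lambda_{\sigma}$ applied to an explicit function.

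Next I would describe $\mathcal{Z}_0$ concretely. Writing the symbol of $a\in M$ as $\hat a(\gamma) = \langle a\delta_e, \delta_\gamma\rangle$, centrality of $a$ is equivalent to commutation with every $\lambda_{\sigma}(\beta)$, which by the conjugation identity \eqref{eq:projective_conjugation} and a reindexing (using \eqref{eq:tilde2} to simplify the resulting cocycle factor) translates into the relation $\hat a(\beta^{-1}\gamma\beta) = \tilde{\sigma}(\gamma,\beta)\,\hat a(\gamma)$ for all $\beta,\gamma$. Taking $\beta$ in the centralizer of $\gamma$ forces $\hat a(\gamma)\bigl(1-\tilde{\sigma}(\gamma,\beta)\bigr)=0$, so $\hat a$ is supported on $\sigma$-regular elements; on a $\sigma$-regular class the assignment $\beta^{-1}\gamma\beta \mapsto \tilde{\sigma}(\gamma,\beta)$ is well defined by \eqref{eq:tilde3}. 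As $|\hat a|$ is then constant on each conjugacy class and $\hat a \in \ell^2(\Gamma)$, the support must consist of finite $\sigma$-regular classes. This shows $\mathcal{Z}_0$ has the orthogonal basis $\{v_C\}$ indexed by the $\sigma$-regular finite conjugacy classes $C$, where, fixing a representative $\gamma\in C$, one sets $v_C = \sum_{\gamma'\in C}\tilde{\sigma}(\gamma,\beta_{\gamma'})\delta_{\gamma'}$ with $\beta_{\gamma'}^{-1}\gamma\beta_{\gamma'}=\gamma'$; here $\|v_C\|^2 = |C|$ and each $v_C = c_C\Omega$ for a central element $c_C\in M$.

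Finally I would compute $P\delta_\gamma$. If $C_\gamma$ is not both $\sigma$-regular and finite, then $\gamma$ lies in no class indexing a basis vector, so $\delta_\gamma\perp\mathcal{Z}_0$ and $P\delta_\gamma = 0$, giving $\Tr(\lambda_{\sigma}(\gamma)) = 0$. If $C_\gamma$ is $\sigma$-regular and finite, then $\delta_\gamma$ meets only $v_{C_\gamma}$, and choosing $\gamma$ itself as representative (so $v_{C_\gamma}(\gamma) = \tilde{\sigma}(\gamma,e) = 1$) yields $P\delta_\gamma = |C_\gamma|^{-1} v_{C_\gamma}$. Writing $C_\gamma = \{\beta_1^{-1}\gamma\beta_1,\ldots,\beta_k^{-1}\gamma\beta_k\}$ and recalling $\lambda_{\sigma}(\beta_j^{-1}\gamma\beta_j)\Omega = \delta_{\beta_j^{-1}\gamma\beta_j}$, the separating property of $\Omega$ identifies $\Tr(\lambda_{\sigma}(\gamma))$ with $|C_\gamma|^{-1}\sum_j \tilde{\sigma}(\gamma,\beta_j)\lambda_{\sigma}(\beta_j^{-1}\gamma\beta_j)$, which is the asserted formula once $\tilde{\sigma}(\gamma,\beta_j)$ is expanded via \eqref{eq:tilde_cocycle}. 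I expect the main obstacle to be the middle step: extracting the twisted conjugation-invariance of the symbol from centrality, and then using $\sigma$-regularity together with \eqref{eq:tilde3} and the $\ell^2$ constraint to pin down both the support and the well-definedness of the $v_C$. The surrounding uniqueness and projection arguments are then routine.
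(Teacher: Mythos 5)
Your argument is correct, but it takes a genuinely different route from the paper's. The paper stays entirely at the algebra level: it applies $\Tr$ to the conjugation identity $\lambda_\sigma(\beta)^*\lambda_\sigma(\gamma)\lambda_\sigma(\beta) = \tilde\sigma(\gamma,\beta)\lambda_\sigma(\beta^{-1}\gamma\beta)$ to kill the non-$\sigma$-regular classes, cites a lemma of Omland for the infinite classes, and, for a finite $\sigma$-regular class, sums the conjugation identity over the class, checks via \Cref{lem:cocycle_conj} that the resulting element $S$ is central, and concludes $|C_\gamma|\Tr(\lambda_\sigma(\gamma)) = \Tr(S) = S$. You instead work spatially in the GNS representation: you identify $\Tr$ with the orthogonal projection onto $\overline{\Center(M)\Omega}$, describe the center explicitly as the $\tilde\sigma$-twisted class functions supported on finite $\sigma$-regular classes, and read off the formula by projecting $\delta_\gamma$ onto the single relevant basis vector $v_{C_\gamma}$. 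The two arguments share their cocycle backbone: your verification that $c_C$ is central is exactly the paper's computation that $S$ is central, resting on \eqref{eq:tilde1} and \eqref{eq:tilde3}. What your route buys is self-containedness (the $\ell^2$ constraint on symbols of central elements replaces the external citation for infinite conjugacy classes) and, as a byproduct, an explicit description of $\Center(\vN(\Gamma,\sigma))$; what it costs is the extra identification of $\Tr$ with the Jones projection and the derivation of the symbol relation $\hat a(\beta^{-1}\gamma\beta) = \tilde\sigma(\gamma,\beta)\,\hat a(\gamma)$ from centrality, which you should write out explicitly (it follows from $\hat a(\gamma)=\tau(\lambda_\sigma(\gamma)^*a)$, traciality of $\tau$, and commutation of $a$ with $\lambda_\sigma(\beta)$). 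I checked that relation and it is correct as stated, so there is no gap, only a step that deserves more detail than your sketch gives it.
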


\begin{proof}
To ease notation set $\lambda = \lambda_{\sigma}$. Let $\gamma \in \Gamma$. For any $\beta \in \Gamma$ we have by \eqref{eq:projective_conjugation} that
\begin{align}
    \lambda(\beta)^*\lambda(\gamma)\lambda(\beta) = \tilde{\sigma}(\gamma,\beta) \lambda(\beta^{-1} \gamma \beta ) . \label{eq:proj_conj_proof}
\end{align}
Taking center-valued traces, we obtain
\begin{equation}
    \Tr(\lambda(\gamma)) = \tilde{\sigma}(\gamma,\beta) \Tr(\lambda(\beta^{-1} \gamma \beta )) . \label{eq:cvt_eq}
\end{equation}
If $C_{\gamma}$ is infinite then $\lambda(\gamma) = 0$ by \cite[Lemma 2.2]{Om14}. If $C_{\gamma}$ is not $\sigma$-regular then there exists $\beta \in \Gamma$ that commutes with $\lambda$ yet $\sigma(\gamma,\beta) \neq \sigma(\beta, \gamma)$. But then \eqref{eq:cvt_eq} gives $\Tr(\lambda(\gamma)) =  \sigma(\gamma,\beta) \overline{\sigma(\beta,\gamma)} \Tr(\lambda(\gamma))$ which implies that $\Tr(\lambda(\gamma)) = 0$.

Suppose now that $C_{\gamma}$ is both $\sigma$-regular and finite, say $C_{\gamma} = \{ \beta_1^{-1} \gamma \beta_1, \ldots, \beta_k^{-1} \gamma \beta_k \}$. Summing \eqref{eq:proj_conj_proof} over all $\beta \in C_{\gamma}$ we get
\begin{equation}
    \sum_{j=1}^k \lambda(\beta_j)^*\lambda(\gamma)\lambda(\beta_j) = \sum_{j=1}^k \tilde{\sigma}(\gamma,\beta_j) \lambda(\beta_j^{-1} \gamma \beta_j ) . \label{eq:backto}
\end{equation}
Denote the right side of the above equality by $S$. We claim that $S$ is in the center of $\vN(\Gamma,\sigma)$. Indeed, if $\beta \in \Gamma$ we use \Cref{lem:cocycle_conj} \eqref{eq:tilde1} to compute that
\begin{align*}
    \lambda(\beta)^* S \lambda(\beta) &= \sum_{j=1}^k \tilde{\sigma}(\gamma,\beta_j) \lambda(\beta)^* \lambda( \beta_j^{-1} \gamma \beta_j) \lambda(\beta) \\
    &= \sum_{j=1}^k \tilde{\sigma}(\gamma,\beta_j) \tilde{\sigma}(\beta_j^{-1} \gamma \beta_j, \beta) \lambda( \beta^{-1} \beta_j^{-1} \gamma \beta_j \beta ) \\
    &= \sum_{j=1}^k \tilde{\sigma}(\gamma,\beta_j \beta) \lambda( \beta^{-1} \beta_j^{-1} \gamma \beta_j \beta ) .
\end{align*}
The set $\{ \beta^{-1} \beta_j^{-1} \gamma \beta_j \beta : 1 \leq j \leq k \}$ is equal to $C_{\gamma}$, say
\[ \beta^{-1} \beta_j^{-1} \gamma \beta_j \beta = \beta_{m_j}^{-1} \gamma \beta_{m_j} \]
for each $1 \leq j \leq k$, where $\{ m_1, \ldots, m_k \} = \{ 1, \ldots, k \}$. Since $\gamma$ is $\sigma$-regular, \Cref{lem:cocycle_conj} (3) implies that $\tilde{\sigma}(\gamma, \beta_j \beta) = \tilde{\sigma}(\gamma, \beta_{m_j})$ for each $j$, hence
\[ \lambda(\beta)^* S \lambda(\beta) = \sum_{j=1}^k \tilde{\sigma}(\gamma, \beta_{m_j}) \lambda( \beta_{m_j}^{-1} \gamma \beta_{m_j}) = S .\]
We conclude that $S$ is in the center of $\vN(\Gamma,\sigma)$. Taking the center-valued trace on both sides of \eqref{eq:backto} we now obtain
\begin{align*}
    |C_{\gamma}|\Tr(\lambda(\gamma)) = \sum_{j=1}^k \Tr\big( \lambda(\beta_j)^*\lambda(\gamma)\lambda(\beta_j) \big) = \sum_{j=1}^k \tilde{\sigma}(\gamma,\beta_j) \lambda(\beta_j^{-1} \gamma \beta_j).
\end{align*}
Dividing by $|C_{\gamma}|$ on both sides finishes the proof.
\end{proof}

As a corollary, we obtain the following well-known characterization of when $\vN(\Gamma,\sigma)$ is a factor.

\begin{corollary}
The von Neumann algebra $\vN(\Gamma,\sigma)$ is a factor if and only if $(\Gamma,\sigma)$ satisfies Kleppner's condition.
\end{corollary}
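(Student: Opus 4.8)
The plan is to combine the standard fact that a von Neumann algebra is a factor precisely when its center is trivial with the explicit center-valued trace computed in \Cref{thm:cvt}. Write $M = \vN(\Gamma,\sigma)$. Since the center-valued trace $\Tr \colon M \to \Center(M)$ restricts to the identity on $\Center(M)$ by property (3), we have $\Center(M) = \Tr(\Center(M)) \subseteq \Tr(M) \subseteq \Center(M)$, so $\Tr(M) = \Center(M)$. Thus $M$ is a factor if and only if $\Tr(M) = \C I$, and the whole problem reduces to understanding the image of $\Tr$.

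First I would show that $\Center(M)$ is the $\sigma$-weak closure of $\spn\{\Tr(\lambda_{\sigma}(\gamma)) : \gamma \in \Gamma\}$. The $*$-algebra generated by the $\lambda_{\sigma}(\gamma)$ is just their linear span, since products and adjoints of twisted translations are unimodular scalar multiples of other twisted translations; by the bicommutant theorem this span $D$ is $\sigma$-weakly dense in $M$. As $\Tr$ is normal, hence $\sigma$-weakly continuous, and $\Center(M)$ is $\sigma$-weakly closed, the inclusion $\Tr(\overline{D}) \subseteq \overline{\Tr(D)}$ gives $\Center(M) = \Tr(M) = \Tr(\overline{D}) \subseteq \overline{\Tr(D)} \subseteq \Center(M)$, so equality holds throughout.

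For the forward implication, assume $(\Gamma,\sigma)$ satisfies Kleppner's condition. Then the only $\sigma$-regular finite conjugacy class is $\{e\}$, so \Cref{thm:cvt} gives $\Tr(\lambda_{\sigma}(\gamma)) = 0$ for every $\gamma \neq e$, while $\Tr(\lambda_{\sigma}(e)) = \Tr(I) = I$. Hence $\spn\{\Tr(\lambda_{\sigma}(\gamma))\} = \C I$, which is already closed, so $\Center(M) = \C I$ and $M$ is a factor. Conversely, arguing by contraposition, suppose Kleppner's condition fails and pick $\gamma \neq e$ whose conjugacy class $C_{\gamma}$ is $\sigma$-regular and finite. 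Then \Cref{thm:cvt} exhibits the central element $\Tr(\lambda_{\sigma}(\gamma)) = |C_{\gamma}|^{-1}\sum_j \tilde{\sigma}(\gamma,\beta_j)\lambda_{\sigma}(\beta_j^{-1}\gamma\beta_j)$, which is nonzero because the $\lambda_{\sigma}(\beta_j^{-1}\gamma\beta_j)$ are distinct and linearly independent with unimodular coefficients. Since $e \notin C_{\gamma}$, we get $\tau(\Tr(\lambda_{\sigma}(\gamma))) = 0$, whereas $\tau(cI) = c$; so this central element is neither zero nor a nonzero scalar, and $M$ is not a factor.

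I expect the only delicate point to be the $\sigma$-weak density step of the second paragraph, namely justifying that the central elements $\Tr(\lambda_{\sigma}(\gamma))$ generate all of $\Center(M)$ rather than only what is visible on the generators; the rest is a direct reading of \Cref{thm:cvt}. Should that step be deemed too terse, I would fall back on a Fourier-coefficient argument: for arbitrary central $z$, combine centrality with the conjugation identity \eqref{eq:proj_conj_proof} to obtain $\langle z\delta_e, \delta_{\beta^{-1}\gamma\beta}\rangle = \tilde{\sigma}(\gamma,\beta)\langle z\delta_e, \delta_{\gamma}\rangle$, then force these coefficients to vanish off $\{e\}$ using square-summability of $z\delta_e$ on infinite classes and $\tilde{\sigma}(\gamma,\beta) \neq 1$ on non-$\sigma$-regular classes, concluding $z \in \C I$ because $\delta_e$ is separating.
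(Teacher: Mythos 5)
Your proof is correct and follows essentially the same route as the paper: both reduce factoriality to the behaviour of the center-valued trace computed in \Cref{thm:cvt}, the paper phrasing this as ``$\Tr$ reduces to $\tau(\cdot)I$'' and you as ``$\Tr(M)=\Center(M)=\C I$''. You simply make explicit the normality/density step (that the values $\Tr(\lambda_{\sigma}(\gamma))$ on generators determine the whole center) which the paper leaves implicit.
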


\begin{proof}
$\vN(\Gamma,\sigma)$ is a factor if and only if the center-valued trace reduces to the canonical faithful tracial state on $\vN(\Gamma,\sigma)$. By the expression in \Cref{thm:cvt} this happens if and only if the only $\sigma$-regular finite conjugacy class is $\{ e \}$.
\end{proof}

An analogous computation to that of the proof of \Cref{thm:cvt} shows that the center-valued trace on $\rvN(\Gamma,\sigma)$ is given by
\begin{equation}
    \Tr(\rho_{\sigma}(\gamma)) = |C_{\gamma}|^{-1} \sum_{j=1}^k \sigma(\gamma , \beta_j) \overline{ \sigma( \beta_j, \beta_j^{-1} \gamma \beta_j ) }  \rho_{\sigma}(\beta_j^{-1} \gamma \beta_j) \label{eq:cvt_r}
\end{equation}
if $C_{\gamma}$ is finite and $\sigma$-regular and $\Tr(\rho_{\sigma}(\gamma)) = 0$ otherwise. Note that for $\rvN(\Gamma,\overline{\sigma}) = \vN(\Gamma,\sigma)'$ one needs to conjugate $\sigma$ in the above equation.

\subsection{Fourier coefficients and positivity}

We can express the vectors $\delta_{\gamma}$ for $\gamma \in \Gamma$ in terms of $\lambda_{\sigma}$ and $\rho_{\overline{\sigma}}$ as
\begin{equation}
    \delta_\gamma = \lambda_{\sigma}(\gamma) \delta_e = \rho_{\overline{\sigma}}(\gamma)^* \delta_e . \label{eq:delta_gamma}
\end{equation}

The $\sigma$-twisted convolution of two functions $f,g \colon \Gamma \to \C$ is given by
\[ (f *_{\sigma} g)(\gamma) = \sum_{\gamma' \in \Gamma} \sigma(\gamma',\gamma'^{-1}\gamma)  f(\gamma')g(\gamma'^{-1}\gamma) = \sum_{\gamma' \in \Gamma} f(\gamma') \lambda_{\sigma}(\gamma')g(\gamma) \;\;\; \text{for $\gamma \in \Gamma$.} \]
Given $a \in \vN(\Gamma,\sigma)$, then the \emph{Fourier coefficient} of $a$ is the element $\widehat{a} = a \delta_e$ of $\ell^2(\Gamma)$. Since $\delta_e$ is a separating vector for $\vN(\Gamma,\sigma)$ on $\ell^2(\Gamma)$, it follows that $a$ is uniquely determined by $\widehat{a}$. Using twisted convolution, we can describe how $a$ acts on $\ell^2(\Gamma)$ via its Fourier coefficient; see \cite[p.\ 343]{BeCo09}:
\begin{equation*}
    a f = \widehat{a} *_{\sigma} f \label{eq:fourier_conv} \;\;\; \text{for all $f \in \ell^2(\Gamma)$.}
\end{equation*}
In particular $\widehat{a} *_{\sigma} f \in \ell^2(\Gamma)$ for all $f \in \ell^2(\Gamma)$. Using \eqref{eq:delta_gamma}, the values of $\widehat{a}$ can be expressed as
\begin{equation*}
    \widehat{a}(\gamma) = \langle \widehat{a}, \delta_{\gamma} \rangle = \tau( \lambda_{\sigma}(\gamma)^* a ) = \tau( \rho_{\overline{\sigma}}(\gamma) a ) . \label{eq:fourier_trace}
\end{equation*}

A function $\phi \in \ell^\infty(\Gamma)$ is called \emph{$\sigma$-positive definite} if for all $\gamma_1, \ldots, \gamma_n \in \Gamma$ and $c_1, \ldots c_n \in \C$ we have
\[ \sum_{i,j} \sigma(\gamma_j \gamma_i^{-1}, \gamma_i) c_i \overline{ c_j} \phi(\gamma_j \gamma_i^{-1}) \geq 0 . \]

\begin{proposition}\label{prop:pos_def}
A function $\phi \in \ell^\infty(\Gamma)$ is $\sigma$-positive definite if and only if
\[ \langle \phi *_{\sigma} f, f \rangle \geq 0 \]
for all functions $f \colon \Gamma \to \C$ with finite support. In particular, $a \in \vN(\Gamma,\sigma)$ is positive if and only if $\widehat{a}$ is a $\sigma$-positive definite function.
\end{proposition}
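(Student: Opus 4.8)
The plan is to reduce the entire statement to one direct computation that identifies the quadratic form in the definition of $\sigma$-positive definiteness with the inner product $\langle \phi *_{\sigma} f, f\rangle$. First I would take a finitely supported $f \colon \Gamma \to \C$ and write $f = \sum_{i=1}^n c_i \delta_{\gamma_i}$, where $\gamma_1,\ldots,\gamma_n$ are the distinct points of $\operatorname{supp}(f)$ and $c_i = f(\gamma_i)$. Expanding the definition of $\sigma$-twisted convolution and of the inner product gives
\[ \langle \phi *_{\sigma} f, f\rangle = \sum_{\gamma,\gamma' \in \Gamma} \sigma(\gamma',\gamma'^{-1}\gamma)\,\phi(\gamma')\,f(\gamma'^{-1}\gamma)\,\overline{f(\gamma)} . \]
The surviving terms are precisely those with $\gamma = \gamma_j$ and $\gamma'^{-1}\gamma = \gamma_i$ for some $i,j$, i.e.\ $\gamma' = \gamma_j\gamma_i^{-1}$. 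The crucial simplification is that $(\gamma_j\gamma_i^{-1})^{-1}\gamma_j = \gamma_i$, so the cocycle weight collapses to $\sigma(\gamma_j\gamma_i^{-1},\gamma_i)$ with no leftover factor, yielding
\[ \langle \phi *_{\sigma} f, f\rangle = \sum_{i,j} \sigma(\gamma_j\gamma_i^{-1},\gamma_i)\,c_i\,\overline{c_j}\,\phi(\gamma_j\gamma_i^{-1}) , \]
which is exactly the expression appearing in the definition of $\sigma$-positive definiteness.

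From this identity the first equivalence is immediate. Given arbitrary $\gamma_1,\ldots,\gamma_n$ and scalars $c_1,\ldots,c_n$, the associated $f = \sum_i c_i\delta_{\gamma_i}$ is finitely supported (after merging any repeated group elements, which by bilinearity leaves the value of the form unchanged), so demanding that the displayed sum be nonnegative for all such data is literally the same as demanding $\langle \phi *_{\sigma} f, f\rangle \geq 0$ for all finitely supported $f$. This establishes the stated "if and only if".

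For the final assertion I would invoke the identity $af = \widehat{a} *_{\sigma} f$, valid for every $f \in \ell^2(\Gamma)$ and recorded just before the proposition. Since $\widehat{a}(\gamma) = \langle a\delta_e,\delta_\gamma\rangle$ satisfies $|\widehat{a}(\gamma)| \leq \|a\|$, we have $\widehat{a} \in \ell^\infty(\Gamma)$, so the first part applies with $\phi = \widehat{a}$. If $a \geq 0$, then $\langle \widehat{a} *_{\sigma} f, f\rangle = \langle af, f\rangle \geq 0$ for every finitely supported $f$, so $\widehat{a}$ is $\sigma$-positive definite. Conversely, if $\widehat{a}$ is $\sigma$-positive definite, then $\langle af, f\rangle \geq 0$ on all finitely supported functions; since these are dense in $\ell^2(\Gamma)$ and $g \mapsto \langle ag, g\rangle$ is continuous by boundedness of $a$, positivity extends to all of $\ell^2(\Gamma)$, giving $a \geq 0$.

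I do not anticipate a genuine obstacle here; the computation is elementary once set up correctly. The only point requiring real care is the bookkeeping of the cocycle argument inside the convolution, specifically verifying the collapse $(\gamma_j\gamma_i^{-1})^{-1}\gamma_j = \gamma_i$ so that the weight reduces to precisely $\sigma(\gamma_j\gamma_i^{-1},\gamma_i)$ and matches the definition with no residual cocycle term. The remaining subtlety, handling possibly repeated $\gamma_i$ and passing from finitely supported $f$ to dense approximation in the operator statement, is routine.
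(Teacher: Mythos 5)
Your proposal is correct and follows essentially the same route as the paper: the same change of variables in the double sum identifying $\langle\phi *_{\sigma} f, f\rangle$ with the quadratic form in the definition, followed by the same density argument for the operator statement. No gaps.
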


\begin{proof}
For a function $f$ on $\Gamma$ of finite support we have that
\begin{align*}
    \langle \phi *_{\sigma} f, f \rangle &= \sum_{\gamma \in \Gamma} (\phi *_{\sigma} f)(\gamma) \overline{f(\gamma)} \\
    &= \sum_{\gamma,\gamma' \in \Gamma} \phi(\gamma') f(\gamma'^{-1} \gamma) \sigma(\gamma', \gamma'^{-1} \gamma) \overline{f(\gamma)} \\
    &= \sum_{\gamma,\gamma' \in \Gamma} \sigma(\gamma \gamma'^{-1}, \gamma') f(\gamma') \overline{f(\gamma)} \phi(\gamma \gamma'^{-1}) .
\end{align*}
Letting the support of $f$ be $\{ \gamma_1, \ldots, \gamma_n \}$ and setting $c_i = f(\gamma_i)$ for each $1 \leq i \leq n$, the above expression becomes exactly the expression in the definition of $\sigma$-positivity.

The positivity of $a \in \vN(\Gamma,\sigma)$ is equivalent to $\langle af, f \rangle = \langle \widehat{a} *_{\sigma} f, f \rangle \geq 0$ for all $f \in \ell^2(\Gamma)$. By writing $f$ as a limit of functions on $\Gamma$ of finite support the last condition is equivalent to $\widehat{a}$ being $\sigma$-positive definite as shown above.
\end{proof}

\section{Hilbert modules from square-integrable representations}\label{sec:dimension}

Throughout this section $G$ denotes a second countable, unimodular, locally compact group, $\sigma$ denotes a 2-cocycle on $G$, and $\Gamma$ denotes a lattice in $G$. We fix a Haar measure on $G$ and simply write $\dif{x}$ when we integrate with respect to it. Soon $(\pi, \Hip)$ will denote a $\sigma$-projective unitary representation of $G$ which is irreducible and square-integrable as defined in \Cref{prop:si_unimodular}. We will denote by $\lambda_{\sigma}^G$ (resp.\ $\lambda_{\sigma}^{\Gamma}$) the $\sigma$-projective left regular representation of $G$ (resp.\ of $\Gamma$). We will also let $M = \vN(\Gamma,\sigma)$ and $N = \rvN(\Gamma,\overline{\sigma})$.

\subsection{Lattices in locally compact groups}\label{subsec:subgroups_lcgroups}

Suppose $\Gamma$ is a lattice in $G$. Then there exists a Borel measurable set $B \subseteq G$ such that the collection $\{ \gamma B : \gamma \in \Gamma \}$ forms a partition of $G$ \cite{Ma52}. Such a set $B$ is called a \emph{fundamental domain} for $\Gamma$ in $G$. Weil's formula relates integration over $G$ to integration over $\Gamma$ and $B$:
\begin{align*}
    \int_G f(x) \dif{x} = \sum_{\gamma \in \Gamma} \int_B f(\gamma y) \dif{y} \;\;\; \text{for all $f \in L^1(G)$.} 
\end{align*}
In general, there are many fundamental domains for $\Gamma$ in $G$, but they all have the same measure. This number is called the \emph{covolume} of $\Gamma$ in $G$ and is denoted by $\vol(G/\Gamma)$. If $B$ has finite measure then $\Gamma$ is called a \emph{lattice} in $G$.

Now set $\mathcal{K} = L^2(B)$ and fix a 2-cocycle $\sigma$ on $G$. Let $M = \vN(\Gamma,\sigma)$ be the $\sigma$-twisted group von Neumann algebra of $\Gamma$ with its canonical trace $\tau$. Then according to \Cref{subsec:twisted_group}, the GNS construction of $(M,\tau)$ can be naturally identified with $\ell^2(\Gamma)$. Because of our assumptions on $G$, the Hilbert space $\mathcal{K}$ is separable and infinite-dimensional if $G$ is infinite. Consequently the Hilbert $M$-module $\ell^2(\Gamma) \otimes \mathcal{K}$ is exactly the module into which every separable Hilbert $M$-module embeds according to \Cref{prop:big_module}. If $G$ is finite then $\vN(\Gamma,\sigma)$ is finite-dimensional and in this case every Hilbert $M$-module also embeds into $\ell^2(\Gamma) \otimes \mathcal{K}$. The following proposition shows that $\ell^2(\Gamma) \otimes \mathcal{K}$ can be naturally identified with $L^2(G)$.

\begin{proposition}\label{prop:big_module_l2}
The von Neumann algebra $\lambda_{\sigma}^G(\Gamma)''$ on $L^2(G)$ is isomorphic to $\vN(\Gamma,\sigma)$. Moreover, the Hilbert $\vN(\Gamma,\sigma)$-module $L^2(G)$ is isomorphic to $\ell^2(\Gamma) \otimes \mathcal{K}$ via the map $U \colon \ell^2(\Gamma) \otimes \mathcal{K} \to L^2(G)$ given by
\begin{equation}
    U(f \otimes g)(\gamma y) = \sigma(\gamma,y) f(\gamma) g(y) \;\;\; \text{for $\gamma \in \Gamma$ and $y \in B$.}
\end{equation}
\end{proposition}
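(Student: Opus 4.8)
The plan is to reduce the whole statement to a single intertwining identity for the generators. Specifically, I would show that $U$ is a unitary satisfying
\[ U\,(\lambda_{\sigma}^{\Gamma}(\gamma) \otimes 1) = \lambda_{\sigma}^G(\gamma)\, U \qquad \text{for all } \gamma \in \Gamma. \]
Once this is in hand, both assertions follow formally: since $M = \vN(\Gamma,\sigma) = \lambda_{\sigma}^{\Gamma}(\Gamma)''$ acts on $\ell^2(\Gamma) \otimes \mathcal{K}$ diagonally as $a \mapsto a \otimes 1$, the conjugation $a \mapsto U(a \otimes 1)U^*$ is a spatial $*$-isomorphism carrying $M \otimes 1 \cong M$ onto the weak closure $\lambda_{\sigma}^G(\Gamma)''$, which proves the first claim; and the same relation exhibits $U$ as an $M$-linear unitary, hence as the desired isomorphism of Hilbert $\vN(\Gamma,\sigma)$-modules.

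To see that $U$ is unitary, I would first note that because $B$ is a fundamental domain, every $x \in G$ has a unique expression $x = \gamma y$ with $\gamma \in \Gamma$ and $y \in B$, so the formula for $U(f \otimes g)$ is unambiguous. As $\lvert \sigma \rvert \equiv 1$, the partition $G = \bigsqcup_{\gamma \in \Gamma} \gamma B$ (equivalently Weil's formula) gives $\lVert U(f \otimes g) \rVert_{L^2(G)}^2 = \sum_{\gamma} \lvert f(\gamma)\rvert^2 \int_B \lvert g \rvert^2 = \lVert f \otimes g \rVert^2$, so $U$ is isometric on elementary tensors and extends to an isometry. Identifying $\ell^2(\Gamma) \otimes L^2(B) \cong L^2(\Gamma \times B)$, surjectivity is then immediate: given $h \in L^2(G)$, the function $(\gamma,y) \mapsto \overline{\sigma(\gamma,y)}\, h(\gamma y)$ lies in $L^2(\Gamma \times B)$ and is sent to $h$ by $U$.

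The heart of the argument is the intertwining relation, which I would verify by evaluating both sides at a point $\beta z$ with $\beta \in \Gamma$, $z \in B$, on an elementary tensor $f \otimes g$. Unwinding the definitions of $U$ and $\lambda_{\sigma}^{\Gamma}$, the left-hand side becomes $\sigma(\beta,z)\,\sigma(\gamma,\gamma^{-1}\beta)\,f(\gamma^{-1}\beta)\,g(z)$. For the right-hand side the essential observation is that $\gamma^{-1}\beta z$ is already in fundamental-domain form, since $\gamma^{-1}\beta \in \Gamma$ and $z \in B$, so no further reduction is needed; thus it equals $\sigma(\gamma,\gamma^{-1}\beta z)\,\sigma(\gamma^{-1}\beta,z)\,f(\gamma^{-1}\beta)\,g(z)$. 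Matching the two therefore reduces exactly to the cocycle identity \eqref{eq:cocycle1} with $(x,y,z) = (\gamma, \gamma^{-1}\beta, z)$, namely $\sigma(\gamma,\gamma^{-1}\beta)\,\sigma(\beta,z) = \sigma(\gamma,\gamma^{-1}\beta z)\,\sigma(\gamma^{-1}\beta,z)$.

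The step I expect to be the main obstacle is precisely this phase bookkeeping: keeping the four 2-cocycle factors correctly aligned and recognizing that the representative $\gamma^{-1}\beta z$ need not be reduced back into $B$. The cleanliness of the whole proof hinges on that observation, after which the cocycle identity matches the phases on the nose. Everything else — well-definedness, the isometry estimate using $\lvert\sigma\rvert = 1$, and passing from the generators to the full von Neumann algebras by taking weak closures — is routine.
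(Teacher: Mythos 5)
Your proposal is correct and follows essentially the same route as the paper: the paper likewise asserts unitarity of $U$ from the fundamental-domain decomposition and then verifies the intertwining identity $U(\lambda_{\sigma}^{\Gamma}(\gamma)\otimes I)=\lambda_{\sigma}^G(\gamma)U$ pointwise at $\gamma' y$, using exactly the cocycle identity \eqref{eq:cocycle1} with $(x,y,z)=(\gamma,\gamma^{-1}\gamma',y)$ that you identify. Your added detail on surjectivity via the explicit inverse $(\gamma,y)\mapsto\overline{\sigma(\gamma,y)}\,h(\gamma y)$ is a harmless elaboration of what the paper leaves implicit.
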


\begin{proof}
That $U$ is a well-defined unitary operator follows from the fact that $B$ is a fundamental domain for $\Gamma$ in $G$.

Let $f \in \ell^2(\Gamma)$ and $g \in \mathcal{K}$. The following computation shows that $U$ intertwines $\lambda_{\sigma}^{\Gamma} \otimes I$ and $\lambda_{\sigma}^G$, where $\lambda_{\sigma}^{\Gamma}$ denotes the $\sigma$-twisted left regular representation of $\Gamma$:
\begin{align*}
    \lambda_{\sigma}^G(\gamma) U(f \otimes g)(\gamma' y) &= \sigma(\gamma,\gamma^{-1} \gamma' y) U(f \otimes g)(\gamma^{-1} \gamma' y) \\
    &= \sigma(\gamma,\gamma^{-1} \gamma' y) \sigma(\gamma^{-1} \gamma' ,y) f(\gamma^{-1} \gamma')g(y) \\
    &= \sigma(\gamma, \gamma^{-1} \gamma') \sigma( \gamma \gamma^{-1} \gamma', y)  f(\gamma^{-1} \gamma')g(y) \\
    &= \sigma(\gamma', y) (\lambda_{\sigma}^{\Gamma}(\gamma)f \otimes g )(\gamma' y) \\
    &= U( \lambda_{\sigma}^{\Gamma}(\gamma) f \otimes g)(\gamma' y) .
\end{align*}
Thus the map $\vN(\Gamma,\sigma) \to \lambda_{\sigma}^G(\Gamma)''$ given by $a \mapsto U (a \otimes I) U^*$ is an isomorphism of von Neumann algebras and the Hilbert $\vN(\Gamma,\sigma)$-modules $\ell^2(\Gamma) \otimes \mathcal{K}$ and $L^2(G)$ are isomorphic.
\end{proof}

Let $(e_i)_{i \in \N}$ be an orthonormal basis for $\mathcal{K}$, and denote by $\tilde{e_i}$ the extension of $e_i$ to the whole of $G$ by zero outside of $B$. Then $U(\delta_e \otimes e_i) = \tilde{e_i}$. Since $( \delta_{\gamma} )_{\gamma \in \Gamma}$ is an orthonormal basis for $\ell^2(\Gamma)$, it follows that $( \delta_{\gamma} \otimes e_i )_{\gamma \in \Gamma, i \in \N}$ is an orthonormal basis for $\ell^2(\Gamma) \otimes \mathcal{K}$. Since
\[ U( \delta_{\gamma} \otimes e_i) = U( \lambda_{\sigma}^{\Gamma}(\gamma) \delta_e \otimes e_i ) = \lambda_{\sigma}^G(\gamma) (\delta_e \otimes e_i) = \lambda_{\sigma}^G(\gamma) \tilde{e_i} \]
it follows from \Cref{prop:big_module_l2} that $( \lambda_{\sigma}^G(\gamma) \tilde{e_i} )_{\gamma \in \Gamma, i \in \N }$ is an orthonormal basis for $L^2(G)$.

\subsection{Orthogonality relations}

The following orthogonality relation for the matrix coefficients of irreducible square-integrable representations forms the basis for our considerations, see \cite[Definition/Proposition 3.1]{Ra98} for a proof.

\begin{proposition}\label{prop:si_unimodular}
Let $\pi$ be a $\sigma$-projective irreducible unitary representation of a unimodular locally compact group $G$. The following are equivalent:
\begin{enumerate}
    \item There exist nonzero vectors $\xi, \eta \in \Hip$ such that $\int_G | \langle \xi, \pi(x) \eta \rangle |^2 \dif{x} < \infty$.
    \item For every $\xi , \eta \in \Hip$ we have that $\int_G | \langle \xi, \pi(x) \eta \rangle |^2 \dif{x} < \infty$.
    \item $\pi$ is a subrepresentation of the $\sigma$-twisted left regular representation of $G$.
\end{enumerate}
In case any (and hence all) of the above assumptions hold, then there exists a number $d_{\pi} > 0$ called the \emph{formal dimension} of $\pi$ such that
\begin{equation}
    \int_G \langle \xi, \pi(x) \eta \rangle \overline{ \langle \xi', \pi(x) \eta' \rangle } \dif{x} = d_{\pi}^{-1} \langle \xi, \xi' \rangle \overline{ \langle \eta,\eta' \rangle } \label{eq:schur}
\end{equation}
for all $\xi, \eta \in \Hip$.
\end{proposition}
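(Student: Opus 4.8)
The plan is to build everything from the \emph{voice transform} (the matrix-coefficient map) together with the twisted analogue of Schur's lemma. For fixed $\eta \in \Hip$ define the matrix coefficient $c_{\xi,\eta}(x) = \langle \xi, \pi(x)\eta\rangle$ and the voice transform $V_{\eta}\xi = c_{\xi,\eta}$, regarded as a map from $\Hip$ into functions on $G$. The computational heart of the argument is a single intertwining identity. Using $\pi(e)=I$ and the cocycle relation one gets $\pi(g)^{*}\pi(x) = \overline{\sigma(g,g^{-1}x)}\,\pi(g^{-1}x)$, and feeding this into the definition of $V_{\eta}$ yields
\begin{equation*}
    V_{\eta}(\pi(g)\xi)(x) = \sigma(g,g^{-1}x)\,\langle\xi,\pi(g^{-1}x)\eta\rangle = \bigl(\lambda_{\sigma}^{G}(g)\,V_{\eta}\xi\bigr)(x),
\end{equation*}
so that $V_{\eta}\pi(g) = \lambda_{\sigma}^{G}(g) V_{\eta}$ for every $g$. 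Thus, once $V_{\eta}$ is shown to be bounded into $L^{2}(G)$, it is automatically a twisted intertwiner between $\pi$ and $\lambda_{\sigma}^{G}$. I would record the two easy implications first: (2)$\Rightarrow$(1) is trivial, and (3)$\Rightarrow$(1) follows from a direct square-integrability estimate for coefficients of $\lambda_{\sigma}^{G}$, pairing an image vector with the projection of a function in $C_{c}(G)$ and applying Cauchy--Schwarz together with Weil's formula.

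The substantive direction is (1)$\Rightarrow$(2),(3). Assume $c_{\xi_{0},\eta_{0}}\in L^{2}(G)$ for some nonzero $\xi_{0},\eta_{0}$. First I would show the domain $\mathcal D = \{\xi : V_{\eta_{0}}\xi \in L^{2}(G)\}$ is a $\pi$-invariant subspace containing $\xi_{0}$; the intertwining identity and unitarity of $\lambda_{\sigma}^{G}(g)$ make it invariant, and irreducibility then forces $\mathcal D$ to be dense. Next, $V_{\eta_{0}}$ is \emph{closed} on $\mathcal D$: $L^{2}$-convergence forces almost-everywhere convergence along a subsequence, while $V_{\eta_{0}}\xi(x)=\langle\xi,\pi(x)\eta_{0}\rangle$ converges pointwise, so the graph is closed. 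The positive self-adjoint operator $V_{\eta_{0}}^{*}V_{\eta_{0}}$ then commutes with every $\pi(g)$, so its spectral projections lie in the commutant of $\pi(G)$; by the twisted Schur lemma (the commutant of an irreducible projective representation is $\C I$, e.g.\ via the associated central extension) each such projection is $0$ or $I$, whence $V_{\eta_{0}}^{*}V_{\eta_{0}}=c\,I$ for some $c>0$. This gives $\|V_{\eta_{0}}\xi\|_{2}^{2} = c\|\xi\|^{2}$ for all $\xi$, so $\mathcal D = \Hip$ and $c^{-1/2}V_{\eta_{0}}$ is an isometric intertwiner realizing $\pi$ as a subrepresentation of $\lambda_{\sigma}^{G}$, which is (3). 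Running the same argument in the $\eta$-variable — the admissible vectors form a closed $\pi$-invariant, hence dense and then full, subspace — upgrades this to $c_{\xi,\eta}\in L^{2}(G)$ for all $\xi,\eta$, which is (2).

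For the orthogonality relation I would note that for admissible $\eta,\eta'$ the bounded operator $V_{\eta'}^{*}V_{\eta}$ again commutes with $\pi(G)$ and hence equals a scalar $\beta(\eta,\eta')I$; this yields $\int_{G} \langle\xi,\pi(x)\eta\rangle\overline{\langle\xi',\pi(x)\eta'\rangle}\dif{x} = \beta(\eta,\eta')\langle\xi,\xi'\rangle$, in which $\beta(\eta,\eta')$ is visibly independent of $\xi,\xi'$. The form $\beta$ is sesquilinear, bounded, and positive-definite, and running the analogous intertwining argument in the $\eta$-variable shows that $\beta(\eta,\eta')$ depends on $\eta,\eta'$ only through $\overline{\langle\eta,\eta'\rangle}$, with a single proportionality constant; writing this constant as $d_{\pi}^{-1}$ produces exactly \eqref{eq:schur}. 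Taking $\xi=\xi'$ and $\eta=\eta'$ unit vectors forces $d_{\pi}^{-1}>0$, since the continuous coefficient $c_{\xi,\eta}$ cannot vanish identically (its value at $e$ is $\langle\xi,\eta\rangle$).

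I expect the main obstacle to be the passage from a single square-integrable coefficient to boundedness of $V_{\eta_{0}}$: the closed-graph bookkeeping together with the correct unbounded form of the twisted Schur lemma is where the analytic care is needed, and it is also the step where the cocycle must be tracked to confirm that $V_{\eta_{0}}^{*}V_{\eta_{0}}$ really lands in the commutant of $\pi(G)$ rather than some twisted variant. A cleaner but more structural alternative would be to lift $\pi$ to an ordinary representation of the central extension of $G$ by $\T$ on which $\sigma$ trivializes, apply the classical orthogonality theorem there, and descend by integrating out the central $\T$-fibre; the constants in \eqref{eq:schur} would then be inherited from the untwisted relations.
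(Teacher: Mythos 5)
Your proposal is correct, and where it overlaps with what the paper actually writes it is identical: the paper does not prove this proposition but cites Radulescu \cite[Definition/Proposition 3.1]{Ra98}, and only spells out the passage from (2) to (3) via the very same intertwining computation $V_{\eta}\pi(g)=\lambda_{\sigma}(g)V_{\eta}$ that you derive (including the cocycle simplification $\overline{\sigma(g,g^{-1})}\sigma(g^{-1},x)=\overline{\sigma(g,g^{-1}x)}$). What you add is a complete, self-contained Duflo--Moore/Godement-type argument for the hard implication and for the orthogonality relation, which is a legitimate and standard route; the cocycle genuinely plays no role in the commutant computations since $\lambda_{\sigma}(g)^{*}\lambda_{\sigma}(g)=I$ kills it, exactly as you anticipate. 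Two small points deserve tightening. First, to get $c>0$ from $V_{\eta_{0}}^{*}V_{\eta_{0}}=cI$ you should note that $V_{\eta_{0}}\xi_{0}$ is continuous and not identically zero (if $\langle\xi_{0},\pi(x)\eta_{0}\rangle\equiv 0$ then $\xi_{0}\perp\clspn\{\pi(x)\eta_{0}\}=\Hip$ by irreducibility), hence has positive $L^{2}$-norm. Second, your claim that the admissible vectors form a \emph{closed} invariant subspace is not immediate and is the one genuinely glib step; the clean fix is the unimodularity symmetry $\int_{G}|\langle\xi,\pi(x)\eta\rangle|^{2}\dif{x}=\int_{G}|\langle\eta,\pi(x)\xi\rangle|^{2}\dif{x}$ (the cocycle factors have modulus one and drop out under $x\mapsto x^{-1}$), which lets you transfer boundedness of $V_{\eta_{0}}$ to boundedness of every $V_{\xi}$ and hence to all pairs; this also yields $\beta(\eta,\eta')=d_{\pi}^{-1}\overline{\langle\eta,\eta'\rangle}$ directly. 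Also, the estimate you want for (3)$\Rightarrow$(1) is Young's inequality for (twisted) convolution rather than Weil's formula. Your closing alternative via the Mackey central extension is also viable and is essentially how such statements are reduced to the classical unimodular orthogonality relations in the literature.
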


Representations satisfying any of the equivalent conditions in \Cref{prop:si_unimodular} are called \emph{square-integrable}. We now fix a square-integrable, $\sigma$-projective, irreducible, unitary representation $\pi$ of $G$.

We detail the passage from $(2)$ to $(3)$ in \Cref{prop:si_unimodular} as it will be relevant in this section. For $\xi,\eta \in \Hip$ we define the \emph{(generalized) wavelet transform} $V_{\eta} \xi \colon G \to \C$ by
\begin{equation*}
    V_{\eta} \xi(x) = \langle \xi, \pi(x) \eta \rangle \;\;\; \text{for all $x \in G$.}
\end{equation*}
From the assumption that $(2)$ in \Cref{prop:si_unimodular} holds it follows that $V_{\eta}$ maps $\Hip$ into $L^2(G)$. Moreover, $V_{\eta} \xi$ intertwines $\pi$ and the $\sigma$-twisted left regular representation as can be seen from the following calculation for $x,y \in G$:
\begin{align*}
    V_{\eta} (\pi(x) \xi)(y) &= \langle \pi(x) \xi, \pi(y) \eta \rangle \\
    &= \langle \xi, \pi(x)^* \pi(y) \eta \rangle \\
    &= \langle \xi, \overline{\sigma(x,x^{-1})} \sigma(x^{-1},y) \pi(x^{-1}y) \rangle \\
    &= \sigma(x,x^{-1}y) V_{\eta} \xi(x^{-1}y) \\
    &= \lambda_{\sigma}(x) V_{\eta} \xi(y) .
\end{align*}
If one sets $\eta = \eta'$ to be a unit vector in \eqref{eq:schur} one obtains
\[ \langle V_{\eta} \xi, V_{\eta} \xi' \rangle = d_{\pi}^{-1} \langle \xi, \xi' \rangle \;\;\; \text{for all $\xi,\xi' \in \Hip$.} \]
It follows that the map $d_{\pi}^{1/2} V_{\eta}$ is an isometry from $\Hip$ to $L^2(G)$. Since $V_{\eta}$ is also an intertwiner, $\pi$ is a subrepresentation of $\lambda_{\sigma}$. This establishes $(3)$ of \Cref{prop:si_unimodular}.

Now using the fact that $d_{\pi}^{1/2} V_{\eta}$ is an isometric intertwiner between $\pi$ and $\lambda_{\sigma}$, $V_{\eta} \Hip$ is a submodule of the Hilbert $\vN(\Gamma,\sigma)$-module $L^2(G)$ from \Cref{prop:big_module_l2}. Thus $\Hip$ becomes a Hilbert $\vN(\Gamma,\sigma)$-module isomorphic to $V_{\eta} \Hip$ via the action
\[ a \xi \coloneqq V_{\eta}^* a V_{\eta} \xi \;\;\; \text{for $a \in \vN(\Gamma,\sigma)$ and $\xi \in \Hip$.} \]

\subsection{Computing the center-valued von Neumann dimension}\label{subsec:computation}

In this subsection we will compute the center-valued von Neumann dimension of $\Hip$ as a Hilbert $\vN(\Gamma,\sigma)$-module. Our approach will be a modification of the approach in \cite{Be04} with necessary changes needed to incorporate the 2-cocycle $\sigma$.

As before let $M = \vN(\Gamma,\sigma)$ and $N = \rvN(\Gamma,\overline{\sigma})$.  Let $\eta$ be any unit vector in $\Hip$. By the discussion in the previous section, $\Hip$ has the structure of a Hilbert $\vN(\Gamma,\sigma)$-module defined using the wavelet transform $V_{\eta}$. Denoting by $U$ the intertwiner of \Cref{prop:big_module_l2}, we have that $U^* V_{\eta} \Hip \subseteq \ell^2(\Gamma) \otimes \mathcal{K}$, so let $p$ denote the projection of $\ell^2(\Gamma) \otimes \mathcal{K}$ onto $U^* V_{\eta} \Hip$. Then $\cdim_M \Hip = \Phi(p)$ where $\Phi$ is the faithful, semi-finite, normal extended center-valued trace of $N \otimes \mathcal{B}(\mathcal{K})$ as defined as in \eqref{eq:semi_finite}. Set $\tilde{p} = U p U^*$, i.e.\ $\tilde{p}$ is the orthogonal projection of $L^2(G)$ onto $V_{\eta} \Hip$.

Now $\Phi(p) = \sum_i \Tr(p_{ii})$ where each $\Tr(p_{ii})$ is a positive operator in $\mathcal{Z}(N) = \mathcal{Z}(M)$. By \Cref{prop:pos_def} they are all given by $\Tr(p_{ii}) f = \phi_i *_{\sigma} f$ for $f \in \ell^2(\Gamma)$ where $\phi_i$ is the Fourier coefficient of $\Tr(p_{ii})$. The values of $\phi_i$ can be expressed as $\phi_i(\gamma) = \tau( \rho_{\overline{\sigma}}(\gamma) \Tr(p_{ii}))$ for $\gamma \in \Gamma$. Summing these values over $i$ we obtain
\[ \phi(\gamma) \coloneqq \sum_i \tau(\rho_{\overline{\sigma}}(\gamma) \Tr(p_{ii})) = \tau \Big( \rho_{\overline{\sigma}}(\gamma) \sum_i \Tr(p_{ii}) \Big) = \tau( \rho_{\overline{\sigma}}(\gamma) \Phi(p)) .\]
Note that for $a \in M$, $a \geq 0$, we have that
\[ | \tau(a \, \Phi(p)) | \leq \sum_i | \tau( a \Tr(p_{ii})) | \leq \sum_i \|  a \| \tau( \Tr(p_{ii})) = \| a \| \dim_M \Hip .\]
Hence, if $\dim_M \Hip < \infty$, then the function $\phi$ is well-defined and in $\ell^\infty(\Gamma)$. Moreover, as a (possibly unbounded) operator $\Phi(p)$ acts as
\[ \Phi(p) f = \sum_i \Tr(p_{ii}) f = \sum_i \phi_i *_{\sigma} f = \phi *_{\sigma} f \]
for $f \in \ell^2(\Gamma)$. Thus we can describe $\cdim_M \Hip$ by describing $\phi$, and that is the content of the following theorem:

\begin{theorem}\label{thm:cvfd}
The Hilbert $\vN(\Gamma,\sigma)$-module $\Hip$ has scalar-valued von Neumann dimension equal to
\[ \dim_{\vN(\Gamma,\sigma)} \Hip = d_{\pi} \vol(G/\Gamma) \]
which is finite since $\Gamma$ is a lattice in $G$. Furthermore, the center-valued von Neumann dimension of $\Hip$ is the (possibly unbounded) operator on $\ell^2(\Gamma)$ given by $f \mapsto f *_{\sigma} \phi$ where $\phi \in \ell^\infty(\Gamma)$ is the function
\[ \phi(\gamma) = \begin{cases} \displaystyle  \frac{d_{\pi}}{|C_{\gamma}|} \int_{G/\Gamma_{\gamma}} \overline{\sigma(\gamma,y)}\sigma(y,y^{-1} \gamma y) V_{\eta} \eta(y^{-1} \gamma y) \dif{(y \Gamma_{\gamma})}   & \text{if $C_{\gamma}$ is $\sigma$-regular and finite,} \\ 0 & \text{otherwise.} \end{cases} \]
Here $C_{\gamma}$ is the conjugacy class of $\gamma$ in $\Gamma$, $\Gamma_{\gamma}$ is the centralizer of $\gamma$ in $\Gamma$, and $\eta$ is any unit vector in $\Hip$.
\end{theorem}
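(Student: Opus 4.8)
The plan is to reduce both assertions to computing, for each $\delta \in \Gamma$, the single scalar $\tau(\rho_{\overline{\sigma}}(\delta)\Phi(p))$. This suffices because, as explained just before the theorem, the scalar dimension is $\tau(\Phi(p)) = \tau(\rho_{\overline{\sigma}}(e)\Phi(p))$ while $\phi(\gamma) = \tau(\rho_{\overline{\sigma}}(\gamma)\Phi(p))$. The starting observation is that, since $d_{\pi}^{1/2}V_{\eta}$ is an isometry onto $V_{\eta}\Hip$, the projection $\tilde{p} = UpU^{*}$ onto $V_{\eta}\Hip$ equals $d_{\pi}V_{\eta}V_{\eta}^{*}$; concretely $\tilde{p}$ is the integral operator on $L^{2}(G)$ with bounded continuous kernel $\Theta(x,y) = d_{\pi}\langle \pi(y)\eta, \pi(x)\eta\rangle$.

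First I would establish the key identity
\[ \tau(\rho_{\overline{\sigma}}(\delta)\Phi(p)) = d_{\pi}\int_{B}\overline{\tilde{\sigma}(\delta,x)}\,V_{\eta}\eta(x^{-1}\delta x)\,\dif{x} \qquad (\delta\in\Gamma), \]
with $\tilde{\sigma}$ as in \eqref{eq:tilde_cocycle} and $B$ a fundamental domain for $\Gamma$. Writing $\Phi(p)=\sum_{i}\Tr(p_{ii})$ and using \eqref{eq:cvt_t} together with the traciality of $\tau$ to replace each central $\Tr(p_{ii})$ by $p_{ii}$, one gets $\tau(\rho_{\overline{\sigma}}(\delta)\Phi(p)) = \sum_{i}\tau(\rho_{\overline{\sigma}}(\delta)p_{ii})$. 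Using $\rho_{\overline{\sigma}}(\delta)^{*}\delta_{e}=\delta_{\delta}$ from \eqref{eq:delta_gamma} and the intertwiner $U$ of \Cref{prop:big_module_l2} (so $U(\delta_{\delta}\otimes e_{i}) = \lambda_{\sigma}^{G}(\delta)\tilde{e}_{i}$), each summand becomes $\langle \tilde{p}\,\tilde{e}_{i}, \lambda_{\sigma}^{G}(\delta)\tilde{e}_{i}\rangle$, and the total is the diagonal $B$-trace of the integral operator $\lambda_{\sigma}^{G}(\delta)^{*}\tilde{p}$, whose diagonal kernel is $\overline{\sigma(\delta,x)}\Theta(\delta x,x) = d_{\pi}\overline{\sigma(\delta,x)}\langle \pi(x)\eta, \pi(\delta x)\eta\rangle$. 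The conjugation identity \eqref{eq:projective_conjugation} then rewrites $\overline{\sigma(\delta,x)}\langle \pi(x)\eta, \pi(\delta x)\eta\rangle = \langle \eta, \pi(x)^{*}\pi(\delta)\pi(x)\eta\rangle = \overline{\tilde{\sigma}(\delta,x)}\,V_{\eta}\eta(x^{-1}\delta x)$, giving the identity. Specializing to $\delta=e$, where $\tilde{\sigma}(e,x)=1$ and $V_{\eta}\eta(e)=1$, yields $\dim_{\vN(\Gamma,\sigma)}\Hip = \tau(\Phi(p)) = d_{\pi}\vol(G/\Gamma)$, which is finite.

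It remains to identify the right-hand side with the stated $\phi(\gamma)$. Since $\Phi(p)\in\mathcal{Z}(N)$ for $N=\rvN(\Gamma,\overline{\sigma})$, \eqref{eq:cvt_t} gives $\phi(\gamma) = \tau(\Tr_{N}(\rho_{\overline{\sigma}}(\gamma))\Phi(p))$, where $\Tr_{N}$ is described by \eqref{eq:cvt_r} with $\sigma$ replaced by $\overline{\sigma}$. If $C_{\gamma}$ is infinite or not $\sigma$-regular this trace is $0$, so $\phi(\gamma)=0$, settling that case. If $C_{\gamma}=\{\beta_{j}^{-1}\gamma\beta_{j}\}_{j=1}^{k}$ is $\sigma$-regular and finite, \eqref{eq:cvt_r} gives $\phi(\gamma) = |C_{\gamma}|^{-1}\sum_{j}\overline{\tilde{\sigma}(\gamma,\beta_{j})}\,\tau(\rho_{\overline{\sigma}}(\gamma_{j})\Phi(p))$ with $\gamma_{j}=\beta_{j}^{-1}\gamma\beta_{j}$ (using $\overline{\sigma(\gamma,\beta_{j})}\sigma(\beta_{j},\gamma_{j}) = \overline{\tilde{\sigma}(\gamma,\beta_{j})}$). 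Substituting the key identity and collapsing $\overline{\tilde{\sigma}(\gamma,\beta_{j})}\,\overline{\tilde{\sigma}(\gamma_{j},x)} = \overline{\tilde{\sigma}(\gamma,\beta_{j}x)}$ via \eqref{eq:tilde1}, each term equals $\int_{\beta_{j}B}g$ for the integrand $g(y)=\overline{\tilde{\sigma}(\gamma,y)}V_{\eta}\eta(y^{-1}\gamma y)$. The final point is that $\sigma$-regularity of $\gamma$ makes $g$ left $\Gamma_{\gamma}$-invariant (from \eqref{eq:tilde1} and $\tilde{\sigma}(\gamma,\alpha)=1$ for $\alpha\in\Gamma_{\gamma}$), and that, $\{\beta_{j}\}$ being a transversal for $\Gamma_{\gamma}\backslash\Gamma$, the set $\bigsqcup_{j}\beta_{j}B$ is a fundamental domain for $\Gamma_{\gamma}$ in $G$; hence $\sum_{j}\int_{\beta_{j}B}g = \int_{G/\Gamma_{\gamma}}g$, which is exactly $|C_{\gamma}|d_{\pi}^{-1}\phi(\gamma)$.

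I expect two obstacles. The technical one is the passage from $\sum_{i}\langle \lambda_{\sigma}^{G}(\delta)^{*}\tilde{p}\,\tilde{e}_{i}, \tilde{e}_{i}\rangle$ to the integral of the kernel along the diagonal of $B$: this needs $P_{B}\lambda_{\sigma}^{G}(\delta)^{*}\tilde{p}\,P_{B}$ to be trace class with trace equal to its diagonal integral, which I would justify as in \cite{Be04} from the boundedness and continuity of $\Theta$, finiteness of $\vol(B)$, and the Hilbert--Schmidt estimate $\int_{G}\int_{B}|\Theta|^{2} = d_{\pi}\vol(B)<\infty$ coming from the orthogonality relation \eqref{eq:schur}. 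The conceptual one is the cocycle bookkeeping in the last step: three cocycle contributions must be reconciled, namely the weight from $\Tr_{N}$ in \eqref{eq:cvt_r}, the factor $\overline{\tilde{\sigma}(\gamma_{j},x)}$ from the key identity, and the vanishing $\tilde{\sigma}(\gamma,\alpha)=1$ underlying the $\Gamma_{\gamma}$-invariance; \Cref{lem:cocycle_conj} together with $\sigma$-regularity of $C_{\gamma}$ is precisely what makes them fit.
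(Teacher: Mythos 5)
Your proposal is correct and follows essentially the same route as the paper: the key identity $\sum_i \tau(\rho_{\overline{\sigma}}(\delta)p_{ii}) = d_{\pi}\int_B \overline{\tilde{\sigma}(\delta,y)}\,V_{\eta}\eta(y^{-1}\delta y)\,\dif{y}$ is the paper's \Cref{lem:integral_B}, the passage through $\Tr_N(\rho_{\overline{\sigma}}(\gamma))$ via \eqref{eq:cvt_t} and \eqref{eq:cvt_r}, the cocycle collapse via \eqref{eq:tilde1}, and the fundamental domain $\bigsqcup_j \beta_j B$ for $\Gamma_{\gamma}$ (\Cref{lem:centralizer_fundom}) all match. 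The only difference is that you justify the key identity by viewing $\tilde{p}=d_{\pi}V_{\eta}V_{\eta}^*$ as an integral operator and taking a diagonal trace over $B$, whereas the paper runs the equivalent double sum $\sum_{i,j}\langle \tilde{e_i},g_j\rangle\langle g_j,\lambda_{\sigma}^G(\gamma)\tilde{e_i}\rangle$ and justifies the interchange by absolute convergence, which sidesteps the trace-class technicalities you flag.
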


To prepare for the proof of \Cref{thm:cvfd} we need two lemmas.

\begin{lemma}\label{lem:integral_B}
The scalar-valued von Neumann dimension of $_M \Hip$ is given by $\dim_M \Hip = d_{\pi} \vol(G/\Gamma)$. Moreover, if $\Gamma$ is a lattice in $G$ and $\gamma \in \Gamma$ then
\begin{equation}
     \sum_i \tau( \rho_{\overline{\sigma}}^{\Gamma}(\gamma) p_{ii}) = d_{\pi} \int_B \overline{\sigma(\gamma,y)}\sigma(y,y^{-1} \gamma y) V_{\eta} \eta(y^{-1} \gamma y) \dif{y} , \label{eq:lemma_equation}
\end{equation}
where $B$ is a fundamental domain for $\Gamma$ in $G$.
\end{lemma}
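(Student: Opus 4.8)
The plan is to transport everything to $L^2(G)$ through the unitary $U$ of \Cref{prop:big_module_l2}, exploiting that the orthogonal projection $\tilde{p} = UpU^*$ of $L^2(G)$ onto $V_\eta\Hip$ equals $d_\pi V_\eta V_\eta^*$. Indeed $W := d_\pi^{1/2}V_\eta$ is an isometric intertwiner of $\pi$ and $\lambda_\sigma^G$, as established after \Cref{prop:si_unimodular}, so $\tilde p = WW^*$. Two auxiliary facts will be used repeatedly: the adjoint is the weak integral $V_\eta^* f = \int_G f(x)\pi(x)\eta\,\dif{x}$, and the intertwining relation $V_\eta\pi(x) = \lambda_\sigma^G(x)V_\eta$ has the adjoint form $V_\eta^*\lambda_\sigma^G(x) = \pi(x)V_\eta^*$.

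\emph{Reduction of the left-hand side.} First I would rewrite each summand in \eqref{eq:lemma_equation}. Here $p \in N\otimes\B(\mathcal{K})$ and each $p_{ii} \in N$, so $\tau(\rho_{\overline{\sigma}}^{\Gamma}(\gamma)p_{ii})$ is defined. By \eqref{eq:delta_gamma} we have $\rho_{\overline{\sigma}}^{\Gamma}(\gamma)^*\delta_e = \delta_\gamma$, whence, by traciality of $\tau$ and the defining relation of the matrix entries $p_{ij}$ of $p$, one gets $\tau(\rho_{\overline{\sigma}}^{\Gamma}(\gamma)p_{ii}) = \langle p_{ii}\delta_e,\delta_\gamma\rangle = \langle p(\delta_e\otimes e_i),\delta_\gamma\otimes e_i\rangle$. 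Using $U(\delta_e\otimes e_i) = \tilde{e_i}$ and $U(\delta_\gamma\otimes e_i) = \lambda_\sigma^G(\gamma)\tilde{e_i}$ (noted after \Cref{prop:big_module_l2}) together with $p = U^*\tilde p U$, this becomes $\langle\tilde p\,\tilde{e_i},\lambda_\sigma^G(\gamma)\tilde{e_i}\rangle$. Inserting $\tilde p = d_\pi V_\eta V_\eta^*$ and moving $V_\eta^*$ past $\lambda_\sigma^G(\gamma)$ reduces the summand to $d_\pi\langle\xi_i,\pi(\gamma)\xi_i\rangle$, where $\xi_i := V_\eta^*\tilde{e_i} = \int_B e_i(y)\pi(y)\eta\,\dif{y}$. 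Thus $\sum_i\tau(\rho_{\overline{\sigma}}^{\Gamma}(\gamma)p_{ii}) = d_\pi\sum_i\langle\xi_i,\pi(\gamma)\xi_i\rangle$, valid for every $\gamma\in\Gamma$ (including $\gamma=e$).

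\emph{The scalar dimension, then the general formula.} Expanding $\langle\xi_i,\pi(\gamma)\xi_i\rangle$ as an iterated integral over $B\times B$ produces the kernel $K_\gamma(x,y) = \overline{\sigma(\gamma,y)}\langle\pi(x)\eta,\pi(\gamma y)\eta\rangle$, which is continuous because $\pi$ is strongly continuous. Taking $\gamma=e$ first, all summands $\|\xi_i\|^2$ are nonnegative, so monotone convergence and the completeness relation $\sum_i e_i(x)\overline{e_i(y)}$ of the orthonormal basis of $L^2(B)$ collapse the double integral onto its diagonal, giving $\sum_i\|\xi_i\|^2 = \int_B K_e(y,y)\,\dif{y} = \int_B\|\pi(y)\eta\|^2\,\dif{y} = \vol(G/\Gamma)$ since $\|\eta\|=1$. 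Consequently $\dim_M\Hip = \sum_i\tau(p_{ii}) = d_\pi\sum_i\|\xi_i\|^2 = d_\pi\vol(G/\Gamma)$, finite precisely because $\Gamma$ is a lattice. With $\sum_i\|\xi_i\|^2<\infty$ now in hand, the same collapse is legitimate for general $\gamma$ by absolute convergence and Fubini, yielding $\sum_i\langle\xi_i,\pi(\gamma)\xi_i\rangle = \int_B K_\gamma(y,y)\,\dif{y}$. Finally, \eqref{eq:projective_conjugation} and the identities of \Cref{lem:cocycle_conj} recast $K_\gamma(y,y) = \overline{\sigma(\gamma,y)}\langle\pi(y)\eta,\pi(\gamma y)\eta\rangle$ as the integrand displayed in \eqref{eq:lemma_equation}, completing the proof.

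The one genuine obstacle is the diagonal collapse of the sum over the orthonormal basis: rigorously this is the statement that the trace-class integral operator on $L^2(B)$ with continuous kernel $K_\gamma$ has trace equal to $\int_B K_\gamma(y,y)\,\dif{y}$, and it is the finiteness $\sum_i\|\xi_i\|^2 = \vol(G/\Gamma)<\infty$ that simultaneously furnishes trace-class-ness and licenses the interchange of summation and integration; for $\gamma=e$ one can bypass any a priori finiteness assumption by invoking positivity and monotone convergence, which is what breaks the apparent circularity between the dimension computation and the convergence of the general sum. Everything else is either a direct application of \Cref{prop:big_module_l2} or routine $2$-cocycle bookkeeping via \Cref{lem:cocycle_conj} and \eqref{eq:projective_conjugation}.
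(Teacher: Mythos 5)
Your proposal is correct and follows the same skeleton as the paper's proof: reduce $\tau(\rho_{\overline{\sigma}}^{\Gamma}(\gamma)p_{ii})$ to $\langle \tilde{p}\,\tilde{e_i},\lambda_{\sigma}^G(\gamma)\tilde{e_i}\rangle$ via $U$ and \eqref{eq:delta_gamma}, establish finiteness from the positive case $\gamma=e$ (which is itself the computation of $d_{\pi}\vol(G/\Gamma)$), and then interchange the sum over $i$ with an integral over $B$ to land on \eqref{eq:lemma_equation}. The only real difference is how the interchange is organized. The paper expands $\tilde{p}=\sum_j g_j\langle\,\cdot\,,g_j\rangle$ in the orthonormal basis $g_j=d_{\pi}^{1/2}V_{\eta}\eta_j$ of $V_{\eta}\Hip$ and applies Fubini to the double series $\sum_{i,j}\langle\tilde{e_i},g_j\rangle\langle g_j,\lambda_{\sigma}^G(\gamma)\tilde{e_i}\rangle$, with absolute convergence supplied by Cauchy--Schwarz and the Parseval identity $\sum_i\|\tilde{p}\tilde{e_i}\|^2=\sum_j\|qg_j\|^2=d_{\pi}\vol(G/\Gamma)$; you instead write $\tilde{p}=d_{\pi}V_{\eta}V_{\eta}^*$ and compute $\sum_i\langle\xi_i,\pi(\gamma)\xi_i\rangle$ as the trace of an integral operator on $L^2(B)$. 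These are the same argument in different clothing, and your flagged ``one genuine obstacle'' sits exactly where the paper's Cauchy--Schwarz estimate lives. One caution on your justification of that step: the blanket claim that a trace-class integral operator with continuous kernel has trace equal to the diagonal integral is delicate and should not be invoked as folklore, and ``absolute convergence and Fubini'' applied naively to $\sum_i\iint e_i(x)\overline{e_i(y)}K_{\gamma}(x,y)$ does not work, since $\sum_i|e_i(x)||e_i(y)|$ diverges pointwise. The clean fix, implicit in your factorization, is to view $F\colon L^2(B)\to\Hip$, $Ff=\int_B f(y)\pi(y)\eta\,\dif{y}$, as a Hilbert--Schmidt operator (this is the $\gamma=e$ finiteness) and use the Hilbert--Schmidt inner-product identity $\sum_i\langle Fe_i,\pi(\gamma)Fe_i\rangle=\int_B\langle\pi(y)\eta,\pi(\gamma)\pi(y)\eta\rangle\,\dif{y}$, which is Parseval in $L^2(B,\Hip)$ --- and is precisely the paper's double-sum computation in disguise. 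With that patch your argument is complete; the remaining cocycle bookkeeping via \eqref{eq:projective_conjugation} matches the paper.
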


\begin{proof}
We use the notation of \Cref{prop:big_module_l2} and the following dicsussion. Thus we let $(e_i)_i$ be an orthonormal basis for $\mathcal{K} = L^2(B)$, and we denote by $\tilde{e_i}$ the extension by zero of $e_i$ to all of $G$. Furthermore, let $(\eta_j)_j$ be an orthonormal basis for $\Hip$. Let $\eta \in \Hip$ have unit norm and set $g_j = d_{\pi}^{1/2} V_{\eta} \eta_j$. Then $(g_j)_j$ is an orthonormal basis for $V_{\eta} \Hip$ since $d_{\pi}^{1/2} V_{\eta}$ is an isometry. As before $\tilde{p}$ denotes the projection of $L^2(G)$ onto $V_{\eta} \Hip$. We also let $q$ denote the projection of $L^2(G)$ onto $U(\delta_e \otimes \mathcal{K})$, which has orthonormal basis $(\tilde{e_i})_{i=1}^\infty$.

We claim that the series
\begin{equation}
    \sum_{i,j} \langle \tilde{e_i}, g_j \rangle \langle g_j, \lambda_{\sigma}^G(\gamma) \tilde{e_i} \rangle \label{eq:abs_conv}
\end{equation}
is absolutely convergent when $\Gamma$ is a lattice in $G$. Indeed, using Cauchy--Schwarz we obtain
\begin{align*}
    \sum_{i,j} |\langle \tilde{e_i}, g_j \rangle \langle g_j, \lambda_{\sigma}^G(\gamma) \tilde{e_i} \rangle| &\leq \sum_i \left( \sum_j | \langle \tilde{e_i}, g_j \rangle |^2 \right)^{1/2} \left( \sum_j | \langle g_j, \lambda_{\sigma}^G(\gamma) \tilde{e_i} \rangle |^2 \right)^{1/2}  \\
    &= \sum_i \| \tilde{p} \tilde{e_i} \| \| \tilde{p} \lambda_{\sigma}^G(\gamma) \tilde{e_i} \| = \sum_i \| \tilde{p} \tilde{e_i} \|^2 = \sum_j \sum_i | \langle \tilde{e_i}, g_j \rangle |^2 \\
    &= \sum_j \| q( g_j) \|^2 = \sum_j \int_B | g_j(y) |^2 \dif{y} = d_{\pi} \sum_j \int_B | \langle \eta_j, \pi(y) \eta \rangle |^2 \dif{y} \\
    &= d_{\pi} \int_B \sum_j | \langle \eta_j, \pi(y) \eta \rangle |^2 \dif{y} = d_{\pi} \int_B \| \pi(y) \eta \|^2 \dif{y} \\
    &= d_{\pi} \vol(G/\Gamma) < \infty.
\end{align*}
Thus, we can compute the sum in which order we like. Summing over $j$ first we obtain
\begin{align*}
    \sum_{i,j}  \langle \tilde{e_i}, g_j \rangle \langle g_j, \lambda_{\sigma}^G(\gamma) \tilde{e_i} \rangle &= \sum_i \left\langle \sum_j \langle \tilde{e_i}, g_j \rangle g_j , \lambda_{\sigma}^G(\gamma) \tilde{e_i} \right\rangle = \sum_i \langle \tilde{p} \tilde{e_i}, \lambda_{\sigma}^G(\gamma) \tilde{e_i} \rangle \\
    &= \sum_i \langle p (\delta_e \otimes e_i), \delta_{\gamma} \otimes e_i \rangle = \sum_i \langle p_{ii} \delta_e, \delta_{\gamma} \rangle = \sum_i \tau( \rho_{\overline{\sigma}}^{\Gamma}(\gamma) p_{ii}) .
\end{align*}
If we sum over $i$ first instead, we obtain
\begin{align*}
    \sum_{i,j}  \langle \tilde{e_i}, g_j \rangle \langle g_j, \lambda_{\sigma}^G(\gamma) \tilde{e_i} \rangle &= \sum_j \left\langle \lambda_{\sigma}^G(\gamma)^* g_j, \sum_i \langle g_j, \tilde{e_i} \rangle \tilde{e_i} \right\rangle = \sum_j \langle \lambda_{\sigma}^G(\gamma)^* g_j, q(g_j) \rangle \\
    &= \sum_j \int_B \lambda_{\sigma}^G(\gamma)^* g_j(y) \overline{g_j(y)} \dif{y} = d_{\pi} \sum_j \int_B \langle \pi(\gamma)^* \eta_j, \pi(y) \eta \rangle \overline{ \langle \eta_j , \pi(y) \eta \rangle } \dif{y} \\
    &= d_{\pi}  \int_B \left\langle \sum_j \langle \pi(y) \eta, \eta_j \rangle \eta_j, \pi(\gamma)\pi(y) \eta \right\rangle \dif{y} \\
    &= d_{\pi} \int_B \langle \pi(y) \eta, \sigma(\gamma,y) \pi(\gamma y) \eta \rangle \dif{y} \\
    &= d_{\pi} \int_B \overline{ \sigma(\gamma,y)} \sigma( y, y^{-1} \gamma y ) V_{\eta} \eta(y^{-1} \gamma y) \dif{y} .
\end{align*}
This proves \eqref{eq:lemma_equation}. In particular, when $\gamma = e$ we get
\[ \dim_M \Hip = \sum_i \tau(p_{ii}) = d_{\pi} \int_B \overline{ \sigma(\gamma,e) }  \sigma( y, y^{-1} e y) V_{\eta} \eta(y^{-1} e y) \dif{y} = d_{\pi}\| \eta \| \int_B \dif{y} = d_{\pi} \vol(G/\Gamma). \]
\end{proof}

\begin{lemma}\label{lem:centralizer_fundom}
Let $B$ be a fundamental domain for $\Gamma$ in $G$ and suppose $\gamma \in \Gamma$ is such that the conjugacy class $C_{\gamma}$ is $\sigma$-regular and finite, say $C_{\gamma} = \{ \beta_1^{-1} \gamma \beta_1 ,\ldots, \beta_k^{-1} \gamma \beta_k \}$. Then
\[ \tilde{B} = \bigcup_{j=1} \beta_j B \]
is a fundamental domain for the $\Gamma_{\gamma}$ (the centalizer of $\gamma$ in $\Gamma$) in $G$.
\end{lemma}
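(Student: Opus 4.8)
The plan is to identify $\{\beta_1,\dots,\beta_k\}$ as a complete and irredundant set of right coset representatives for $\Gamma_{\gamma}$ in $\Gamma$, and then to reorganize the partition of $G$ furnished by $B$ accordingly. First I would invoke the orbit--stabilizer principle for the conjugation action of $\Gamma$ on itself: the map $\Gamma_{\gamma}\beta \mapsto \beta^{-1}\gamma\beta$ is a well-defined bijection from the right coset space $\Gamma_{\gamma}\backslash\Gamma$ onto the conjugacy class $C_{\gamma}$. Both well-definedness and injectivity follow from the observation that $\beta^{-1}\gamma\beta = \beta'^{-1}\gamma\beta'$ holds if and only if $\beta'\beta^{-1}$ commutes with $\gamma$, i.e.\ if and only if $\Gamma_{\gamma}\beta = \Gamma_{\gamma}\beta'$; surjectivity is immediate. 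Since $|C_{\gamma}| = k$, this shows $[\Gamma : \Gamma_{\gamma}] = k$ and that the chosen $\beta_1,\dots,\beta_k$ lie in pairwise distinct right cosets, hence
\[ \Gamma = \bigsqcup_{j=1}^{k} \Gamma_{\gamma}\beta_j . \]
Note that $\sigma$-regularity plays no role in this lemma; only the finiteness of $C_{\gamma}$ is used.

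Next I would feed this decomposition into the defining partition $G = \bigsqcup_{\gamma' \in \Gamma} \gamma' B$. Writing each $\gamma' \in \Gamma$ uniquely as $\gamma' = \delta\beta_j$ with $\delta \in \Gamma_{\gamma}$ and $1 \leq j \leq k$, the partition reorganizes as
\[ G = \bigsqcup_{\delta \in \Gamma_{\gamma}} \bigsqcup_{j=1}^{k} \delta\beta_j B = \bigsqcup_{\delta \in \Gamma_{\gamma}} \delta\Big( \bigcup_{j=1}^{k} \beta_j B \Big) = \bigsqcup_{\delta \in \Gamma_{\gamma}} \delta\tilde{B}, \]
which is precisely the assertion that $\tilde{B}$ is a fundamental domain for $\Gamma_{\gamma}$ in $G$. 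Along the way I would record that the union $\tilde{B} = \bigcup_{j=1}^{k}\beta_j B$ is in fact disjoint: the $\beta_j$ are distinct elements of $\Gamma$, so the sets $\beta_j B$ are distinct blocks of the partition $\{\gamma' B\}$ and hence pairwise disjoint.

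There is no genuine obstacle here; the argument is purely combinatorial once the coset bookkeeping is in place. The one point that demands care is the left/right bookkeeping: one must verify that the fibers of $\beta \mapsto \beta^{-1}\gamma\beta$ are the right cosets $\Gamma_{\gamma}\beta$ (rather than left cosets), so that the reindexing $\gamma' = \delta\beta_j$ with $\delta$ ranging over $\Gamma_{\gamma}$ is compatible with the left-translation convention $\{\gamma B\}$ used to define fundamental domains in \Cref{subsec:subgroups_lcgroups}. Getting this orientation right is exactly what guarantees that the outer disjoint union in the display above is indexed on the left by $\Gamma_{\gamma}$, as required.
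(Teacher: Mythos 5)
Your proof is correct and rests on exactly the same fact as the paper's: that $\beta_1,\dots,\beta_k$ form a transversal for $\Gamma_{\gamma}$ in $\Gamma$ (equivalently, that $\beta_i^{-1}\gamma\beta_i=\beta_j^{-1}\gamma\beta_j$ forces $\Gamma_{\gamma}\beta_i=\Gamma_{\gamma}\beta_j$), which the paper uses implicitly while verifying disjointness and covering separately by element chasing. Your packaging via the explicit decomposition $\Gamma=\bigsqcup_j\Gamma_{\gamma}\beta_j$ is a clean reorganization of the same argument, and your observation that $\sigma$-regularity is not actually used is accurate.
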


\begin{proof}
First suppose that $\tilde{B} \cap \gamma'\tilde{B} \neq \emptyset$ for some $\gamma' \in \Gamma_{\gamma}$. Then there exist $1 \leq i,j \leq k$ such that $\beta_i B \cap \gamma' \beta_j B \neq \emptyset$. Using that $B$ is a fundamental domain for $\Gamma$ in $G$ this implies that $\beta_i = \gamma' \beta_j$. But then
\[ \beta_i^{-1} \gamma \beta_i = \beta_j^{-1} \gamma'^{-1} \gamma \gamma' \beta_j = \beta_j^{-1} \gamma \beta_j .\]
This forces $i=j$ which gives $B \cap \gamma' B \neq \emptyset$. This can only happen when $\gamma' = e$.

Let $x \in G$. Using that $B$ is a fundamental domain for $\Gamma$ in $G$ we can write $x = \gamma'' y$ where $\gamma'' \in \Gamma$ and $y \in B$. There exists $1 \leq j \leq k$ such that $\gamma''^{-1} \gamma \gamma'' = \beta_j^{-1} \gamma \beta_j$. But then $\gamma'' \beta_j^{-1} \in \Gamma_{\gamma}$ so $\gamma'' = \gamma' \beta_j$ for some $\gamma' \in \Gamma_{\gamma}$. Hence $x = \gamma' (\beta_j y) \in \gamma' \tilde{B}$.
\end{proof}

\begin{proof}[Proof of \Cref{thm:cvfd}]
By \Cref{lem:integral_B} we know that the scalar-valued von Neumann dimension of $_M \Hip$ is equal to $d_{\pi} \vol(G/\Gamma)$. Using the relation between $\tau$ and the center-valued trace in \eqref{eq:cvt_t} we have that
\begin{align*}
    \phi(\gamma) = \tau(\rho_{\overline{\sigma}}(\gamma) \Phi(p)) = \sum_i \tau( \rho_{\overline{\sigma}}(\gamma) \Tr(p_{ii})) = \sum_i \tau( \Tr(\rho_{\overline{\sigma}}(\gamma)) p_{ii} ) .
\end{align*}
Using the formula for the center-valued trace on $N$ \eqref{eq:cvt_r} (and making sure to conjugate the 2-cocycle) as well as \Cref{lem:integral_B} we obtain $\phi(\gamma) = 0$ when $C_{\gamma}$ is infinite or not $\sigma$-regular. When $C_{\gamma}$ is both finite and $\sigma$-regular we obtain
\begin{align*}
    \phi(\gamma) &= |C_{\gamma}|^{-1} \sum_i \sum_{j=1}^k \overline{ \tilde{\sigma}(\gamma, \beta_j)} \tau ( \rho_{\overline{\sigma}}^{\Gamma}(\beta_j^{-1} \gamma \beta_j)  p_{ii}) \\
    &= d_{\pi} |C_{\gamma}|^{-1} \sum_{j=1}^k \overline{ \tilde{\sigma}(\gamma, \beta_j)} \int_B \overline{ \tilde{\sigma}( \beta_j^{-1} \gamma \beta_j, y) } V_{\eta} \eta (y^{-1} \beta_j^{-1} \gamma \beta_j y) \dif{y} \\
    &= d_{\pi} |C_{\gamma}|^{-1} \sum_{j=1}^k \int_B \overline{ \tilde{\sigma}(  \gamma, \beta_j y ) } V_{\eta} \eta (y^{-1} \beta_j^{-1} \gamma \beta_j y) \dif{y} \\
    &= d_{\pi} |C_{\gamma}|^{-1} \sum_{j=1}^k \int_{ \beta_j B} \overline{ \tilde{\sigma}(  \gamma, y ) } V_{\eta} \eta (y^{-1} \gamma y) \dif{y} .
\end{align*}
Hence, using the definition of $\tilde{B}$ from \Cref{lem:centralizer_fundom}, we obtain
\begin{align*}
    \phi(\gamma) &= d_{\pi} |C_{\gamma}|^{-1} \int_{\tilde{B}} \overline{ \tilde{\sigma}(\gamma, y) } V_{\eta} \eta(y^{-1} \gamma y) \dif{y} .
\end{align*}
Note that the function $y \mapsto \tilde{\sigma}(\gamma, \beta_j^{-1}y) V_{\eta} \eta(y^{-1} \gamma y)$ is left $\Gamma_{\gamma}$-invariant. Indeed, if $\gamma' \in \Gamma_{\gamma}$ then $\tilde{\sigma}(\gamma,\gamma') =1$ since $\gamma$ is $\sigma$-regular, so
\begin{align*}
    \overline{ \tilde{\sigma}(\gamma,  \gamma' y) } V_{\eta} \eta(y^{-1} \gamma'^{-1} \gamma \gamma' y) &= \overline{ \tilde{\sigma}(\gamma, \gamma') \sigma( \gamma'^{-1} \gamma \gamma', y) } V_{\eta} \eta(y^{-1} \gamma y) \\
    &= \overline{ \tilde{\sigma}(\gamma,y) } V_{\eta} \eta( y^{-1} \gamma y) .
\end{align*}
Thus, since $\tilde{B}$ is a fundamental domain for $\Gamma_{\gamma}$ in $G$ by \Cref{lem:centralizer_fundom}, we can integrate over $G/\Gamma_{\gamma}$ instead of $\tilde{B}$. This leaves us with the formula in \Cref{thm:cvfd}, finishing the proof.
\end{proof}

For 2-cocycles on abelian groups satisfying Kleppner's condition the center-valued von Neumann dimension takes a particularly simple form:

\begin{corollary}\label{cor:abelian_cdim}
Suppose that $G$ is abelian and that $(G,\sigma)$ satisfies Kleppner's condition. Then the center-valued von Neumann dimension of $_{\vN(\Gamma,\sigma)}\Hip$ is given by
\[ \cdim_{\vN(\Gamma,\sigma)} \Hip = d_{\pi} \vol(G/\Gamma) I .\]
\end{corollary}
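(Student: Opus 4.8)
The plan is to specialize the formula for $\phi$ in \Cref{thm:cvfd} to the abelian setting and show that it collapses to a multiple of $\delta_e$. Since $G$ is abelian, so is $\Gamma$, and hence for every $\gamma \in \Gamma$ the conjugacy class $C_{\gamma} = \{\gamma\}$ is a singleton (so finite, with $|C_{\gamma}| = 1$), the centralizer $\Gamma_{\gamma}$ equals all of $\Gamma$, and $y^{-1}\gamma y = \gamma$ for every $y \in G$. Consequently the matrix coefficient $V_{\eta}\eta(y^{-1}\gamma y) = \langle \eta, \pi(\gamma)\eta\rangle$ is independent of $y$ and may be pulled out of the integral. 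Writing $\beta(\gamma,y) = \sigma(\gamma,y)\overline{\sigma(y,\gamma)}$, which is a bicharacter on $G$ by the remark following \Cref{lem:cocycle_conj}, the integrand $\overline{\sigma(\gamma,y)}\sigma(y,y^{-1}\gamma y)$ becomes $\overline{\beta(\gamma,y)}$, so that for $\gamma$ which is $\sigma$-regular in $\Gamma$ we obtain
\[ \phi(\gamma) = d_{\pi}\, \langle \eta, \pi(\gamma)\eta\rangle \int_{G/\Gamma} \overline{\beta(\gamma,y)} \, \dif{(y\Gamma)}, \]
and $\phi(\gamma) = 0$ otherwise.

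Next I would evaluate the remaining integral using harmonic analysis on $G/\Gamma$. Since $\Gamma$ is a lattice its fundamental domain has finite measure, so $G/\Gamma$ is a group (as $\Gamma$ is normal in the abelian $G$) of finite Haar measure, hence a compact abelian group. For $\sigma$-regular $\gamma$ the character $\beta(\gamma,\cdot)$ is trivial on $\Gamma$, so it descends to a character of $G/\Gamma$; by orthogonality of characters its integral over the compact group $G/\Gamma$ equals $\vol(G/\Gamma)$ if this descended character is trivial and $0$ otherwise. The descended character is trivial precisely when $\beta(\gamma,y) = 1$ for all $y \in G$, i.e.\ when $\gamma$ is $\sigma$-regular as an element of $G$. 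Thus $\phi(\gamma) = d_{\pi}\vol(G/\Gamma)\langle\eta,\pi(\gamma)\eta\rangle$ if $\gamma$ is $\sigma$-regular in $G$, and $\phi(\gamma) = 0$ for every other $\gamma$.

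Now I would invoke Kleppner's condition for $(G,\sigma)$: as every conjugacy class in the abelian group $G$ is finite, this condition says that the only $\sigma$-regular element of $G$ is $e$. Hence $\phi(\gamma) = 0$ for all $\gamma \neq e$, while $\phi(e) = d_{\pi}\vol(G/\Gamma)$ since $\pi(e) = I$ and $\|\eta\| = 1$. Therefore $\phi = d_{\pi}\vol(G/\Gamma)\,\delta_e$, and, as $\delta_e$ is the unit for $\sigma$-twisted convolution (using $\sigma(e,\cdot) = \sigma(\cdot,e) = 1$), the operator $f \mapsto f *_{\sigma} \phi$ of \Cref{thm:cvfd} is exactly $d_{\pi}\vol(G/\Gamma)\,I$.

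I expect the main subtlety to be the distinction between $\sigma$-regularity relative to $\Gamma$ and relative to $G$: the defining formula for $\phi$ only detects $\sigma$-regularity in $\Gamma$, and it is the character-orthogonality argument over the compact quotient $G/\Gamma$ that upgrades this to $\sigma$-regularity in all of $G$. This is precisely why the hypothesis is Kleppner's condition for $(G,\sigma)$ rather than for $(\Gamma,\sigma)$, and keeping the two notions of regularity straight is the \emph{crux} of the argument.
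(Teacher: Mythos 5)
Your proof is correct and follows essentially the same route as the paper: specialize the formula of \Cref{thm:cvfd} to the abelian case, recognize $y\Gamma \mapsto \overline{\sigma(\gamma,y)}\sigma(y,\gamma)$ as a character on the compact group $G/\Gamma$, and use Kleppner's condition for $(G,\sigma)$ to conclude the integral vanishes unless $\gamma = e$. Your explicit separation of $\sigma$-regularity in $\Gamma$ (which the formula for $\phi$ already enforces) from $\sigma$-regularity in $G$ (which the character-orthogonality step detects) is a point the paper's write-up passes over more quickly, but it is the same argument.
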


\begin{proof}
When $G$ is abelian we have that $C_{\gamma} = \{\gamma \}$ and $\Gamma_{\gamma} = \Gamma$ for every $\gamma \in \Gamma$. Hence the expression in \Cref{thm:cvfd} collapses to
\begin{align*}
    \phi(\gamma) &= d_{\pi} \int_{G/\Gamma} \overline{ \tilde{\sigma}(\gamma,y)} V_{\eta}\eta(y^{-1} \gamma y) \dif{(y\Gamma)} \\
    &= d_{\pi} \langle \eta, \pi(\gamma) \eta \rangle \int_{G/\Gamma} \overline{ \sigma(\gamma, y) }  \sigma(y, \gamma)  \dif{(y \Gamma)} .
\end{align*}
The map $y\Gamma \mapsto \overline{ \sigma(\gamma, y) }  \sigma(y, \gamma) $ is a character on $G/\Gamma$ by \Cref{lem:cocycle_conj}. Since $(G,\sigma)$ is assumed to satisfy Kleppner's condition, this character is trivial if and only if $\gamma = e$. Hence
\[ \phi(\gamma) = d_{\pi} \langle \eta, \pi(\gamma) \eta \rangle \vol(G/\Gamma) \delta_{\gamma,e} = d_{\pi} \vol(G/\Gamma) \delta_{\gamma,e} .\]
Since $\cdim_M \Hip$ is uniquely determined by $\phi$ it follows that
\[ \cdim_{\vN(\Gamma,\sigma)} \Hip = d_{\pi} \vol(G/\Gamma) I .\]
\end{proof}

\section{Applications to frame theory}\label{sec:frame_theory}

\subsection{Frames and Riesz sequences}\label{subsec:frames_riesz_sequences}

Let $\mathcal{H}$ be a (complex) Hilbert space and $J$ an index set. A family $(e_j)_{j \in J}$ in $\mathcal{H}$ is a \emph{frame} for $\mathcal{H}$ if there exist constants $A,B > 0$ such that
\begin{equation*}
    A \| \xi \|^2 \leq \sum_{j \in J} | \langle \xi, e_j \rangle |^2 \leq B \| \xi \|^2 \;\;\; \text{for all $\xi \in \mathcal{H}$.}
\end{equation*}
The numbers $A$ and $B$ are called lower and upper frame bounds, respectively. If one can choose $A=B=1$ in the above equation, the frame $(e_j)_{j \in J}$ is called \emph{Parseval}.

Associated to a frame $(e_j)_{j \in J}$ is the \emph{analysis operator}, which is the injective bounded linear operator $C \colon \Hi \to \ell^2(\Gamma)$ given by
\[ C \xi = (\langle \xi, e_j \rangle )_{j \in J} \;\;\; \text{for $\xi \in \Hi$}. \]
When the frame is Parseval, the analysis operator is an isometry. The \emph{frame operator} is the positive invertible operator $S = C^*C \in \B(\Hi)$ and the associated family $(S^{-1/2} e_j)_j$ is a Parseval frame. Conversely, if $C \colon \Hi \to \ell^2(J)$ is an isometry, then we obtain a Parseval frame $(e_j)_{j \in J}$ in $\Hi$ where $C e_j$ is the orthogonal projection of $\delta_j \in \ell^2(J)$ onto the subspace $C \Hi \subseteq \ell^2(J)$. Thus, the existence of a frame in $\Hi$ indexed by $J$ is equivalent to the existence of an isometry $\Hi \to \ell^2(J)$.

The dual notion to a frame is that of a Riesz sequence. A family $(e_j)_{j \in J}$ is called a \emph{Riesz sequence} for $\mathcal{H}$ if there exist constants $A,B > 0$ such that
\begin{equation*}
    A \| c \|_2^2 \leq \Big\| \sum_{j \in J} c_j e_j \Big\|^2 \leq B \| c \|_2^2 \;\;\; \text{for all $c = (c_j)_j \in \ell^2(J)$.}
\end{equation*}
The numbers $A$ and $B$ are called lower and upper Riesz bounds, respectively. Note that an orthonormal family is precisely a Riesz sequence for which one can choose $A=B=1$. A family $(e_j)_j$ that is both a frame and a Riesz sequence is called a \emph{Riesz basis}.

Associated to a Riesz sequence is the \emph{synthesis operator} $D  \colon \ell^2(J) \to \Hi$ given by
\begin{align*}
     D(c_j)_j = \sum_j c_j e_j \;\;\; \text{for $(c_j)_j \in \ell^2(J)$,}
\end{align*}
which is an injective bounded linear operator. It is isometric when $(e_j)_j$ is orthonormal. A Riesz sequence is always a Riesz basis (in particular a frame) for its closed linear span $\mathcal{K} = \clspn \{ e_j : j \in J \}$, so the restriction $S|_{\mathcal{K}}$ of its frame operator $S$ to $\mathcal{K}$ is invertible. The associated family $(S^{-1/2}e_j)_{j \in J}$ is then orthonormal. Conversely, if $D \colon \ell^2(J) \to \Hi$ is an isometry, then $(D \delta_j)_{j \in J}$ is orthonormal. This shows that the existence of a Riesz sequence in $\Hi$ indexed by $J$ is equivalent to the existence of an isometry $\ell^2(J) \to \Hi$.

\subsection{Multiwindow super systems}\label{subsec:multi_super}

Let $(\pi, \Hip)$ be a $\sigma$-projective unitary representation of a locally compact group $G$ and let $\Gamma$ be a lattice in $G$. We will be interested in frames and Riesz sequences for $\Hip$ of the form
\[ \pi(\Gamma) \eta = ( \pi(\gamma) \eta )_{\gamma \in \Gamma}  \]
for vectors $\eta \in \Hip$. More generally, we define the $n$-multiwindow $d$-super system associated to a matrix $(\eta_{i,j})_{i,j=1}^{n,d}$ of vectors in $\Hip$ to be the $\Gamma \times \{ 1, \ldots, n \}$-indexed family
\begin{equation}
    \Big( ( \pi(\gamma) \eta_{i,1}, \ldots, \pi(\gamma) \eta_{i,d} )  \Big)_{ \gamma \in \Gamma, 1 \leq i \leq n} \label{eq:multi_super}
\end{equation}
in $\Hip^d$. If an $n$-multiwindow $d$-super system is a frame for $\Hip^d$ we call it an \emph{$n$-multiwindow $d$-super frame}. We will say that $(\pi,\Gamma)$ \emph{admits} an $n$-multiwindow $d$-super frame if there exists an $n$-multiwindow $d$-super frame of the form \eqref{eq:multi_super} for some $(\eta_{i,j})_{i,j=1}^{n,d}$, and we call $(\eta_{i,j})_{i,j=1}^{n,d}$ the generators of the frame. We make analogous definitions for Riesz sequences and Riesz bases.

If $d=1$ we obtain the $n$-multiwindow system $( \pi(\gamma) \eta_i  )_{ \gamma \in \Gamma, 1 \leq i \leq n }$ and if $n=1$ we obtain the $d$-super system $\Big( (\pi(\gamma) \eta_1, \ldots, \pi(\gamma) \eta_n) \Big)_{\gamma \in \Gamma }$. If both $n=d=1$ we recover the system $\pi(\Gamma) \eta$.

We need the following representation-theoretic characterizations of the existence of multiwindow super frames and Riesz sequences.

\begin{proposition}\label{prop:frame_riesz_isometries}
The following are equivalent:
\begin{enumerate}
    \item $(\pi,\Gamma)$ admits an $n$-multiwindow $d$-super frame (resp.\ $n$-multiwindow $d$-super Riesz sequence) (resp.\ $n$-multiwindow $d$-super Riesz basis).
    \item $(\pi,\Gamma)$ admits a $n$-multiwindow $d$-super Parseval frame (resp.\ $n$-multiwindow $d$-super orthonormal sequence) (resp.\ $n$-multiwindow $d$-super orthonormal basis).
    \item There exists $\Gamma$-invariant isometry $\Hip^d \to \ell^2(\Gamma)^n$ (resp.\ $\Gamma$-invariant isometry $\ell^2(\Gamma)^n \to \Hip^d$) (resp.\ $\Gamma$-invariant unitary map $\Hip^d \to \ell^2(\Gamma)^n$).
\end{enumerate}
\end{proposition}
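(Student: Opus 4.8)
The plan is to recast both the multiwindow super systems and condition (3) in terms of a single object---the analysis operator---and to let the intertwining property of the wavelet transform carry all of the representation theory. Write $\pi^{\oplus d}$ for the diagonal action $\pi^{\oplus d}(\gamma)(\xi_1,\dots,\xi_d) = (\pi(\gamma)\xi_1,\dots,\pi(\gamma)\xi_d)$ of $\Gamma$ on $\Hip^d$, and $(\lambda_{\sigma}^{\Gamma})^{\oplus n}$ for the diagonal action of $\lambda_{\sigma}^{\Gamma}$ on $\ell^2(\Gamma)^n \cong \ell^2(\Gamma \times \{1,\dots,n\})$; by a \emph{$\Gamma$-invariant} operator between these spaces I mean one intertwining these two representations. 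The key preliminary observation is that the analysis operator of the multiwindow super system generated by $(\eta_{i,j})$, namely $C(\xi_1,\dots,\xi_d)(\gamma,i) = \sum_{j=1}^{d} \langle \xi_j, \pi(\gamma)\eta_{i,j}\rangle = \sum_{j=1}^{d} V_{\eta_{i,j}}\xi_j(\gamma)$, is $\Gamma$-invariant: this follows directly from the relation $V_{\eta}\circ \pi(\gamma) = \lambda_{\sigma}^{\Gamma}(\gamma)\circ V_{\eta}$ established in the discussion following \Cref{prop:si_unimodular}, restricted to the lattice. Conversely, every bounded $\Gamma$-invariant operator $T\colon \Hip^d \to \ell^2(\Gamma)^n$ is itself the analysis operator of the multiwindow super system with generators $(\eta_{i,1},\dots,\eta_{i,d}) = T^{*}\delta_{(e,i)}$, because $T^{*}$ intertwines $(\lambda_{\sigma}^{\Gamma})^{\oplus n}$ with $\pi^{\oplus d}$ and $\delta_{(\gamma,i)} = (\lambda_{\sigma}^{\Gamma})^{\oplus n}(\gamma)\delta_{(e,i)}$ by \eqref{eq:delta_gamma}. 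This yields a dictionary between multiwindow super systems and bounded $\Gamma$-invariant operators $\Hip^d \to \ell^2(\Gamma)^n$.

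Granting this dictionary, the equivalence (2)$\Leftrightarrow$(3) becomes a direct translation of the standard facts recalled in \Cref{subsec:frames_riesz_sequences}: a family is a Parseval frame precisely when its analysis operator is an isometry, an orthonormal system precisely when its synthesis operator (the adjoint) is an isometry, and an orthonormal basis precisely when its analysis operator is unitary. Since the analysis operator of a multiwindow super system is $\Gamma$-invariant and its adjoint intertwines $(\lambda_{\sigma}^{\Gamma})^{\oplus n}$ with $\pi^{\oplus d}$, each case of (2) produces exactly the corresponding $\Gamma$-invariant (co)isometry of (3); conversely, feeding such an operator or its adjoint through the dictionary returns a multiwindow super system of the required type. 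I would treat the three parenthetical cases in parallel, as they differ only in the direction of the isometry and, in the basis case, in additionally requiring surjectivity.

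The remaining content is (1)$\Rightarrow$(2), the usual renormalization, and this is where $\Gamma$-invariance is essential. Given a multiwindow super frame with analysis operator $C$ and frame operator $S = C^{*}C \in \B(\Hip^d)$, I would first observe that $S$ commutes with $\pi^{\oplus d}(\gamma)$ for every $\gamma$: from $C\pi^{\oplus d}(\gamma) = (\lambda_{\sigma}^{\Gamma})^{\oplus n}(\gamma)C$ one gets, upon taking adjoints and using that all the operators in sight are unitary, $C^{*}(\lambda_{\sigma}^{\Gamma})^{\oplus n}(\gamma) = \pi^{\oplus d}(\gamma)C^{*}$, and composing the two relations gives $S\pi^{\oplus d}(\gamma) = \pi^{\oplus d}(\gamma)S$. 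By the functional calculus $S^{-1/2}$ then also commutes with $\pi^{\oplus d}$, so the vectors of the canonical Parseval frame satisfy $S^{-1/2}\pi^{\oplus d}(\gamma)(\eta_{i,1},\dots,\eta_{i,d}) = \pi^{\oplus d}(\gamma)\,S^{-1/2}(\eta_{i,1},\dots,\eta_{i,d})$, i.e.\ the frame is again a multiwindow super system, now with generators $S^{-1/2}(\eta_{i,1},\dots,\eta_{i,d})$. The Riesz-sequence case is identical after replacing $S$ by the Gram operator $D^{*}D$ on $\ell^2(\Gamma)^n$ (where $D = C^{*}$ is the synthesis operator) and orthonormalizing via the isometry $D(D^{*}D)^{-1/2}$; the Riesz-basis case is the conjunction of the two.

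The step I expect to require the most care is exactly this commutation: everything hinges on $S^{-1/2}$ (respectively $(D^{*}D)^{-1/2}$) commuting with the diagonal representation, for only then does renormalization keep the family inside the class of multiwindow super systems rather than producing an arbitrary $\Gamma$-indexed family. The rest---boundedness of the operators under the Bessel hypothesis, and the routine identification $\ell^2(\Gamma \times \{1,\dots,n\}) \cong \ell^2(\Gamma)^n$---I would carry out but not belabor.
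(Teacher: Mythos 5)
Your proposal is correct and follows essentially the same route as the paper: you use the intertwining of the analysis operator to show the frame operator commutes with the diagonal representation, renormalize by $S^{-1/2}$ to pass from (1) to (2), and translate Parseval frames and orthonormal systems into $\Gamma$-invariant isometries, recovering generators from a given invariant operator via $T^{*}\delta_{(e,i)}$ exactly as the paper does with $Pe_i$. Your treatment of the Riesz case via the polar isometry $D(D^{*}D)^{-1/2}$ is only a cosmetic variant of the paper's restriction of $S$ to the closed linear span.
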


\begin{proof}
Let $(\eta_{i,j})_{i,j=1}^{n,d}$ be the generators of an $n$-multiwindow $d$-super frame associated to $(\pi,\Gamma)$. Let $C$ be the associated analysis operator. Then $C\pi(\gamma) = \lambda_{\sigma}(\gamma)C$ for all $\gamma \in \Gamma$. Consequently, the frame operator $S = C^*C$ commutes with $\pi(\gamma)$ for $\gamma \in \Gamma$, so the elements of the associated Parseval frame are of the form
\[  S^{-1/2} ( \pi(\gamma) \eta_{i,1}, \ldots, \pi(\gamma) \eta_{i,d} ) = ( \pi(\gamma) S^{-1/2}\eta_{i,1}, \ldots, \pi(\gamma) S^{-1/2} \eta_{i,d} ) \]
for $\gamma \in \Gamma$ and $1 \leq i \leq n$. In other words, the matrix of vectors $( S^{-1/2} \eta_{i,j} )_{i,j=1}^{n,d}$ generates a $n$-multiwindow $d$-super Parseval frame.

If $(\eta_{i,j})_{i,j=1}^{n,d}$ are the generators of a $n$-multiwindow $d$-super Parseval frame then the coefficient operator $C$ is a $\Gamma$-invariant isometry $\Hip^d \to \ell^2(\Gamma)^n$. Conversely, suppose $C \colon \Hip^d \to \ell^2(\Gamma)^n$ is a $\Gamma$-invariant isometry. Let $e_i$ be the vector $(0, \ldots, \delta_0, \ldots, 0) \in \ell^2(\Gamma)^n$ where $\delta_0$ is in the $i$th position. Then $\{ \lambda_{\sigma}^G(\gamma) e_i : \gamma \in \Gamma, 1 \leq i \leq n \}$ is an orthonormal basis for $\ell^2(\Gamma)^n$. Consequently, if $P$ denotes the projection of $\ell^2(\Gamma)^n$ onto $C(\Hip^d)$, then the vectors $P \lambda_{\sigma}^G(\gamma) e_i = \pi(\gamma) P e_i$ for $\gamma \in \Gamma$ and $1 \leq i \leq n$ form a Parseval frame for $\Hip^d$. Letting $P e_i = (\eta_{i,1} ,\ldots, \eta_{i,d})$, the matrix $(\eta_{i,j})_{i,j=1}^{n,d}$ generates a $n$-multiwindow $d$-super Parseval frame.

The arguments for Riesz sequences are similar. If $(\eta_{i,j})_{i,j=1}^{n,d}$ are the generators of an $n$-multiwindow $d$-super Riesz sequence then the frame operator $S$ restricted to the closed linear span of the Riesz sequence is $\Gamma$-invariant, so the vectors $(S^{-1/2} \eta_{i,j})_{i,j=1}^{n,d}$ are the generators of an $n$-multiwindow $d$-super orthonormal family. The corresponding analysis operator is a $\Gamma$-invariant isometry $\ell^2(\Gamma)^n \to \Hip^d$. Conversely, for a $\Gamma$-invariant isometry $D \colon \ell^2(\Gamma)^n \to \Hip^d$, the vectors $(\eta_{i,j})_{i,j=1}^{n,d}$ are the generators of an $n$-multiwindow $d$-super orthonormal family, where $D \delta_i = (\eta_{i,1}, \ldots, \eta_{i,d})$ for $1 \leq i \leq n$.
\end{proof}

We will also need the following generalization of \cite[Proposition 7.6]{VaRo00}. The strategy of the proof is the same.

\begin{proposition}\label{prop:frame_crit_basis}
Suppose that $(\pi,\Gamma)$ admits an $n$-multiwindow $d$-super frame. If $d_{\pi} \vol(G/\Gamma) = n/d$, then $(\pi,\Gamma)$ admits an $n$-multiwindow $d$-super Riesz basis.
\end{proposition}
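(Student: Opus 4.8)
The plan is to pass from families of vectors to maps of Hilbert $M$-modules via \Cref{prop:frame_riesz_isometries}, and then use the center-valued dimension as a complete invariant to upgrade a submodule embedding to an isomorphism. Here $M = \vN(\Gamma,\sigma)$, $\Hip$ carries the module structure built in \Cref{sec:dimension}, and $\ell^2(\Gamma) \cong L^2(M,\tau)$ is the standard module. I regard $\Hip^d$ and $\ell^2(\Gamma)^n$ as separable Hilbert $M$-modules (the lattice $\Gamma$ is countable, so $M$ has separable predual), noting that under these structures $\Gamma$-invariant maps are precisely the $M$-linear ones, since both actions are generated by $\lambda_{\sigma}(\Gamma)$ and $\lambda_{\sigma}(\gamma)$ acts on $\Hip$ as $\pi(\gamma)$. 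By \Cref{prop:frame_riesz_isometries}, the hypothesis says that $\Hip^d$ is isomorphic to a submodule of $\ell^2(\Gamma)^n$, while the desired conclusion is exactly that $\Hip^d \cong \ell^2(\Gamma)^n$. So the whole task reduces to turning an embedding into an isomorphism.

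First I would compute the relevant dimensions. By \Cref{thm:cvfd} and additivity (\Cref{prop:cdim_iso}(3)) one has $\dim_M \Hip^d = d\, d_{\pi}\vol(G/\Gamma)$, which equals $n$ precisely by the hypothesis $d_{\pi}\vol(G/\Gamma) = n/d$. On the target side, the standard module corresponds to a projection with $p_{11}=1$ and $p_{ii}=0$ for $i>1$, so $\cdim_M L^2(M,\tau) = \Tr(1) = I$; hence $\cdim_M \ell^2(\Gamma)^n = nI$ and in particular $\dim_M \ell^2(\Gamma)^n = n$. Both modules therefore have finite scalar dimension $n$.

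Next I would combine the inequality from the embedding with the faithfulness of the trace. The embedding $\Hip^d \hookrightarrow \ell^2(\Gamma)^n$ together with \Cref{prop:cdim_iso}(1) gives $\cdim_M \Hip^d \leq \cdim_M \ell^2(\Gamma)^n = nI$ in $\mathcal{Z}(M)^+$. Setting $a = nI - \cdim_M \Hip^d \geq 0$, I apply $\tau$: using \eqref{eq:cvt_t} applied to $N = \rvN(\Gamma,\overline{\sigma})$, which yields $\tau(\cdim_M \Hi) = \dim_M \Hi$ for any finite module (sum over $i$ of $\tau(\Tr(p_{ii})) = \tau(p_{ii})$, justified by normality of $\tau$), I obtain $\tau(a) = n - \dim_M \Hip^d = 0$. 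Since $\tau$ is faithful on the center, $a = 0$, so $\cdim_M \Hip^d = nI = \cdim_M \ell^2(\Gamma)^n$. As both scalar dimensions are finite, \Cref{prop:cdim_iso}(2) then forces $\Hip^d \cong \ell^2(\Gamma)^n$, i.e.\ there is a $\Gamma$-invariant unitary $\Hip^d \to \ell^2(\Gamma)^n$, which by \Cref{prop:frame_riesz_isometries} is exactly an $n$-multiwindow $d$-super Riesz basis.

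The only place where any difficulty enters — and the sole point at which the possible non-factoriality of $M$ matters — is the last step: equality of the \emph{scalar} dimensions alone does not force two modules to be isomorphic, so one genuinely needs the center-valued dimension. The embedding supplies the one-sided inequality $\cdim_M \Hip^d \leq nI$ in $\mathcal{Z}(M)$, and faithfulness of $\tau$ on the center is precisely what promotes the matching scalar dimensions to center-valued equality. Everything else is bookkeeping through \Cref{prop:frame_riesz_isometries,thm:cvfd,prop:cdim_iso}.
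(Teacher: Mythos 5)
Your argument is correct, but it takes a genuinely different route from the paper. The paper proves this proposition by a direct computation in the spirit of \cite[Proposition 7.6]{VaRo00}: starting from a Parseval multiwindow super frame with generators $(\eta_{i,j})$, it integrates the Parseval identity over a fundamental domain, invokes the orthogonality relations \eqref{eq:schur} to obtain $\sum_{i,j}\|\eta_{i,j}\|^2 = d\,d_{\pi}\vol(G/\Gamma) = n$, and then uses the elementary fact that each Parseval frame vector has norm at most $1$ to force every generator to be a unit vector, whence the Parseval frame is an orthonormal basis. Your proof instead runs entirely through the dimension theory of \Cref{sec:von_neumann,sec:dimension}: the embedding $\Hip^d\hookrightarrow\ell^2(\Gamma)^n$ gives $\cdim_M\Hip^d\leq nI$, the hypothesis gives equality of the scalar dimensions, and faithfulness of $\tau$ on the center upgrades the one-sided center-valued inequality to the equality $\cdim_M\Hip^d=nI$, after which \Cref{prop:cdim_iso}(2) and \Cref{prop:frame_riesz_isometries} finish the job. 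All the steps check out (in particular the identity $\tau(\cdim_M\Hi)=\dim_M\Hi$ via \eqref{eq:cvt_t} and normality, and there is no circularity, since none of \Cref{thm:cvfd}, \Cref{prop:cdim_iso}, or \Cref{prop:frame_riesz_isometries} depends on this proposition). The trade-off: the paper's argument is elementary and self-contained, needing only the orthogonality relations and no von Neumann algebra machinery, whereas yours is more conceptual, makes transparent exactly where non-factoriality could have been an obstruction and why it is not, and in fact essentially subsumes part (3) of \Cref{thm:dens_conv} directly. One small presentational point: you should say explicitly that a bounded $\Gamma$-invariant map automatically intertwines all of $M=\lambda_{\sigma}(\Gamma)''$ (by weak-operator continuity of the commutation relation), which is what licenses the passage from $\Gamma$-invariant isometries to $M$-module embeddings; the paper glosses over the same point in the proof of \Cref{thm:dens_conv}.
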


\begin{proof}
By \Cref{prop:frame_riesz_isometries}, we can assume that there exist $(\eta_{i,j})_{i,j=1}^{n,d}$ that are generators of an $n$-multiwindow $d$-super Parseval frame. Let $B$ be a fundamental domain for $\Gamma$ in $G$, so that $\{ B \gamma : \gamma \in \Gamma \}$ is a partition of $G$. Then for any $y \in B$ and $(\xi_j)_{j=1}^d \in \Hip^d$, we have that
\begin{align*}
    \sum_{i=1}^n \sum_{\gamma \in \Gamma} | \langle (\xi_j)_{i=1}^n, (\pi(y \gamma) \eta_{i,j})_{j=1}^d \rangle |^2 &= \sum_{i=1}^n \sum_{\gamma \in \Gamma} | \langle (\pi(y)^*\xi_j)_{j=1}^d, (\pi(\gamma) \eta_{i,j})_{j=1}^d \rangle |^2 \\
    &= \| (\pi(y)^* \xi_j)_j \|^2 = \| (\xi_j)_j \|^2 .
\end{align*}
Integrating this equality over $y \in B$ and using the orthogonality relation in \Cref{prop:si_unimodular}, we have that
\begin{align*}
    \| (\xi_j)_j \|^2 \vol(G/\Gamma) &= \int_B \| (\xi_j)_j \|^2 \dif{x} \\
    &= \sum_{i=1}^n \int_B \sum_{\gamma \in \Gamma} | \langle (\xi_j)_j, (\pi(y \gamma) \eta_{i,j})_j \rangle |^2 \dif{x} \\
    &= \sum_{i=1}^n \int_G | \langle (\xi_j)_j, (\pi(x) \eta_{i,j})_j \rangle |^2 \dif{x} \\
    &= \sum_{i=1}^n \int_G \sum_{j,j' = 1}^d \langle \xi_j, \langle \pi(x) \eta_{i,j'} \rangle \overline{ \langle \xi_{j'}, \pi(x) \eta_{i,j} \rangle } \dif{x} \\
    &= d_{\pi}^{-1} \sum_{i=1}^n \sum_{j,j'=1}^d \langle \xi_j, \xi_{j'} \rangle \overline{ \langle \pi(x) \eta_{i,j'}, \pi(x) \eta_{i,j} \rangle } .
\end{align*}
Picking $\xi_1, \ldots, \xi_n$ so that $\langle \xi_j, \xi_{j'} \rangle = \delta_{j,j'}$ for $1 \leq j,j' \leq d$, we get
\[ d \vol(G/\Gamma) = \| (\xi_j)_j \|^2 \vol(G/\Gamma) = d_{\pi}^{-1} \sum_{i=1}^n \sum_{j=1}^d \| \pi(x) \eta_{i,j} \|^2 = d_{\pi}^{-1} \sum_{i=1}^n \sum_{j=1}^d \| \eta_{i,j} \|^2 . \]
Since we assume that $d_{\pi} \vol(G/\Gamma) = n/d$, we get
\begin{equation}
    \sum_{i=1}^n \sum_{j=1}^d \| \eta_{i,j} \|^2 = n . \label{eq:proof_dens}
\end{equation}
For each $1 \leq i \leq n$, the vector $(\eta_{i,j})_{j=1}^d \in \Hip^d$ is a member of a Parseval frame, hence $\| (\eta_{i,j})_j \|^2 = \sum_{j=1}^d \| \eta_{i,j} \|^2 \leq 1$. Combining this with \eqref{eq:proof_dens}, we must have $\| (\eta_{i,j})_j \| = 1$ for each $1 \leq i \leq n$. But then every vector in the Parseval frame generated by $(\eta_{i,j})_{i,j=1}^{n,d}$ is a unit vector, so the Parseval frame must be an orthonormal basis.
\end{proof}

\subsection{The density theorem and converses}

Using the results on the center-valued von Neumann dimension of $\Hip$ as a Hilbert module over the $\sigma$-twisted group von Neumann algebra $\vN(\Gamma,\sigma)$, we can now characterize the existence of $n$-multiwindow $d$-super frames in terms of the function $\phi$ from \Cref{thm:cvfd}.

\begin{theorem}\label{thm:dens_conv}
Let $G$ be a second-countable, unimodular, locally compact group, let $\sigma$ be a 2-cocycle on $G$, and let $(\pi, \Hip)$ be a $\sigma$-projective, irreducible, square-integrable, unitary representation of $G$. Let $\Gamma$ be a lattice in $G$. Let $\phi$ be as in \Cref{thm:cvfd}. Then the following hold:
\begin{enumerate}
    \item $(\pi,\Gamma)$ admits an $n$-multiwindow $d$-super frame if and only if $ (n/d)\delta_e - \phi$ is a $\sigma$-positive definite function.
    \item $(\pi,\Gamma)$ admits an $n$-multiwindow $d$-super Riesz sequence if and only if $ \phi - (n/d) \delta_e$ is a $\sigma$-positive definite function.
    \item $(\pi,\Gamma)$ admits an $n$-multiwindow $d$-super Riesz basis if and only if $\phi = (n/d) \delta_e$.
\end{enumerate}
\end{theorem}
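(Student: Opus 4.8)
The plan is to convert each of the three existence statements into a comparison of center-valued von Neumann dimensions over $M = \vN(\Gamma,\sigma)$ and then read off the corresponding positivity condition on $\phi$. By \Cref{prop:frame_riesz_isometries}, $(\pi,\Gamma)$ admits an $n$-multiwindow $d$-super frame, Riesz sequence, or Riesz basis precisely when there exists, respectively, a $\Gamma$-invariant isometry $\Hip^d \to \ell^2(\Gamma)^n$, a $\Gamma$-invariant isometry $\ell^2(\Gamma)^n \to \Hip^d$, or a $\Gamma$-invariant unitary $\Hip^d \to \ell^2(\Gamma)^n$. The first step is to note that any such $\Gamma$-invariant map is automatically $M$-linear: it intertwines the generators $\lambda_{\sigma}(\gamma)$ of $M$, which act as $\pi(\gamma)$ on $\Hip$ and as $\lambda_{\sigma}(\gamma)$ on $\ell^2(\Gamma)$, and since both module actions are normal the intertwining extends from $\lambda_{\sigma}(\Gamma)$ to its weak-$*$ closure $M$. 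Thus the three conditions are equivalent to $\Hip^d$ being isomorphic to an $M$-submodule of $\ell^2(\Gamma)^n$, to $\ell^2(\Gamma)^n$ being isomorphic to an $M$-submodule of $\Hip^d$, and to $\Hip^d \cong \ell^2(\Gamma)^n$, respectively.

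Next I would compute the two dimensions involved. The standard module $\ell^2(\Gamma) = L^2(M,\tau)$ has $\cdim_M \ell^2(\Gamma) = I$ and $\dim_M \ell^2(\Gamma) = 1$, so additivity (\Cref{prop:cdim_iso}(3)) gives $\cdim_M \ell^2(\Gamma)^n = nI$ and $\dim_M \ell^2(\Gamma)^n = n < \infty$. On the other side, \Cref{thm:cvfd} gives $\dim_M \Hip = d_{\pi}\vol(G/\Gamma) < \infty$ and identifies $\cdim_M \Hip$ with the possibly unbounded twisted-convolution operator $f \mapsto f *_{\sigma} \phi$ on $\ell^2(\Gamma)$; additivity then yields $\cdim_M \Hip^d = d\,\cdim_M\Hip$ and $\dim_M \Hip^d = d\,d_{\pi}\vol(G/\Gamma) < \infty$. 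Since all four scalar dimensions are finite, \Cref{prop:cdim_iso}(2) applies and converts the three submodule comparisons into the operator relations $(n/d)I - \cdim_M\Hip \geq 0$, $\cdim_M\Hip - (n/d)I \geq 0$, and $\cdim_M\Hip = (n/d)I$. Recalling that $I$ is twisted convolution by $\delta_e$, these say exactly that $f \mapsto f *_{\sigma}\big((n/d)\delta_e - \phi\big)$ is a positive operator, that $f \mapsto f *_{\sigma}\big(\phi - (n/d)\delta_e\big)$ is a positive operator, and that $\phi = (n/d)\delta_e$.

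The final step translates these operator relations into $\sigma$-positive definiteness via \Cref{prop:pos_def}, whose first part characterizes $\sigma$-positive definiteness of $\psi \in \ell^\infty(\Gamma)$ through nonnegativity of $f \mapsto \langle \psi *_{\sigma} f, f\rangle$ on finitely supported $f$. For (1), positivity of $(n/d)I - \cdim_M\Hip$ tested against finitely supported $f$ is precisely the $\sigma$-positive definiteness of $(n/d)\delta_e - \phi$; part (2) is the mirror image with $\phi - (n/d)\delta_e$. For (3) one uses the equality $\cdim_M\Hip = (n/d)I$ directly, or observes that a function for which both $\psi$ and $-\psi$ are $\sigma$-positive definite must vanish, so (3) is the conjunction of (1) and (2).

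The step I expect to be the main obstacle is the careful handling of the possible unboundedness of $\cdim_M\Hip$: both the dimension comparison in \Cref{prop:cdim_iso}(2) and the passage to $\sigma$-positive definiteness must be interpreted for a positive operator affiliated to the abelian algebra $\mathcal{Z}(M)$ rather than for a bounded element, which is exactly why the finitely supported form criterion of \Cref{prop:pos_def} is used in place of its bounded-operator counterpart. In the frame case the inequality $\cdim_M\Hip \leq (n/d)I$ forces $\cdim_M\Hip$ to be bounded, so the delicate direction is the converse: recovering the module embedding, and hence the frame, from $\sigma$-positive definiteness of $(n/d)\delta_e - \phi$ when the convolution operator may be unbounded.
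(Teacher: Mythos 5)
Your proposal is correct and, for parts (1) and (2), follows essentially the same route as the paper: reduce to $\Gamma$-invariant isometries via \Cref{prop:frame_riesz_isometries}, compare $\cdim_M \Hip^d = d\,\cdim_M\Hip$ with $\cdim_M \ell^2(\Gamma)^n = nI$ using \Cref{prop:cdim_iso}, and translate the resulting operator inequality into $\sigma$-positive definiteness via \Cref{prop:pos_def}; your remarks about the unbounded case and the finitely-supported test functions match the paper's (equally informal) handling. The one genuine divergence is part (3). The paper obtains the "if" direction of (3) not from dimension theory alone but by invoking \Cref{prop:frame_crit_basis}: from $\phi = (n/d)\delta_e$ it deduces $d_\pi\vol(G/\Gamma) = n/d$, gets a frame from part (1), and upgrades it to a Riesz basis at critical density. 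Your primary route — applying the equality case of \Cref{prop:cdim_iso}(2) to get $\Hip^d \cong \ell^2(\Gamma)^n$ as $M$-modules, hence a $\Gamma$-invariant unitary and a Riesz basis via \Cref{prop:frame_riesz_isometries}(3) — is valid and arguably cleaner, since it bypasses \Cref{prop:frame_crit_basis} entirely. Your fallback phrasing, however, that "(3) is the conjunction of (1) and (2)," is only sound in the "only if" direction: a Riesz basis is simultaneously a frame and a Riesz sequence, so both positivity conditions hold and $\phi = (n/d)\delta_e$; but in the converse direction the existence of a frame and of a Riesz sequence with possibly different generators does not by itself produce a single system that is both, which is precisely why either the module-isomorphism argument or \Cref{prop:frame_crit_basis} is needed there.
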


\begin{proof}
Set $M = \vN(\Gamma,\sigma)$. By \Cref{prop:frame_riesz_isometries}, $(\pi,\Gamma)$ admits an $n$-multiwindow $d$-super frame if and only if there is an $\pi|_{\Gamma}$-invariant isometry $\Hip^d \to \ell^2(\Gamma)^n$. This is the case if and only if $_{M} \Hip^d$ is a submodule of $_{M} \ell^2(\Gamma)^n$. By \Cref{prop:cdim_iso} this is the case if and only if
\[ d \cdim_M \Hip = \cdim_{M} \Hip^d \leq \cdim_M \ell^2(\Gamma)^n = n I .\]
By \Cref{thm:cvfd} the center-valued von Neumann dimension $T \coloneqq \cdim_M \Hip$ is given by convolution with the function $\phi$, hence determined by the values $\phi(\gamma) = \tau( \rho_{\overline{\sigma}}(\gamma) T)$ for $\gamma \in \Gamma$. The condition $T \leq (n/d)I$ is equivalent to $\langle ((n/d)\delta_e - \phi)*_{\sigma} f, f \rangle = \langle ((n/d)I-T)f, f \rangle \geq 0$ for all finitely supported functions $f$ on $\Gamma$. By \Cref{prop:pos_def} this happens exactly when $(n/d)\delta_e - \phi$ is $\sigma$-positive definite.

Using \Cref{prop:frame_riesz_isometries} in a similar manner shows that $(\pi,\Gamma)$ admits an $n$-multiwindow $d$-super Riesz sequence if and only if $\phi - (n/d)\delta_e$ is $\sigma$-positive definite. Combining the statements for frames and Riesz sequences, we see that if $(\pi,\Gamma)$ admits an $n$-multiwindow $d$-super Riesz basis, then $\phi = (n/d) \delta_e$. Conversely, suppose $\phi = (n/d) \delta_e$. Then in particular, $d_{\pi} \vol(G/\Gamma) = n/d$. Since $\phi$ is $\sigma$-positive definite, $(\pi,\Gamma)$ admits an $n$-multiwindow $d$-super frame by what we already proved. By \Cref{prop:frame_crit_basis}, $(\pi,\Gamma)$ then admits an $n$-multiwindow $d$-super Riesz basis.
\end{proof}

As a corollary we get the following generalization of the density theorem from \cite{VaRo00} to $n$-multiwindow $d$-super systems:

\begin{theorem}
Let $G$ be a second-countable, unimodular, locally compact group, let $\sigma$ be a 2-cocycle on $G$, and let $(\pi, \Hip)$ be a $\sigma$-projective, irreducible, square-integrable, unitary representation of $G$. Let $\Gamma$ be a lattice in $G$. Then the following hold:
\begin{enumerate}
    \item If $(\pi,\Gamma)$ admits an $n$-multiwindow $d$-super frame, then $d_{\pi} \vol(G/\Gamma) \leq n/d$.
    \item If $(\pi,\Gamma)$ admits an $n$-multiwindow $d$-super Riesz sequence, then $d_{\pi} \vol(G/\Gamma) \geq n/d$.
    \item If $(\pi,\Gamma)$ admits an $n$-multiwindow $d$-super Riesz basis, then $d_{\pi} \vol(G/\Gamma) = n/d$.
\end{enumerate}
\end{theorem}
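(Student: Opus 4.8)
The plan is to read each inequality off the positivity characterizations already established in \Cref{thm:dens_conv}, using only the fact that a $\sigma$-positive definite function is nonnegative at the identity. First I would record the elementary observation that if $\psi \in \ell^\infty(\Gamma)$ is $\sigma$-positive definite, then $\psi(e) \geq 0$. This is immediate from the defining inequality $\sum_{i,j}\sigma(\gamma_j\gamma_i^{-1},\gamma_i)c_i\overline{c_j}\psi(\gamma_j\gamma_i^{-1}) \geq 0$ by taking a single index with $\gamma_1 = e$ and $c_1 = 1$, since $\sigma(e,e)=1$; equivalently, one tests the operator inequality of \Cref{prop:pos_def} against $f = \delta_e$.

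Next I would identify $\phi(e)$. Evaluating the formula of \Cref{thm:cvfd} at $\gamma = e$, whose conjugacy class $\{e\}$ is always finite and $\sigma$-regular with centralizer all of $\Gamma$, and using $\sigma(e,y)=\sigma(y,e)=1$ together with $V_{\eta}\eta(e) = \langle \eta,\eta\rangle = 1$, gives $\phi(e) = d_{\pi} \vol(G/\Gamma)$. This is just the scalar-valued dimension $\dim_{\vN(\Gamma,\sigma)}\Hip$ already computed in \Cref{thm:cvfd}.

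With these two facts the three assertions fall out. For (1), if $(\pi,\Gamma)$ admits an $n$-multiwindow $d$-super frame then \Cref{thm:dens_conv}(1) makes $(n/d)\delta_e - \phi$ a $\sigma$-positive definite function; evaluating at $e$ yields $n/d - \phi(e) \geq 0$, i.e.\ $d_{\pi}\vol(G/\Gamma) \leq n/d$. For (2), \Cref{thm:dens_conv}(2) makes $\phi - (n/d)\delta_e$ $\sigma$-positive definite, and the same evaluation gives $\phi(e) - n/d \geq 0$, i.e.\ $d_{\pi}\vol(G/\Gamma) \geq n/d$. Part (3) follows by combining (1) and (2), since a Riesz basis is simultaneously a frame and a Riesz sequence; alternatively it is read directly off \Cref{thm:dens_conv}(3), where $\phi = (n/d)\delta_e$ forces $\phi(e) = n/d$.

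There is essentially no hard step: the content has been front-loaded into \Cref{thm:cvfd} and \Cref{thm:dens_conv}, and the only point requiring care is the first observation, namely that the global positivity of $(n/d)\delta_e - \phi$ can be localized to its value at the identity, which is what converts an operator-level statement into a scalar density inequality. One could instead bypass \Cref{thm:dens_conv} and argue at the level of dimensions: the isometries furnished by \Cref{prop:frame_riesz_isometries} give module inclusions, \Cref{prop:cdim_iso} turns these into inequalities between center-valued dimensions (namely $d\,\cdim_M\Hip \leq nI$ in the frame case and $nI \leq d\,\cdim_M\Hip$ in the Riesz case), and applying the trace $\tau$, using $\tau\circ\Tr = \tau$ from \eqref{eq:cvt_t} to collapse $\tau(\cdim_M\Hip)$ to $\dim_M\Hip = d_{\pi}\vol(G/\Gamma)$, yields the scalar inequalities directly. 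This second route is the one closest in spirit to the classical density theorem.
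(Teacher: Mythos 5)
Your proposal is correct, and your second route is exactly the paper's own one-line argument: apply $\tau$ to the conclusion of \Cref{thm:dens_conv} (equivalently, to the module inequalities) and use $d_{\pi}\vol(G/\Gamma) = \dim_M \Hip = \tau(\cdim_M\Hip)$. Your first route is the same argument in Fourier-coefficient language, since evaluating $\phi$ at $e$ is precisely computing $\tau(\cdim_M\Hip)$, so there is no substantive difference from the paper.
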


\begin{proof}
Follows from \Cref{thm:dens_conv} and the fact that $d_{\pi} \vol(G/\Gamma) = \dim_M \Hip = \tau(\cdim_M \Hip)$.
\end{proof}

Combining \Cref{cor:abelian_cdim} and \Cref{thm:dens_conv} we get another immediate corollary, which gives a complete converse to the density theorem when $G$ is abelian and $(G,\sigma)$ satisfies Kleppner's condition.

\begin{theorem}\label{thm:dens_conv_abelian}
Let $G$ be a second-countable, abelian, locally compact group, let $\sigma$ be a 2-cocycle on $G$, and let $(\pi, \Hip)$ be a $\sigma$-projective, irreducible, square-integrable, unitary representation of $G$. Suppose that $(G,\sigma)$ satisfies Kleppner's condition. Let $\Gamma$ be a lattice in $G$. Then the following hold:
\begin{enumerate}
    \item $(\pi,\Gamma)$ admits an $n$-multiwindow $d$-super frame if and only if $d_{\pi} \vol(G/\Gamma) \leq n/d$.
    \item $(\pi,\Gamma)$ admits an $n$-multiwindow $d$-super Riesz sequence if and only if $d_{\pi} \vol(G/\Gamma) \geq n/d$.
    \item $(\pi,\Gamma)$ admits an $n$-multiwindow $d$-super Riesz basis if and only if $d_{\pi} \vol(G/\Gamma) = n/d$.
\end{enumerate}
\end{theorem}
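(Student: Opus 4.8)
The plan is to read this off by combining the collapse of the center-valued dimension in \Cref{cor:abelian_cdim} with the positivity characterization of \Cref{thm:dens_conv}, thereby reducing each equivalence to the elementary fact that a scalar multiple of $\delta_e$ is $\sigma$-positive definite exactly when the scalar is nonnegative.

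First I would apply \Cref{cor:abelian_cdim}: since $G$ is abelian and $(G,\sigma)$ satisfies Kleppner's condition, the center-valued von Neumann dimension of $\Hip$ over $\vN(\Gamma,\sigma)$ is the scalar operator $d_{\pi}\vol(G/\Gamma)I$. By \Cref{thm:cvfd} this operator is $\sigma$-twisted convolution by the function $\phi$, and since $\delta_e$ is the unit for $*_{\sigma}$ (a consequence of $\sigma(e,\cdot)=\sigma(\cdot,e)=1$, which follows from the cocycle identity \eqref{eq:cocycle1}), the collapse says precisely that $\phi = d_{\pi}\vol(G/\Gamma)\,\delta_e$.

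Next I would substitute this $\phi$ into the three equivalences of \Cref{thm:dens_conv}. Part (1) becomes the $\sigma$-positive definiteness of $\big((n/d)-d_{\pi}\vol(G/\Gamma)\big)\delta_e$, part (2) that of $\big(d_{\pi}\vol(G/\Gamma)-(n/d)\big)\delta_e$, and part (3) the identity $d_{\pi}\vol(G/\Gamma)=n/d$. The single remaining ingredient is that $c\,\delta_e$ is $\sigma$-positive definite if and only if $c\geq 0$; I would verify this directly from the definition, observing that substituting $\phi=c\delta_e$ kills all off-diagonal terms in $\sum_{i,j}\sigma(\gamma_j\gamma_i^{-1},\gamma_i)c_i\overline{c_j}\phi(\gamma_j\gamma_i^{-1})$ and leaves $c\sum_i|c_i|^2$, or alternatively by noting that $c\delta_e$ is the Fourier coefficient of $cI$ and invoking \Cref{prop:pos_def}. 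Feeding this back yields the three stated equivalences.

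I do not expect a genuine obstacle here, as the statement is essentially a bookkeeping combination of two results already in hand; the only points demanding care are the correct identification $\phi=d_{\pi}\vol(G/\Gamma)\delta_e$ and keeping track of the sign of the scalar in each of the three positivity conditions.
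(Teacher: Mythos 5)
Your proposal is correct and follows exactly the route the paper takes: the paper presents this theorem as an immediate corollary of \Cref{cor:abelian_cdim} and \Cref{thm:dens_conv}, and your filling-in of the details (identifying $\phi = d_{\pi}\vol(G/\Gamma)\delta_e$ and checking that $c\,\delta_e$ is $\sigma$-positive definite iff $c \geq 0$, using $\sigma(e,\cdot)=1$) is the intended bookkeeping.
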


\subsection{Gabor analysis}\label{subsec:gabor}

We end with an application to Gabor analysis on locally compact abelian groups \cite{Gr98}. Let $A$ be a second-countable, locally compact abelian group with Pontryagin dual $\widehat{A}$ and set $G = A \times \widehat{A}$. The \emph{Weyl--Heisenberg 2-cocycle} of $G$ is given by
\[ \sigma( (x,\omega), (x',\omega')) = \overline{ \omega'(x) } \;\;\; \text{for $(x,\omega),(x',\omega') \in G$.} \]
Note that $(G,\sigma)$ satisfies Kleppner's condition: Indeed, suppose $(x,\omega) \in G$ is such that $\sigma((x,\omega),(x',\omega')) = \sigma((x',\omega'),(x,\omega))$ for all $(x',\omega') \in G$. Then $\omega'(x) = \omega(x')$ for all $x' \in A$ and $\omega' \in \widehat{A}$. Setting $\omega' = 1$ gives that $\omega$ is the trivial character and setting $x' = 1$ gives that $\omega'(x) = 1$ for all $\omega' \in \widehat{A}$ which implies that $x = e$ by Pontryagin duality.

The \emph{Weyl--Heisenberg representation} is the square-integrable, irreducible, $\sigma$-projective representation of $G$ on $L^2(A)$ given by
\[ \pi(x,\omega) \xi(t) = \omega(t) \xi(x^{-1}t) \;\;\; \text{for $(x,\omega) \in G$, $\xi \in L^2(A)$ and $t \in A$.} \]
The orthogonality relations for the short-time Fourier transform (\cite{Gr98}) yield that $d_{\pi} = 1$. In this setting a system of the form $\pi(\Gamma) \eta$ for some $\eta \in L^2(A)$ and $\Gamma$ a lattice in $G = A \times \widehat{A}$ is called a \emph{Gabor system}. If it has the frame property in $L^2(A)$ we call it a \emph{Gabor frame}, and similarly for Riesz sequences and Riesz bases. We also speak of $n$-multiwindow $d$-super Gabor frames and Gabor Riesz bases where the definitions are according to \Cref{subsec:multi_super}. The following theorem is an immediate consequence of \Cref{thm:dens_conv_abelian}.

\begin{theorem}\label{thm:gabor_existence}
Let $A$ be a second-countable, locally compact abelian group, and let $\Gamma$ be a lattice in $A \times \widehat{A}$. Then the following hold:
\begin{enumerate}
    \item There exists an $n$-multiwindow $d$-super Gabor frame over $\Gamma$ if and only if $\vol(G/\Gamma) \leq n/d$.
    \item There exists an $n$-multiwindow $d$-super Gabor Riesz sequence over $\Gamma$ if and only if $\vol(G/\Gamma) \geq n/d$.
    \item There exists an $n$-multiwindow $d$-super Gabor Riesz basis over $\Gamma$ if and only if $\vol(G/\Gamma) = n/d$.
\end{enumerate}
\end{theorem}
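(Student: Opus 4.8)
The plan is to obtain \Cref{thm:gabor_existence} as a direct specialization of \Cref{thm:dens_conv_abelian}, so the work consists entirely of checking that the Weyl--Heisenberg data $(G,\sigma,\pi)$ satisfy the hypotheses of that theorem and then substituting the value of the formal dimension. First I would verify the structural assumptions on $G = A \times \widehat{A}$: since $A$ is a second-countable locally compact abelian group, its Pontryagin dual $\widehat{A}$ is again second-countable, locally compact, and abelian, and hence so is the product $G = A \times \widehat{A}$. Thus $G$ meets the standing assumptions (second-countable, abelian, locally compact) required by \Cref{thm:dens_conv_abelian}, and $\Gamma$ is a lattice in $G$ by hypothesis.

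Next I would collect the representation-theoretic hypotheses, all of which are recorded in the discussion preceding the statement. The Weyl--Heisenberg 2-cocycle $\sigma$ is a 2-cocycle on $G$, the pair $(G,\sigma)$ satisfies Kleppner's condition (established above via Pontryagin duality), and the Weyl--Heisenberg representation $\pi$ on $L^2(A)$ is $\sigma$-projective, irreducible, and square-integrable. Consequently $(\pi,L^2(A))$ is exactly a representation of the kind to which \Cref{thm:dens_conv_abelian} applies, with $\Hip = L^2(A)$.

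The final step is to invoke \Cref{thm:dens_conv_abelian} and simplify using $d_{\pi} = 1$, which is supplied by the orthogonality relations for the short-time Fourier transform. Substituting $d_{\pi} = 1$ turns the conditions $d_{\pi}\vol(G/\Gamma) \leq n/d$, $d_{\pi}\vol(G/\Gamma) \geq n/d$, and $d_{\pi}\vol(G/\Gamma) = n/d$ into $\vol(G/\Gamma) \leq n/d$, $\vol(G/\Gamma) \geq n/d$, and $\vol(G/\Gamma) = n/d$ respectively, which are precisely the three assertions for $n$-multiwindow $d$-super Gabor frames, Riesz sequences, and Riesz bases in the sense of \Cref{subsec:multi_super}.

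Because every hypothesis has already been verified in the surrounding text, I do not expect a genuine obstacle here; the only point requiring a moment's care is the purely structural claim that second-countability passes to $\widehat{A}$ and hence to $G = A \times \widehat{A}$, which is standard in the theory of locally compact abelian groups. The substance of the result lies entirely in \Cref{thm:dens_conv_abelian} and, through it, in the collapse of $\cdim_{\vN(\Gamma,\sigma)} \Hip$ to a scalar in \Cref{cor:abelian_cdim}.
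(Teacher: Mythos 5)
Your proposal is correct and matches the paper's own argument: the paper states the result as an immediate consequence of \Cref{thm:dens_conv_abelian}, using exactly the facts you cite (Kleppner's condition for $(G,\sigma)$ via Pontryagin duality, irreducibility and square-integrability of the Weyl--Heisenberg representation, and $d_{\pi}=1$ from the orthogonality relations). No further comment is needed.
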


The above theorem applies e.g.\ to the case where $A$ is the adele group of a global field which was studied in \cite{EnJaLu19,EnJaLu20}.

\printbibliography

\end{document}